\newcommand{\Title}{A Dirichlet Form approach to MCMC Optimal Scaling}
\definecolor{linkcolor}{named}{Maroon}
\definecolor{citecolor}{named}{PineGreen}
\definecolor{urlcolor}{named}{RoyalPurple}
\definecolor{okcolor}{named}{OliveGreen}
\definecolor{alertcolor}{named}{BrickRed}
\newcommand{\lxw}{L^2_{(X,W)}}
\newcommand{\I}{\mathcal{I}}
\renewcommand{\L}{\mathcal{L}}
\newcommand{\1}{\mathbb{I}} 
\newcommand{\xN}{\normalfont{\textbf{x}_{1:N}}}
\newcommand{\xn}{\normalfont{\textbf{x}_{1:n}}}
\newcommand{\xNn}{\normalfont{\textbf{x}_{(N+1):n}}}
\newcommand{\X}{\normalfont{\textbf{X}}}
\newcommand{\Xtail}{\normalfont{\textbf{X}_{(n+1):\infty}}}
\newcommand{\xtail}{\normalfont{\textbf{x}_{(n+1):\infty}}}
\newcommand{\XN}{\normalfont{\textbf{X}_{1:N}}}
\newcommand{\Xn}{\normalfont{\textbf{X}_{1:n}}}
\newcommand{\XNn}{\normalfont{\textbf{X}_{(N+1):n}}}
\newcommand{\wN}{\normalfont{\textbf{w}_{1:N}}}
\newcommand{\wn}{\normalfont{\textbf{w}_{1:n}}}
\newcommand{\wNn}{\normalfont{\textbf{w}_{(N+1):n}}}
\newcommand{\W}{\normalfont{\textbf{W}}}
\newcommand{\WN}{\normalfont{\textbf{W}_{1:N}}}
\newcommand{\Wn}{\normalfont{\textbf{W}_{1:n}}}
\newcommand{\WNn}{\normalfont{\textbf{W}_{(N+1):n}}}
\newcommand{\cn}{c_n}
\newcommand{\x}{\underline{x}}
\newcommand{\xA}{\underline{x}_A}
\newcommand{\xB}{\underline{x}_B}
\newcommand{\w}{\underline{w}}
\newcommand{\wA}{\underline{w}_A}
\newcommand{\wB}{\underline{w}_B}
\newcommand{\da}{\Delta_A}
\newcommand{\dda}{\widetilde{\Delta}_A}
\newcommand{\db}{\Delta_B}
\DeclareMathOperator*{\CartesianProduct}{\bigtimes}
\newcommand{\eg}{\emph{e.g.}\xspace}
\newcommand{\ie}{\emph{i.e.}\xspace}
\newcommand{\iid}{\emph{i.i.d.}\xspace}
\newcommand{\scale}{\tau} 
\newcommand{\support}{\operatorname{supp}} 
\newcommand{\AR}{\textbf{AR}} 
\newcommand{\Capacity}{\operatorname{Cap}}
\newcommand{\Hilbert}{\operatorname{\mathcal{H}}}
\newcommand{\Polish}{\operatorname{\mathbb{F}}}
\newcommand{\Sobolev}{\operatorname{S}}
\title{\Title}
\author{%
\href{http://didattica.unibocconi.it/docenti/cv.php?rif=198545}{Giacomo Zanella}%
\footnote{GZ supported in part by EPSRC Research Grant EP/D002060; Email: \href{mailto:giacomo.zanella@unibocconi.it}{\texttt giacomo.zanella@unibocconi.it}}%
,
\href{http://warwick.ac.uk/wsk}{Wilfrid S. Kendall}%
\footnote{WSK supported by EPSRC Research Grant EP/K013939; Email: \href{mailto:w.s.kendall@warwick.ac.uk}{\texttt w.s.kendall@warwick.ac.uk}}%
, and %
\href{http://www.dms.umontreal.ca/~bedard/}{Myl\`ene B\'edard}%
\footnote{MB supported in part by EPSRC Research Grant EP/D002060; Email: \href{mailto:mylene.bedard@umontreal.ca}{\texttt mylene.bedard@umontreal.ca}
\newline
This is a theoretical research paper and as such, no new data were created during this study.
}%
}
\begin{document} 
 \maketitle

\begin{abstract}
\noindent
This paper 
shows how the theory of
Dirichlet forms 
can be used
to deliver proofs of
optimal scaling results for Markov chain Monte Carlo algorithms
(specifically, Metropolis-Hastings random walk samplers)
under regularity conditions
which are substantially weaker 
than those
required by the original approach (based on the use of infinitesimal generators).
The Dirichlet form methods have
the added advantage of providing an explicit construction of the underlying infinite-dimensional context.
In particular, this enables us directly to establish weak convergence to the relevant infinite-dimensional distributions.
\end{abstract}

\noindent
\emph{\href{http://www.ams.org/msc/msc2010.html}{2010 Mathematics Subject Classification}:} 60F05; 60J22, 65C05.
\\
\emph{Key words and phrases:} 
\\
\textsc{Dirichlet form;
infinite-dimensional stochastic processes;
asymptotic analysis for MCMC;
Markov chain Monte Carlo (MCMC);
Metropolis-Hastings Random Walk (MHRW) Sampler;
Mosco convergence;
scaling limits;
optimal scaling;
weak convergence.}

\section{Introduction}\label{sec:introduction}
%

Markov Chain Monte Carlo (MCMC) algorithms form a general and widespread computational methodology 
addressing the problem of drawing samples from complex and intractable probability distributions \citep{RobertCasella-2001,BrooksGelmanJonesMeng-2011}.
Because of their simplicity and their scalability to high-dimensional settings, 
MCMC algorithms are now routinely used in many fields to obtain approximations of integrals that could not be tackled by common numerical methods.
One of the simplest and most popular MCMC schemes, 
the `Metropolis-Hastings Random Walk'
 (MHRW) Algorithm 
generates a Markov chain as follows.
Let $\Omega$ and $\pi$ denote the state space and the density of the distribution of interest.
Given a current state $x$, the chain samples a proposed value $y$ from some symmetric transition kernel $Q(x,\cdot)$ 
and moves to the proposal $y$ with probability $a(x,y)=1\wedge\frac{\pi(y)}{\pi(x)}$ (otherwise staying at $x$).
The resulting Markov chain is reversible with respect to $\pi$.
It can be used to obtain approximate samples and to perform Monte Carlo integration using ergodic averages.
Note that there are many variant algorithms, for example the Metropolis-Adjusted Langevin Algorithm (MALA: \citealp{RobertsRosenthal-1998}).

\subsection{MCMC Optimal Scaling}
Because of the popularity of MCMC algorithms, quantitative and mathematically rigorous understanding of their behaviour is of considerable interest.
The 
framework of \emph{Optimal Scaling}
\citep{RobertsGelmanGilks-1997} provides an effective and powerful approach.
The idea is to consider a sequence of target distributions $\pi^{(n)}$ defined on state spaces \(\Omega^{(1)}\), \(\Omega^{(2)}\), \ldots of increasing dimensionality (typically $\Omega^{(n)}=\Reals^n$), 
and to study the behaviour of the resulting sequence of MCMC algorithms as $n\to\infty$.
One obtains a sequence of Markov chains $\textbf{X}^{(1)}$, $\textbf{X}^{(2)},\dots$,
where each $\textbf{X}^{(n)}=\big\{\textbf{X}^{(n)}(t)\;:\; t=0,1,2, \ldots\big\}$ is obtained from the chosen MCMC algorithm with target $\pi^{(n)}$.
Appropriate sequences of algorithms lead to non-trivial limiting behaviour of $\textbf{X}^{(n)}$, 
namely that a time-rescaled version of $\textbf{X}^{(n)}$ converges to a tractable and informative limiting process $\textbf{X}^\infty$.

The resulting asymptotic analysis provides valuable insight in two practically relevant ways.
Firstly, inspection of the time-rescaled version of $\textbf{X}^{(n)}$ leads to rigorous proofs of useful results about the computational complexity of the sequence of MCMC algorithms,
viewed as depending on the dimensionality of the integration space $\Omega^{(n)}$. 
The now-classical example is that of \citet{RobertsGelmanGilks-1997} (see also \citealp{RobertsRosenthal-1998}). 
Their results show that, for simple targets on $\Omega^{(n)}=\Reals^n$, MHRW needs $O(n)$ steps to explore the state space entirely.
By way of contrast, the more sophisticated MALA will take $O(n^{1/3})$ steps to explore the state space entirely \citep{RobertsRosenthal-2016}.
Secondly, optimal scaling results facilitate optimization of MCMC performance by providing clear and mathematically-based guidance on how to tune the parameters defining the proposal distribution $Q^{(n)}$.
In fact optimizing such parameters for fixed dimensional chains $\textbf{X}^{(n)}$ is a difficult problem, typically not admitting analytic solution,
whereas the limiting object $\textbf{X}^{\infty}$ is often simple enough to allow a neat analytical optimization.
This yields
guidance (\eg~optimal values for average acceptance rates) 
which
is widely used by practitioners, 
especially \emph{via} self-tuning or \emph{Adaptive} MCMC methodologies \citep{AndrieuThoms-2008,Rosenthal-2011}.

Originally \citet{RobertsGelmanGilks-1997} dealt with MHRW and independent, identically distributed (\iid) targets, 
namely $\Omega^{(n)}=\Reals^n$ and $\pi^{n}(x^{(n)})=\prod_{i=1}^n \pi(x^{(n)}_i)$ where $\pi$ is a suitably smooth univariate density function.
The \iid~assumption is restrictive; however there are many extensions showing that the relevant results (order of complexity and optimal average acceptance rate) hold with significantly greater generality.
These extensions include:
independent targets with different scales \citep{Bedard-2006a},
Gibbs random fields \citep{BreyerRoberts-2000},
exchangeable normals \citep{NealRoberts-2006},
elliptical densities \citep{SherlockRoberts-2009},
densities with bounded support \citep{NealRobertsYuen-2012}
and
infinite-dimensional distributions with interaction terms \citep{MattinglyPillaiStuart-2012}.

The Optimal Scaling framework is one of the most successful and practically useful ways of performing asymptotic analysis of MCMC methods in high-dimensions.
Indeed, optimal scaling results are not limited to the analysis of MHRW and MALA, 
but have been used to analyze and compare a wide variety of MCMC schemes: 
Hamiltonian Monte Carlo \citep{BeskosPillaiRobertsSanzSernaStewart-2013}, Pseudo-Marginal MCMC \citep{SherlockThieryRobertsRosenthal-2015},
multiple-try MCMC \citep{BedardDoucMoulines-2012} and many others.

\subsection{Contribution of this paper}
The key mathematical result underpinning optimal scaling results, regardless of the classes of targets and algorithms considered,
concerns the convergence of time-rescalings of the sequence of resulting Markov chains $\textbf{X}^{(n)}$.
Such convergence is usually expressed in the form of weak convergence of the first coordinate ${X}^{(n)}_1$ of the vector process $\textbf{X}^{(n)}$,
with the weak limit being a one-dimensional limiting diffusion process ${X}^{\infty}_1$ (typically a Langevin diffusion).
The main interest of Optimal Scaling results lies exactly in the high-dimensionality of the target distribution.
So it is arguable that focusing on the first component only is somewhat restrictive and undesirable,
insofar as it deflects attention from the genuine multivariate problem of interest.
Rather than focusing on one-dimensional marginals, it would be more satisfying to study the full joint distribution of $\textbf{X}^{(n)}$. 
To do so one has to embed the process $\textbf{X}^{(n)}$, originally living in $\Omega^{(n)}=\Reals^n$, into the limiting space $\Omega^{\infty}=\Reals^\infty$ 
(for example by allowing moves of only the first $n$ coordinates, while viewing the remaining coordinates as being static and drawn from equilibrium).
One then needs to prove the convergence of the whole stochastic process $\textbf{X}^{(n)}$ to the infinite-dimensional limiting stochastic process $\textbf{X}^{\infty}$.

\citet{RobertsGelmanGilks-1997} observe that it is not hard to extend classic optimal scaling results to the study of convergence of a finite and fixed number of components 
(\ie~$\textbf{X}^{(n)}_{1:k}$ converging to $\textbf{X}^{\infty}_{1:k}$ for fixed $k$ and $n$ going to infinity),
but this 
confines attention to
the joint distribution of $\textbf{X}^{(n)}$ for fixed \(n\).
The
approach using \cite{EthierKurtz-1986} results, 
based on uniform convergence of generators, does not easily
apply to the study of processes living on infinite-dimensional state spaces (\eg~it can be necessary to assume that the state space is locally compact).
Moreover such techniques typically require rather substantial regularity conditions (in terms of target density derivatives and their moments).


In this paper we propose a different probabilistic approach to MCMC Optimal Scaling, relying on infinite-dimensional Dirichlet Form theory \citep{MaRoeckner-1992} to prove the crucial convergence result.
The abstract and powerful theory of Dirichlet forms,
and specifically the notion of \citet{Mosco-1994} convergence,
allows us to work directly and naturally on the infinite dimensional space $\Reals^\infty$ while requiring only modest regularity assumptions.
In the following we will focus on the classic MHRW framework of \citet{RobertsGelmanGilks-1997}, 
proving convergence for the whole infinite-dimensional stochastic process under mild regularity assumptions 
(finite Fisher information and local H\"older and controlled growth of first derivative of log-density).
In MCMC scenarios 
the smoothness and tail-behaviour of the target can impact massively on the performance of the algorithm \citep{NealRobertsYuen-2012,RobertsTweedie-1996b};
therefore it is important to establish general conditions under which the Optimal Scaling asymptotic analysis is still valid.
The following results are relevant to the Computational Statistics community interested in a theoretical understanding of MCMC methods,
and also to the Stochastic Processes community interested in convergence of stochastic processes and applications of Dirichlet Form theory.
To the best of our knowledge, this is the first application of Mosco convergence to the analysis of MCMC methods,
and we expect that the proof strategies developed in this paper will be useful to people seeking to prove convergence of infinite-dimensional stochastic processes arising in MCMC and other applications.

\subsection{Organization of the paper}
Section \ref{sec:overview_and_results} defines the class of MCMC algorithms being considered, and briefly reviews relevant theoretical notions, 
including the notion of Mosco convergence of forms \citep{Mosco-1994} and weak convergence through Dirichlet forms \citep{Sun-1998}.
It also presents the main results of the paper, namely Mosco and weak convergence of the relevant infinite-dimensional processes.
Section \ref{sec:proof_of_mosco} establishes Mosco convergence, while Section \ref{sec:weak} deals with weak convergence (under somewhat stronger regularity conditions):
the existence of the limiting process is established in Appendix \ref{appendix:capacity}.
Finally Section \ref{sec:discussion} discusses possibilities for future work and compares our work to some recent results 
involving Optimal Scaling for infinite-dimensional distributions \citep{MattinglyPillaiStuart-2012} 
and Optimal Scaling under weak regularity of the target \citep{DurmusLeCorffMoulinesRoberts-2016}.

\section{Overview and main results}\label{sec:overview_and_results}
%
This paper focuses on Metropolis-Hastings random walk samplers based on a simple target, 
namely the joint distribution of a large independent sample taken from a fixed distribution satisfying modest regularity conditions.
Suppose the fixed distribution is given by $\pi(\d x)=f(x)\d x$, a probability measure on $\Reals$.
Assume $f(x)=e^{\phi(x)}$ (so that \(f\) is everywhere positive), 
satisfying a finite Fisher information condition
\begin{equation}\label{eq:fisher}
\I=\int_{-\infty}^{\infty}|\phi'(x)|^2 \, f(x)\;\d x<\infty\,,
\end{equation}
and assume that the potential $\phi$ is continuous and everywhere differentiable, 
with derivative $\phi'=(\log f)'$ satisfying 
the following combination of a local H\"older condition and a growth condition:
for some $k>0$, $0<\gamma<1$ and $\alpha>1$,
\begin{equation}\label{eq:holder}
\qquad|\phi'(x+v)-\phi'(x)|
\quad<\quad
k\,\max\{|v|^\gamma,|v|^\alpha\}\,,\qquad x,v\in \Reals\,.
\end{equation}
This combined growth / local H\"older condition is much less restrictive than a global H\"older regularity with exponent $\gamma$.
We do not believe that condition \eqref{eq:holder} is necessary for our results to hold: 
however it combines the merit of reasonable generality with the advantage of simplicity of expression.
Note that condition \eqref{eq:holder} suffices for establishing optimal scaling in an \(L^2\) sense; 
however the Dirichlet form approach presently needs to use a stronger Lipschitz condition in order to establish weak convergence 
(for
more details 
see
Section \ref{sec:results}).

The following notational conventions are used.
Upper case letters denote random variables and corresponding lower case letters denote possible realizations, \eg~$X_1$ and $x_1$.
By $\L(X_1)$ we mean the distribution (or law) of the random variable $X_1$, for example $\L(W_1)=\mathcal{N}(0,1)$.
Subscripts denote vector components, \eg~$\XN=(X_1,\dots,X_N)$ or $\wNn=(w_{N+1},\dots,w_n)$.
Finally, 
we interpret the evaluation of probability density functions on vectors multiplicatively:
if $f$ is a one-dimensional probability density then its evaluation at a vector \(\XN\) is interpreted as the product of the density evaluated at each component.
Thus for example $f(\XN)=f(X_1)\cdots f(X_N)$, while $f(\wNn)=f(w_{N+1})\cdots f(w_n)$.

\subsection{Metropolis-Hastings Random Walk Sampler}\label{sec:MHRW}
For each $n=1,2,\ldots$, let $\big\{\textbf{X}^{(n)}(t): t=0,1,2, \ldots\big\}$
be a Metropolis-Hastings Random Walk (MHRW) sampler on $\Reals^n$,
with target measure $\pi^{\otimes n}(\d x_1,\dots,\d x_n)$ 
and with proposal measure defined by using independent and identically distributed Gaussian proposals on each component.
The component proposals are taken to be $\mathcal{N}(0,\frac{\scale^2}{n})$, for fixed $\scale>0$.
We seek to understand the limiting behaviour of a time-rescaled version of $\textbf{X}^{(n)}$ as $n\to\infty$.

For the sake of convenience we interpret $\big\{\textbf{X}^{(n)}(t):t=0,1,2,\ldots\big\}$ as an infinite-dimensional stochastic process on $\Reals^\infty$ updating only the first $n$ components,
with the remaining components drawn independently from the target distribution \(\pi\) and held fixed in time. 
The state space $\Reals^\infty$ is equipped with the product topology and corresponding Borel $\sigma$-algebra,
and we choose the infinite product measure $\pi^{\otimes\infty}$ as invariant measure.
It 
will be
useful 
to note that $\Reals^\infty$ is a Polish space (\ie~separable and completely metrizable topological space).
For example it can be equipped with the metric $d(\textbf{x},\textbf{y})=\sum_{j=1}^\infty 2^{-j}\frac{|x_j-y_j|}{1+|x_j-y_j|}$, which induces the product topology.
However $\Reals^\infty$ is not a Banach space, because its topology cannot be derived from any norm (for discussion of the broader context here see \citealp[Chapter IV]{Conway-1994}; 
details about $(\Reals^\infty,\pi^{\otimes \infty})$ are discussed in \citealp[Section 3]{Eldredge-2012}).

Our attention is focussed on the following explicit construction of the first step of the MHRW, hence defining $\big\{\textbf{X}^{(n)}(t):t=0,1\big\}$ 
(extension of this explicit construction to all of the time-homogeneous Markov process $\big\{\textbf{X}^{(n)}(t):t=0,1,2\ldots\big\}$ follows immediately from the Markov property of \(\textbf{X}^{(n)}\), 
but will not be the focus of attention in the sequel).
Let $\X=(X_1,X_2,\ldots)$ be a sequence of independent and identically distributed random variables on $\Reals$ with $\mathbb{P}_{X_1}(\d x)=\pi(\d x)$, 
let $\W=(W_1,W_2,W_3...)$ be a sequence of independent and identically distributed standard normal random variables on $\Reals$ with standard Gaussian density $g$,
and let $U$ be a Uniform$(0,1)$ random variable.
We require $\X$, $\W$ and $U$ to be independent of each other.
The first step of the $n^\text{th}$ MHRW $\big\{\textbf{X}^{(n)}(t):t=0,1\big\}$ is defined on $(\Reals^\infty,\pi^{\otimes \infty})$ by
\begin{equation*}
\textbf{X}^{(n)}(0)=(X_1,...,X_n,X_{n+1},X_{n+2},\dots)\;,\qquad
\textbf{X}^{(n)}(1)=(X_1+A_n\frac{\scale}{\sqrt{n}} W_1 ,...,X_n+A_n\frac{\scale}{\sqrt{n}} W_n ,X_{n+1},X_{n+2}\dots)\;,
\end{equation*}
where $A_n$ equals $1$ if $U<a(\Xn,\Wn)$ and $0$ otherwise, with
\begin{equation}
a(\Xn,\Wn)\quad=\quad
1\wedge\frac{f(\Xn+\frac{\scale}{\sqrt{n}} \Wn)}{f(\Xn)} 
\quad=\quad
1\wedge\frac{f(X_1+\frac{\scale}{\sqrt{n}} W_1 )\cdots f(X_n+\frac{\scale}{\sqrt{n}} W_n )}{f(X_1)\cdots f(X_n)} 
\end{equation}
being the Metropolis-Hastings acceptance function designed to induce reversibility.
Thus, as \(n\) increases, \(X^{(n)}\) proposes smaller jumps extending over a larger number of dimensions.
In due course we will re-scale time so that the smaller jumps are proposed more frequently in compensation for their reduced size.
The key result of \citet{RobertsGelmanGilks-1997} then runs as follows.
\begin{thm}[\citealp{RobertsGelmanGilks-1997}, Theorem 1.1]\label{thm:RGG}
 Suppose that the probability density \(f\) of \(\pi\) is positive and \(C^2\), that $f'/f$ is Lipschitz continuous and that
 \begin{align}\label{eq:RGG1}
  \int_{-\infty}^\infty
  \left(\frac{f'(x)}{f(x)}\right)^8
  f(x){\d}x
  \quad&=\quad M \quad<\quad \infty\,,\\
  \label{eq:RGG2}
   \int_{-\infty}^\infty
  \left(\frac{f''(x)}{f(x)}\right)^4
  f(x){\d}x
  \quad&<\quad \infty\,.
  \end{align}
Let \(U^n_t = X^{(n)}_1(\lfloor nt\rfloor)\), the first component of \(\textbf{X}^{(n)}\) at the re-scaled time \(\lfloor nt\rfloor\).
Then \(U^{(n)}\Rightarrow U\) as \(n\to\infty\), where \(U_0\) is distributed as \(\pi\), and \(U\) solves the stochastic differential equation
\begin{equation}\label{eq:RGG-SDE}
 {\d}U \quad=\quad \tau\sqrt{c(\tau)}{\d}B + \half \tau^2 c(\tau)\frac{f'(U)}{f(U)}{\d}t
\end{equation}
for \(c(\tau)=2F(-\tau\sqrt{\I}/2)\),
\(\I=\int_{-\infty}^\infty \left(f'/f\right)^2 f {\d}x\), where \(F\) is the standard normal distribution function.
\end{thm}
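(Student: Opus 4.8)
The plan is to establish the stated weak convergence by the classical route of infinitesimal-generator convergence combined with the Markov-process limit theory of \citet{EthierKurtz-1986} --- essentially the argument of \citet{RobertsGelmanGilks-1997} themselves, which the Dirichlet form approach of the present paper is designed to avoid. Write $G$ for the generator of the diffusion \eqref{eq:RGG-SDE}, acting on $h\in C_c^\infty(\Reals)$ by
\[
Gh(u)\;=\;\half\tau^2 c(\tau)\Bigl(h''(u)+\tfrac{f'(u)}{f(u)}\,h'(u)\Bigr)\;=\;\half\tau^2 c(\tau)\,\tfrac{1}{f(u)}\bigl(f h'\bigr)'(u)\,,
\]
so that $C_c^\infty(\Reals)$ is a core for $G$ and $\pi$ is a reversing (hence invariant) measure, consistently with the claimed law of $U_0$. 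Writing $\phi=\log f$, the discrete generator of the time-rescaled chain, applied to such an $h$ and regarded as a function of the frozen first coordinate $x_1$, is
\[
G_n h(x_1)\;=\;n\,\mathbb{E}\Bigl[\bigl(h(x_1+\tfrac{\tau}{\sqrt n}W_1)-h(x_1)\bigr)\,\bigl(1\wedge e^{S_n}\bigr)\Bigr]\,,\qquad
S_n\;=\;\sum_{i=1}^{n}\bigl(\phi(x_i+\tfrac{\tau}{\sqrt n}W_i)-\phi(x_i)\bigr)\,,
\]
where the expectation is over $W_1,\dots,W_n$ i.i.d.\ standard normal and $x_2,\dots,x_n$ i.i.d.\ from $\pi$; the aim is to prove $\sup_{x_1}|G_n h(x_1)-Gh(x_1)|\to 0$.

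First I would Taylor-expand the increment, $h(x_1+\tfrac{\tau}{\sqrt n}W_1)-h(x_1)=\tfrac{\tau}{\sqrt n}W_1 h'(x_1)+\tfrac{\tau^2}{2n}W_1^2 h''(x_1)+O(n^{-3/2})$ uniformly in $x_1$ (as $h\in C_c^\infty$), and decompose $S_n=R_n+S_n'$, where $R_n=\phi(x_1+\tfrac{\tau}{\sqrt n}W_1)-\phi(x_1)=\tfrac{\tau}{\sqrt n}W_1\phi'(x_1)+O(n^{-1})$ is the $i=1$ term and $S_n'$ is the sum over $i\ge2$, independent of $W_1$. Expanding each summand of $S_n'$ to second order: the leading contribution $\tfrac{\tau}{\sqrt n}\sum_{i\ge2}W_i\phi'(x_i)$ is a normalised sum of i.i.d.\ centred terms with variance $\mathbb{E}[W_i^2]\,\mathbb{E}[\phi'(X_1)^2]=\I$, finite by \eqref{eq:fisher}, so by the central limit theorem it converges to $\mathcal{N}(0,\tau^2\I)$; the next contribution $\tfrac{\tau^2}{2n}\sum_{i\ge2}W_i^2\phi''(x_i)$ converges, by the law of large numbers, to $\tfrac{\tau^2}{2}\mathbb{E}[\phi''(X_1)]=-\tfrac{\tau^2}{2}\I$, the last equality being the integration by parts $\int\phi'' f=-\int(\phi')^2 f$ (boundary terms vanishing under the $C^2$, Lipschitz and moment hypotheses \eqref{eq:RGG1}--\eqref{eq:RGG2}); and the remaining third- and higher-order terms vanish in probability, again using those hypotheses. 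Hence $S_n'\Rightarrow Z\sim\mathcal{N}\bigl(-\tfrac{\tau^2\I}{2},\tau^2\I\bigr)$, while $S_n=S_n'+O_{\mathbb{P}}(n^{-1/2})$.

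The delicate point, which I expect to be the main obstacle, is that $W_1$ appears both in the increment of $h$ and, through $R_n$, in the acceptance ratio, so the expectation defining $G_n h$ does not factorise. To handle this I would use the first-order expansion $1\wedge e^{S_n'+R_n}=(1\wedge e^{S_n'})+R_n\,e^{S_n'}\1\{S_n'<0\}+o(R_n)$, which, together with $\mathbb{E}[W_1]=0$, $\mathbb{E}[W_1^2]=1$, $\mathbb{E}[W_1^3]=0$ and the independence of $W_1$ and $S_n'$, gives
\[
\mathbb{E}\bigl[W_1(1\wedge e^{S_n})\bigr]\;=\;\tfrac{\tau}{\sqrt n}\,\phi'(x_1)\,\mathbb{E}\bigl[e^{S_n'}\1\{S_n'<0\}\bigr]+o(n^{-1/2})\,,\qquad
\mathbb{E}\bigl[W_1^2(1\wedge e^{S_n})\bigr]\;=\;\mathbb{E}\bigl[1\wedge e^{S_n'}\bigr]+o(1)\,.
\]
Passing to the limit and evaluating the Gaussian integrals for $Z\sim\mathcal{N}(-\tfrac{\sigma^2}{2},\sigma^2)$ with $\sigma^2=\tau^2\I$ --- namely $\mathbb{E}[e^Z\1\{Z<0\}]=F(-\sigma/2)$ and $\mathbb{E}[1\wedge e^Z]=2F(-\sigma/2)=c(\tau)$ --- the prefactors $n\cdot\tfrac{\tau}{\sqrt n}$ and $n\cdot\tfrac{\tau^2}{2n}$ combine with the two expansions above to produce precisely the drift $\half\tau^2 c(\tau)\tfrac{f'(x_1)}{f(x_1)}h'(x_1)$ and the diffusion term $\half\tau^2 c(\tau)h''(x_1)$, i.e.\ $G_n h(x_1)\to Gh(x_1)$. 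Uniformity in $x_1$ follows because $h'$ and $h''$ are supported in a fixed compact set on which $\phi'$ is bounded, while the contributions from $x_1$ outside $\support h$ are controlled using the flatness of $h\in C_c^\infty$ at the boundary of its support together with a Gaussian tail bound; the moment hypotheses \eqref{eq:RGG1}--\eqref{eq:RGG2} are exactly what make the various remainder estimates uniform.

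Finally, with uniform convergence $G_n h\to Gh$ on the core $C_c^\infty(\Reals)$ in hand, and the initial laws $\mathcal{L}(U^{(n)}_0)=\pi$ constant in $n$, I would invoke the standard generator-convergence criterion for Markov processes (\citealp[Chapter 4]{EthierKurtz-1986}), which also supplies the requisite relative compactness, to conclude that $U^{(n)}\Rightarrow U$ in the Skorokhod space, where $U$ solves the stochastic differential equation \eqref{eq:RGG-SDE} started from $U_0\sim\pi$.
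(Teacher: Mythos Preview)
The paper does not prove this theorem at all: it is stated purely as a citation of \citet[Theorem~1.1]{RobertsGelmanGilks-1997}, in order to set up the contrast with the paper's own Dirichlet-form results (Theorems~\ref{thm:conv_of_dir_forms} and~\ref{thm:weak_conv_of_dir_forms}). So there is no ``paper's own proof'' to compare against; your proposal is, as you say yourself, an outline of the original RGG argument via generator convergence and \citet{EthierKurtz-1986}, which is indeed the route by which the result was first established.

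As a sketch of that argument your outline is broadly right --- the Taylor expansion of $h$, the CLT/LLN decomposition of $S_n'$ yielding the $\mathcal{N}(-\tfrac{\tau^2}{2}\I,\tau^2\I)$ limit, the Gaussian identities giving $c(\tau)=2F(-\tau\sqrt{\I}/2)$, and the handling of the $W_1$--$R_n$ coupling are all the correct ingredients. One point does need more care, however. You write $G_n h(x_1)$ with ``the expectation \ldots\ over $x_2,\dots,x_n$ i.i.d.\ from $\pi$'', as if the first coordinate were itself Markov with the remaining coordinates integrated out. It is not: the chain lives on $\Reals^n$, and $U^{(n)}$ is a non-Markovian coordinate projection. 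The actual RGG proof does not average over $x_{2:n}$; instead it isolates a sequence of ``good'' sets $F_n\subset\Reals^n$ on which the relevant empirical averages of $\phi'(x_i)^2$ and $\phi''(x_i)$ are close to $\I$ and $-\I$, shows that the stationary chain stays in $F_n$ up to time $nT$ with probability tending to~1, and proves \emph{uniform} convergence of the full $n$-dimensional generator (applied to functions of $x_1$ only) to $Gh(x_1)$ over $\mathbf{x}\in F_n$. Your averaging-over-$x_{2:n}$ formulation gives the right heuristics for the limit but skips this localisation step, which is exactly where the strong moment conditions \eqref{eq:RGG1}--\eqref{eq:RGG2} are spent in the original argument.
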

We shall show that the Dirichlet form approach allows us to replace the restrictive regularity and moment conditions of Theorem \ref{thm:RGG}
by \eqref{eq:fisher} and \eqref{eq:holder},
thus avoiding second-order conditions on \(f\) and concerns only weak growth and local H\"older conditions on \(\phi'=f'/f\),
as well as being an approach naturally adapted to the underlying infinite-dimensional framework. 

\subsection{Dirichlet forms}\label{sec:dirichlet}
%
Consider a Polish space \(\Polish\) furnished with a probability measure \(\mu\).
In the following we will be interested in \(\Polish=\Reals^\infty\) and \(\mu=\pi^{\otimes \infty}\) (for \(\pi\) as given at the beginning of Section \ref{sec:overview_and_results}).

We now recall some notions from the literature of Dirichlet forms
(for more details see \citealp{MaRoeckner-1992}).
Note that the general theory of Dirichlet forms applies even if \(\mu\) is merely a \(\sigma\)-additive measure, rather than a probability measure.
However we will describe results only in the case of a probability measure, which reduces the complexity required in the following definitions.

Let $\Hilbert$ be the Hilbert space $\Hilbert=L^2(\Polish,\mu)\).
For any $h$ and $v$ in $\Hilbert$, denote the usual $L^2$ inner product by $\langle h,v\rangle_{\Hilbert}=\int_{\Polish}h(x)v(x)\,\mu(\d x)$ 
and the related norm \(\|h\|_{\Hilbert}\) by $\|h\|_{\Hilbert}^2=\langle h,h\rangle_{\Hilbert}=\int_{\Polish}h(x)^2\,\mu(\d x)$.

A \emph{form} $\Phi$ on $\Hilbert$ is a non-negative definite and symmetric bilinear form $\Phi(h_1,h_2)$, 
defined for \(h_1\), \(h_2\) belonging to a dense linear subspace $D(\Phi)$ of $\Hilbert$, 
the \emph{domain} of $\Phi$ \citep[Section 1]{Mosco-1994}.
We will commit a mild abuse of notation by
using $\Phi(h)=\Phi(h,h)$ to denote
the associated quadratic functional,
and we will also refer to \(\Phi(h)\) as a form 
(the polarization identity yields a 1:1 correspondence between forms and quadratic functionals).
A form \(\Phi\) can be extended to the whole space $\Hilbert$ by setting $\Phi(h)=\infty$ for any $h\in \Hilbert\setminus D(\Phi)$.
A Dirichlet form is a closed, Markovian form \citep[Section 1]{Mosco-1994}:
its domain $D(\Phi)$ is complete under the inner product $\Phi(h_1,h_2)+\langle h_1,h_2\rangle_{\Hilbert}$
and moreover $\Phi(\tilde{h})\leq\Phi(h)$ when $\tilde{h}=(h \vee 0)\wedge 1 \in D(\Phi)$ for $h\in D(\Phi)$.

Given a Markov process on \(\Polish\), a Dirichlet form can be associated with it as follows.
In the discrete-time case,
let $\{\textbf{X}(t):t=0,1,\ldots\}$ be a discrete-time Markov chain on the Polish space $\Polish$, assumed reversible with respect to the probability measure $\mu$. 
The corresponding Dirichlet form $\left(\Phi,D(\Phi)\right)$ is given for \(h\in D(\Phi)=\Hilbert\) by
\begin{equation}\label{eq:dir_form_discrete}
\Phi(h)
\quad=\quad
\Expect{\Big(h\big(\textbf{X}(0)\big)-h\big(\textbf{X}(1)\big)\Big)h\big(\textbf{X}(0)\big)}
\,=\,
\frac{1}{2}\Expect{\Big(h\big(\textbf{X}(0)\big)-h\big(\textbf{X}(1)\big)\Big)^2}\,,
\end{equation}
with starting state $\textbf{X}(0)$ distributed according to $\mu$. 
Note that the second equality in \eqref{eq:dir_form_discrete} holds because of the reversibility assumption. 

Now consider the continuous-time case. 
Let $\{\textbf{X}^x(t):0\leq t < \infty\}$ be a continuous-time Markov process on $\Polish$, 
also reversible with respect to the measure $\mu$.
Here time is denoted by $t$, while $x$ is the starting point of the process.
Let $\{T_t:t\geq0\}$ denote the Markov semigroup of operators $T_t:\Hilbert\rightarrow \Hilbert$ 
given by $(T_th)(x)=\Expect{h\big(\textbf{X}^x(t)\big)}$ for $h\in \Hilbert$ and $x\in\Polish$.
The Dirichlet form $\left(\Phi,D(\Phi)\right)$ associated with $\{\textbf{X}^x(t):t\geq0\}$ (for \(x\in\Polish\)) is given by
\begin{equation}\label{eq:dir_form}
\Phi(h)
\quad=\quad
\lim_{t\downarrow 0}\frac{\langle (I-T_t)h,h\rangle_{\Hilbert}}{t}
\;=\;
\lim_{t\downarrow 0}\frac{1}{2}\frac{\langle (I-T_t)h,(I-T_t)h\rangle_{\Hilbert}}{t}\,,
\end{equation}
with $D(\Phi)$ being the subset of $\Hilbert$ for which the limit in \eqref{eq:dir_form} is finite.
Note that \eqref{eq:dir_form_discrete} can be obtained as a special case of \eqref{eq:dir_form},
by reformulating the discrete-time Markov chain as a continuous-time process with jumps happening according to an exponential clock of unit rate.

\citet{MaRoeckner-1992} show that, under some mild regularity conditions  
(for example regularity or quasi-regularity of the Dirichlet form in question; see Definition \ref{def:regularity} in Section \ref{sec:capacity} below), 
for each Dirichlet form $\Phi$ 
there exists a Markov process $\{\textbf{X}^x(t):t\geq0\}$ ($x\in\Polish$) such that $\Phi$ is its associated Dirichlet form.

\subsection{Mosco convergence of forms}\label{sec:mosco-convergence}
 \citet[Definition 2.1.1]{Mosco-1994} introduced the following notion of convergence of forms.
 In the case of Dirichlet forms, this entails uniform convergence of the semigroups of the associated processes:
see Theorem \ref{thm:convergence_of_semigroups} below.
\begin{defn}\label{defn:Mosco_convergence}
A sequence of forms $\{\Phi_n:n=1,2,\ldots\}$ in $\Hilbert$ converges to a form $\Phi$ in $\Hilbert$
(using the notation $\Phi_n\stackrel{M}\rightarrow\Phi$)
if the following conditions hold:
\begin{itemize}
\item[(M1)] For any $h, h_1, h_2, \ldots \in \Hilbert$ with $h_n\stackrel{w}\rightarrow h$ weakly in $\Hilbert$, it is the case that 
\[
\liminf_{n\to\infty} \Phi_n (h_n)\quad\geq\quad \Phi(h)\,; 
\]
\item[(M2)] For any $h\in \Hilbert$ there exists a sequence \(h_1\), \(h_2\), \ldots such that $h_n\rightarrow h$ (strongly) in $\Hilbert$ and 
\[
\limsup_{n\to\infty} \Phi_n (h_n)\quad\leq\quad \Phi(h)\,. 
\]
\end{itemize}
\end{defn}
\begin{rem}
 There is a potential terminological confusion between weak convergence of elements of a Hilbert space (\(h_n\stackrel{w}{\to}h\) if \(\langle h_n,g\rangle\to\langle h,g\rangle\) for all \(g\in\Hilbert\))
 and weak convergence of distributions of random variables (\(Z_n\Rightarrow Z\) if \(\Expect{f(Z_n)}\to\Expect{f(Z)}\) for all bounded continuous \(f\)).
 In the language of functional analysis, the second kind of convergence is more properly thought of as weak\({}^*\) convergence of (probability) measures.
 In this second case we will refer to (probabilistic) weak convergence.
\end{rem}

The following result plays a key enabling r\^ole in the application of Dirichlet forms to MCMC theory.
\begin{thm}\citep[Corollary 2.6.1]{Mosco-1994}\label{thm:convergence_of_semigroups}
Let $\Phi$ and $\Phi_n$ (for \(n=1,2,\dots\)) be Dirichlet forms on $\Hilbert$ with associated semigroups $\{\textbf{T}_t:t\geq0\}$ and $\{\textbf{T}^{(n)}_t: t\geq0\}$.
Then $\Phi_n\stackrel{M}\rightarrow\Phi$ if and only if the associated semigroups converge uniformly in the strong operator topology, meaning that
$\sup_{0<t\leq t_0} \big\|T^{(n)}_th-T_th\big\|_{\Hilbert}\rightarrow 0$ as \(n\to\infty\),  for any $t_0>0$ and $h\in \Hilbert$.
\end{thm}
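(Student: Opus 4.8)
The plan is to prove both implications by routing through the resolvents: first that Mosco convergence of the forms is equivalent to strong convergence of the resolvents $G^{(n)}_\lambda=(\lambda+L_n)^{-1}\to G_\lambda=(\lambda+L)^{-1}$ (for one, equivalently every, $\lambda>0$), where $L_n$, $L$ are the non-negative self-adjoint operators associated with $\Phi_n$, $\Phi$; and then that strong resolvent convergence is equivalent to uniform-on-compacts strong convergence of the semigroups $T^{(n)}_t=e^{-tL_n}$. The second equivalence is the classical Trotter--Kato theorem for strongly continuous contraction semigroups (see, e.g., \citealp[Chapter 1]{EthierKurtz-1986}); its hypotheses hold automatically here, since a closed non-negative symmetric form yields a self-adjoint $L\geq0$ and hence a strongly continuous contraction semigroup $e^{-tL}$ (the Markovian property of a Dirichlet form additionally makes this semigroup sub-Markovian, which is what permits the associated Markov process of \citet{MaRoeckner-1992}, but is not needed for the present purely functional-analytic equivalence). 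So the real content, following \citet{Mosco-1994}, is the equivalence of Mosco convergence with strong resolvent convergence, and that is where I would concentrate the effort.

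For the forward direction (Mosco $\Rightarrow$ strong resolvent convergence) I would exploit the variational description of the resolvent: $u=G_\lambda f$ is the unique minimiser over $\Hilbert$ of $J(v)=\Phi(v)+\lambda\|v\|_{\Hilbert}^2-2\langle f,v\rangle_{\Hilbert}$, and likewise $u_n=G^{(n)}_\lambda f$ minimises the analogous $J_n$. Since $\|u_n\|_{\Hilbert}\leq\|f\|_{\Hilbert}/\lambda$, some subsequence converges weakly, $u_{n_k}\stackrel{w}{\to}u$. For arbitrary $v\in\Hilbert$ choose a recovery sequence $v_n\to v$ as in (M2); comparing $J_{n_k}(u_{n_k})\leq J_{n_k}(v_{n_k})$, taking $\liminf$ on the left via (M1) together with weak lower semicontinuity of the norm, and $\limsup$ on the right via (M2), gives $J(u)\leq J(v)$ for all $v$, so $u=G_\lambda f$; uniqueness of the minimiser then forces the whole sequence $u_n\stackrel{w}{\to}G_\lambda f$. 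To upgrade to strong convergence I would take $v=u$ in the last comparison to get $\limsup_n J_n(u_n)\leq J(u)$, and combine this with the separate $\liminf$ bounds $\liminf_n\Phi_n(u_n)\geq\Phi(u)$ and $\liminf_n\|u_n\|_{\Hilbert}^2\geq\|u\|_{\Hilbert}^2$ to deduce $\|u_n\|_{\Hilbert}\to\|u\|_{\Hilbert}$; weak convergence plus norm convergence yields $u_n\to u$ strongly in $\Hilbert$.

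For the reverse direction (strong resolvent convergence $\Rightarrow$ Mosco) I would use the Yosida-type approximating forms $\Phi^{(\alpha)}(h)=\alpha\langle(I-\alpha G_\alpha)h,h\rangle_{\Hilbert}=\alpha\langle L(\alpha+L)^{-1}h,h\rangle_{\Hilbert}$, which are bounded and everywhere defined and, by the spectral theorem, increase to $\Phi(h)$ as $\alpha\uparrow\infty$. Strong resolvent convergence implies, by continuity of the bounded Borel functional calculus on norm-bounded sets, that $A_n:=\alpha L_n(\alpha+L_n)^{-1}\to A:=\alpha L(\alpha+L)^{-1}$ strongly for each fixed $\alpha$. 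For (M1): if $h_n\stackrel{w}{\to}h$ then $A_n^{1/2}h_n\stackrel{w}{\to}A^{1/2}h$ (pair against a test vector and split into a part controlled by strong convergence of $A_n^{1/2}$ and a part controlled by the uniform bound on $\|h_n\|_{\Hilbert}$), whence $\liminf_n\Phi_n(h_n)\geq\liminf_n\Phi^{(\alpha)}_n(h_n)=\liminf_n\|A_n^{1/2}h_n\|_{\Hilbert}^2\geq\|A^{1/2}h\|_{\Hilbert}^2=\Phi^{(\alpha)}(h)$, and letting $\alpha\uparrow\infty$ gives (M1). For (M2): for $h\in D(\Phi)$ set $h^{\alpha}_n=\alpha G^{(n)}_\alpha h$; a short spectral computation gives $\Phi_n(h^{\alpha}_n)\leq\Phi^{(\alpha)}_n(h)\to\Phi^{(\alpha)}(h)\leq\Phi(h)$, while $h^{\alpha}_n\to\alpha G_\alpha h$ strongly in $n$ and $\alpha G_\alpha h\to h$ strongly as $\alpha\uparrow\infty$; an Attouch-type diagonal extraction of $\alpha=\alpha_n\uparrow\infty$ then produces the recovery sequence $h_n=h^{\alpha_n}_n$ (the case $\Phi(h)=\infty$ being trivial with $h_n=h$).

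I expect the main obstacle to be this reverse implication, and within it the careful management of the interplay between the weak and strong topologies in the approximating forms --- in particular the fact that weak--weak pairings do not pass to the limit, which is precisely what forces the square-root device in (M1), together with the diagonal argument needed to close (M2). By contrast the forward implication is essentially soft, relying only on convexity of $J_n$ and weak sequential compactness of balls in $\Hilbert$, and the final semigroup step is a black-box application of Trotter--Kato once contractivity and strong continuity of the semigroups have been noted.
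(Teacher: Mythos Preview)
The paper does not supply its own proof of this theorem: it is quoted verbatim as Corollary~2.6.1 of \citet{Mosco-1994} and used as a black box. So there is nothing in the paper to compare your proposal against beyond the citation itself.

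That said, your outline is the standard route and is essentially the one taken by \citet{Mosco-1994}. The decomposition into (i) Mosco convergence $\Leftrightarrow$ strong resolvent convergence and (ii) strong resolvent convergence $\Leftrightarrow$ locally-uniform strong semigroup convergence (Trotter--Kato) is exactly right, and your variational argument for the forward half of (i) --- weak compactness of $u_n=G^{(n)}_\lambda f$, identification of the weak limit via (M1)/(M2), then upgrading to strong convergence by matching $\limsup J_n(u_n)\leq J(u)$ against the separate $\liminf$ bounds --- is precisely Mosco's argument. The reverse half \emph{via} the Yosida approximations $\Phi^{(\alpha)}$, with the square-root device to push weak convergence through $A_n^{1/2}$ for (M1) and the diagonal extraction for (M2), is also the standard mechanism. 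One small point worth making explicit when you write it up: the passage from strong convergence of $A_n=\alpha(I-\alpha G^{(n)}_\alpha)$ to strong convergence of $A_n^{1/2}$ uses that $\sqrt{\cdot}$ can be uniformly approximated by polynomials on the common spectral interval $[0,\alpha]$, so that the functional calculus is continuous under strong operator convergence of uniformly bounded self-adjoint operators; this is routine but is the only place where a reader might pause.
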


\subsection{Nests, capacity and quasi-regularity}\label{sec:capacity}
We first introduce the notion of capacity (see \citealp[(2.2)]{AlbeverioRoeckner-1989} and \citealp[Def.III.2.1 and Ex.III.2.10]{MaRoeckner-1992})
\begin{defn}[Dirichlet form capacity]\label{def:capacity}
Given an open set \(U\subseteq \Polish\), we define the \emph{capacity} of $U$ as
\begin{equation}
\Capacity(U)
\;=\;
\inf\{\|h\|^2_{\Hilbert} + \Phi(h) \;:\; h\geq 1\text{ on }U\; \text{\(\mu\)-almost everywhere } \}\,,
\end{equation}
and, for general subsets $A\subseteq \Polish$,
\begin{equation}
\Capacity(A)
\;=\;
\inf\{\Capacity(U) \;:\; A\subseteq U\subseteq \Polish,\;U\text{ open}\}\,.
\end{equation}
\end{defn}

Let $\Phi$, $\Phi_1$, \(\Phi_2\) \ldots be Dirichlet forms on $\Hilbert=L^2(\Polish,\mu)$. 
The notion of $\Phi$-nests (\citealp[Def.III.2.1 and Thm.III.2.11]{MaRoeckner-1992})
is crucial when articulating the extent to which the Dirichlet forms are confined to suitable regions of \(\Polish\).

\begin{rem}
 In the following we denote by \(C_0(\Polish)\) the space of continuous functions of compact support on \(\Polish\), which is typically too small to be much use if \(\Polish\) is infinite-dimensional. 
\end{rem}
\begin{defn}[\(\Phi\)-nest of closed sets]
An increasing sequence of closed sets $F_1\subseteq F_2 \subseteq \ldots$ contained in \(\Polish\) is a \emph{$\Phi$-nest} if
$$
\lim_{k\rightarrow\infty}\Capacity_\Phi(\Polish\setminus F_k)\quad =\quad0\,.
$$
\end{defn}

\begin{defn}[Regular and quasi-regular Dirichlet forms, after \citealp{Schmuland-1994}]\label{def:regularity}
The Dirichlet form \(\Phi\) is \emph{regular} if \(D(\Phi)\cap C_0(\Polish)\) is dense in \(D(\Phi)\) with respect to the inner product \(\langle h_1,h_2\rangle_{\Hilbert}+\Phi(h_1,h_2)\)
and is dense in \(C_0(\Polish)\) with respect to the uniform norm. 
It is \emph{quasi-regular} if 
\begin{enumerate}
 \item there is a \(\Phi\)-nest of compact sets;
 \item there is a subset of \(D(\Phi)\), dense with respect to the inner product \(\langle h_1,h_2\rangle_{\Hilbert}+\Phi(h_1,h_2)\) and individually \(\Phi\)-quasi-continuous,
 in the sense that (an \(\mu\)-version
 of) any \(h\) in this subset is continuous in each closed set in a \(\Phi\)-nest (perhaps depending on \(h\));
 \item there is a countable subset of members of \(D(\Phi)\) with \(\Phi\)-quasi-continuous \(\mu\)-versions \(\tilde{u}_1\), \(\tilde{u}_2\), \ldots, such that
 \(\Polish\setminus N\) is separated by \(\tilde{u}_1\), \(\tilde{u}_2\), \ldots, for a set \(N\) which can be expressed as a subset of \(\bigcap_i F_i^c\) for some \(\Phi\)-nest \(F_1\subseteq F_2\subseteq\ldots\).
\end{enumerate}

\end{defn}

\begin{rem}
We assume that $1\in D(\Phi)$ and $1\in D(\Phi_n)$ for every $n$.
Such an assumption implies that the notions of quasi-regularity and nests are equivalent to their strict versions, namely strictly quasi-regular and strict nests \citep[Thm.V.2.15]{MaRoeckner-1992}.
This simplifies the exposition as it is then possible to ignore the strict versions of the above definitions.
\end{rem}

This brief summary concludes by introducing the notion of an increasing family of closed sets which is \emph{uniformly} a \(\Phi_n\)-nest for a sequence of Dirichlet forms \(\Phi_1\), \(\Phi_2\), \ldots.
\begin{defn}[Uniform \(\{\Phi_n\}\)-nest of closed sets]\label{def:uniform-nest}
An increasing sequence of closed sets $\{F_k\}_{k=1}^\infty$ contained in \(\Polish\) is a uniform $\{\Phi_n\}$-nest if
\begin{equation}\label{eq:unif_nest}
\lim_{k\rightarrow\infty}\sup_{n\in\Numbers}\Capacity_{\Phi_n}(\Polish\setminus F_k)\quad=\quad0\,.
\end{equation}
\end{defn}
Note that \cite{Sun-1998} refers to sequences satisfying \eqref{eq:unif_nest} as \emph{$\{\Phi_n\}$-nest} 
(or \emph{strict $\{\Phi_n\}$-nest}), while we prefer the more explicit expression \emph{uniform $\{\Phi_n\}$-nest}.

\subsection{Results of the paper}\label{sec:results}
This paper applies the above notions of Dirichlet forms in the context of the MHRW framework described in Section \ref{sec:MHRW},
based on \(\Polish=\Reals^\infty\) and \(\mu=\pi^{\otimes \infty}\).
For each \(n=1,2,\ldots\), consider the MHRW $\{\textbf{X}^{(n)}(t):t=1,2,\ldots\big\}$ subject to a time-rescaling by a factor of $n$.
Via \eqref{eq:dir_form_discrete}, this motivates consideration of the following Dirichlet form: 
\begin{equation}\label{eq:def_of_phi_n}
\Phi_n (h)\quad=\quad
\frac{n}{2}\;\Expect{\left(h(\textbf{X}^{(n)}(1))-h(\textbf{X}^{(n)}(0))\right)^2}
\qquad \text{ for }h\in \Hilbert\,.
\end{equation}
(This is the Dirichlet form corresponding to the continuous-time Markov process resulting from the MHRW 
reformulated as a discrete-time Markov chain jumping at instants of an exponential clock of rate \(n\).) 
The natural candidate for a limiting Dirichlet form (as \(n\to\infty\)) is given by
\begin{equation}\label{eq:def_of_phi}
\Phi (h)=\left\{
\begin{array}{ll}
\frac{1}{2}\scale^2c(\scale)\;\mathbb{E}[|\nabla h(\X)|^2] & \text{ for } h\in \Sobolev\,,\\
\infty                                                     & \text{ for } h\notin \Sobolev\,,
\end{array}
\right.
\end{equation}
where .
Here the domain \(\Sobolev\) of \(\Phi\) is precisely the region where the first expression in \eqref{eq:def_of_phi} can be viewed as finite. 
Accordingly, set $S=W^{1,2}(\Reals^\infty,\pi^{\otimes \infty})$ to be the Sobolev space defined as the closure of $\bigcup_{N> 0}C_{0,N}^\infty(\Reals^\infty)\subset \Hilbert$ according to the norm
\begin{multline}\label{eq:sobolev_norm}
\|h\|_{\Sobolev}^2
\quad=\quad
\int_{\Reals^N}
\left(
|h(x_{1:N})|^2
+
\sum_{i=1}^N\left|\frac{\partial}{\partial x_i}h(x_{1:N})\right|^2
\right)
\pi^{\otimes N}(\d x_{1:N})\,,
\\
\quad\text{ when }
h\in C_{0,N}^\infty(\Reals^\infty),
\text{ so }h(x)={h}(x_{1:N})\,.
\end{multline}
Here $C_{0,N}^\infty(\Reals^\infty)$ is the set of infinitely differentiable functions with compact support depending only on the first $N$ components.

The gradient $\nabla h$ in \eqref{eq:def_of_phi} is then defined 
as the continuous extension to \(\Sobolev\) of the natural definition of \(\nabla\) on \(\bigcup_{N> 0}C_{0,N}^\infty(\Reals^\infty)\).
So if \(h\in \Sobolev\) then the gradient \(\nabla h\)
is a measurable function from $\Reals^\infty$ to the Hilbert sequence space $\ell^2=\{x\in\Reals^\infty\,:\,\sum_{i=1}^{\infty}x_i^2<\infty\}$, 
satisfying the following properties:
\begin{enumerate}
\item $\Expect{\langle\nabla h(\X),\nabla h(\X)\rangle_{\ell^2}}<\infty$ 
and 
\item for any $i=1,2,\dots$, it is the case that
$\langle\nabla h(x),e^{(i)}\rangle_{\ell^2}=\frac{\partial}{\partial x_i}h(x)$ for $\pi^{\otimes\infty}$-almost every $x$, where $e^{(i)}\in\Reals^\infty$ with  $e^{(i)}_j=\delta_{i,j}$ the Kronecker delta.
\end{enumerate}
\citet[Equation (1.12) and Remark 1.12]{AlbeverioRoeckner-1989} show that such a function exists and is $\pi^{\otimes\infty}$-almost everywhere unique.
 
The Dirichlet form in \eqref{eq:def_of_phi} corresponds to an infinite-dimensional continuous-time Markov process $\{\textbf{X}^{\infty}(t):t\geq0\}$
 with state-space $(\Reals^\infty,\pi^{\otimes \infty})$,
for which each component evolves according to an independent copy of a specific diffusion on $\Reals$ with invariant measure $\pi$ and speed given by a specified function of $\scale$.
Some care is needed to establish a rigorous proof that such a process has associated Dirichlet form given in the form of \eqref{eq:def_of_phi}.
\citet[Equations (2.8)-(2.11)]{AlbeverioRoeckner-1989} give sufficient conditions on $\Phi$ for the corresponding Markov process to be well defined.
In Appendix \ref{appendix:capacity} we prove that these conditions hold for $\Phi$ as specified in \eqref{eq:def_of_phi}.
A simple computation with Gaussian densities shows that
\[
 c(\scale)\quad=\quad\Expect{1\wedge\exp\left(\mathcal{N}(-\tfrac{\scale^2}{2}\I,\scale^2\, \I)\right)}
 \quad=\quad
 2 F\left(-\half\scale \sqrt{\I}\right)\,,
\]
where \(F\) is the standard normal distribution function:
the limiting Dirichlet form \eqref{eq:def_of_phi} therefore agrees with the Dirichlet form for the limiting diffusion given by \citet{RobertsGelmanGilks-1997} as described in Theorem \ref{thm:RGG}.

The key result of this paper is that Mosco convergence of \(\Phi_n\) to \(\Phi\) holds under the relatively weak conditions on the potential \(\phi\) given at and above \eqref{eq:holder}
(finite Fisher information, and combined local H\"older and growth condition for the derivative of the potential \(\phi\)).
\begin{thm}\label{thm:conv_of_dir_forms}
For $\Phi_n$ and $\Phi$ defined by \eqref{eq:def_of_phi_n} and \eqref{eq:def_of_phi}, using a potential \(\phi\) satisfying \eqref{eq:holder}
together with finite Fisher information $\I=\int_{-\infty}^{\infty}|\phi'(x)|^2 \, f(x)\d x<\infty$,
it is the case that Mosco convergence $\Phi_n\stackrel{M}\rightarrow\Phi$ holds.
\end{thm}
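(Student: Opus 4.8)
The plan is to verify conditions (M1) and (M2) of Definition~\ref{defn:Mosco_convergence} directly, concentrating essentially all of the analytic effort in a single asymptotic computation for smooth cylinder test functions. Let $P_n$ denote the one-step transition operator of the Markov chain $\{\textbf{X}^{(n)}(t)\}$; since $P_n$ is self-adjoint on $\Hilbert$ by reversibility, \eqref{eq:dir_form_discrete} gives $\Phi_n(h)=n\langle(I-P_n)h,h\rangle_\Hilbert$ and $\Phi_n(h,v)=n\langle(I-P_n)h,v\rangle_\Hilbert=n\langle h,(I-P_n)v\rangle_\Hilbert$. The crucial lemma is that for every $g\in C_{0,N}^\infty(\Reals^\infty)$,
\[
n(I-P_n)g\;\longrightarrow\;-\L g\;:=\;-\half\tau^2 c(\tau)\sum_{i=1}^{N}\Big(\tfrac{\partial^2}{\partial x_i^2}g+\phi'(x_i)\tfrac{\partial}{\partial x_i}g\Big)
\qquad\text{strongly in }\Hilbert\,,
\]
so in particular $\Phi_n(g)=\langle n(I-P_n)g,g\rangle_\Hilbert\to\langle -\L g,g\rangle_\Hilbert=\Phi(g)$. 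Note that $-\L g\in\Hilbert$ precisely because finite Fisher information \eqref{eq:fisher} puts $\phi'$ in $L^2(\pi)$; this is where that hypothesis enters.

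Granting the lemma, (M2) is routine. If $h\notin\Sobolev$ then $\Phi(h)=\infty$ and one may take the constant sequence $h_n=h$. If $h\in\Sobolev$, which by definition is the $\|\cdot\|_\Sobolev$-closure of $\bigcup_N C_{0,N}^\infty(\Reals^\infty)$, choose cylinder functions $h^{(m)}\to h$ in $\|\cdot\|_\Sobolev$; then $\Phi_n(h^{(m)})\to\Phi(h^{(m)})$ for each fixed $m$, while $\Phi(h^{(m)})\to\Phi(h)$ and $\|h^{(m)}-h\|_\Hilbert\to0$ (since $\Phi^{1/2}$ is Lipschitz with respect to $\|\cdot\|_\Sobolev$). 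A standard diagonal extraction $m(n)\to\infty$ then produces a recovery sequence $h_n:=h^{(m(n))}$ with $h_n\to h$ in $\Hilbert$ and $\limsup_n\Phi_n(h_n)\le\Phi(h)$.

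For (M1), let $h_n\stackrel{w}\rightarrow h$ in $\Hilbert$. For any cylinder $g$ and any $n$, non-negativity of $\Phi_n$ gives $\Phi_n(h_n)\ge 2\Phi_n(h_n,g)-\Phi_n(g)$. A weakly convergent sequence is bounded, so pairing $h_n$ against the strongly convergent $n(I-P_n)g$ gives $\Phi_n(h_n,g)=\langle h_n,n(I-P_n)g\rangle_\Hilbert\to\langle h,-\L g\rangle_\Hilbert$, while $\Phi_n(g)\to\Phi(g)$; hence $\liminf_n\Phi_n(h_n)\ge 2\langle h,-\L g\rangle_\Hilbert-\Phi(g)$ for every cylinder $g$. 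It remains to show that $\sup_g\big(2\langle h,-\L g\rangle_\Hilbert-\Phi(g)\big)$, the supremum over cylinder $g$, equals $\Phi(h)$. When $h\in\Sobolev$, integration by parts (the defining identity for the weak gradient $\nabla h$) yields $\langle h,-\L g\rangle_\Hilbert=\half\tau^2 c(\tau)\,\Expect{\langle\nabla h,\nabla g\rangle_{\ell^2}}$, so $2\langle h,-\L g\rangle_\Hilbert-\Phi(g)=\Phi(h)-\half\tau^2 c(\tau)\,\Expect{|\nabla h-\nabla g|^2_{\ell^2}}$ has supremum $\Phi(h)$, attained in the limit $\nabla g\to\nabla h$. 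When $h\notin\Sobolev$, the supremum is $+\infty$: otherwise $g\mapsto\langle h,-\L g\rangle_\Hilbert$ would be bounded on cylinder functions by a constant multiple of $\Phi(g)^{1/2}$ and, since it annihilates constants and hence factors through $\nabla g$, would extend to a bounded linear functional of $\nabla g\in L^2(\Reals^\infty,\pi^{\otimes\infty};\ell^2)$; Riesz representation together with the characterisation of $\Sobolev$ in \citet[Equation (1.12) and Remark 1.12]{AlbeverioRoeckner-1989} would then force $h\in\Sobolev$, a contradiction. Taking the supremum over $g$ completes (M1).

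The real work, and the anticipated main obstacle, is the lemma. Since $g$ depends only on coordinates $1,\dots,N$ and $n>N$, one has $n(I-P_n)g(x)=-n\,\Expect{A_n\big(g(x_{1:N}+\tfrac{\tau}{\sqrt n}W_{1:N})-g(x_{1:N})\big)}$, and a second-order Taylor expansion of $g$ reduces matters to the large-$n$ behaviour of $\Expect{A_n\mid x,W_{1:N}}$ and of the correlations of the acceptance indicator $A_n$ with each $W_i$ and with each $W_iW_j$. Writing $R_n=\sum_{i=1}^n[\phi(x_i+\tfrac{\tau}{\sqrt n}W_i)-\phi(x_i)]$ and splitting off the first $N$ summands, the delicate ingredients are: (i) an $L^2$-estimate for the remainders $\phi(x+\tfrac{\tau}{\sqrt n}w)-\phi(x)-\tfrac{\tau}{\sqrt n}\phi'(x)w$, which is exactly where the combined local H\"older and growth condition \eqref{eq:holder} enters (the ``$L^2$ sense'' referred to after \eqref{eq:holder}); (ii) a central limit theorem showing $\sum_{i=N+1}^{n}[\phi(X_i+\tfrac{\tau}{\sqrt n}W_i)-\phi(X_i)]\Rightarrow\mathcal N(-\half\tau^2\I,\tau^2\I)$, asymptotically independent of $(X_{1:N},W_{1:N})$, with the exact identity $\Expect{e^{\phi(X+\tfrac{\tau}{\sqrt n}W)-\phi(X)}}=1$ pinning down the mean and a Lindeberg argument controlling the remainder; (iii) the Gaussian computation $\Expect{e^{Z}\1\{Z<0\}}=F(-\half\tau\sqrt{\I})=\half c(\tau)$ for $Z\sim\mathcal N(-\half\tau^2\I,\tau^2\I)$, which produces the drift term $\phi'(x_i)\tfrac{\partial}{\partial x_i}g$ in $\L g$; and (iv) upgrading convergence in distribution to genuine $L^2(\pi^{\otimes\infty})$-convergence of $n(I-P_n)g$, using the compact support of $g$ and the Gaussian tails of the $W_i$ to supply dominating functions and uniform integrability. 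The surrounding functional-analytic argument is soft by comparison.
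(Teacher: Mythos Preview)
Your approach is correct in outline and takes a genuinely different route from the paper. Both arguments share the almost-sure CLT for the log acceptance ratio (the paper's Lemma~\ref{lemma:acceptance_prob}) and the same diagonal extraction for (M2). The essential divergence is in (M1).

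The paper never proves that \(n(I-P_n)g\) converges in \(\Hilbert\). Instead it writes \(\Phi_n(h_n)=\|\Psi_n(h_n)\|^2_{L^2_{(\X,\W,U)}}\) and applies Cauchy--Schwarz in the \emph{enlarged} space against test functions of the form \(\xi(\XN,\WN)\,\1(U<a(\Xn,\Wn))\); the limit of the resulting scalar inner product is handled by a reversibility/change-of-variables computation (Lemma~\ref{lemma:weak_convergence}), after which the supremum over \(\xi\) is identified with \(\sqrt{\Phi(h)}\) by duality. Your route is the more classical variational one: establish strong \(L^2\) convergence of the generator on cylinder functions, then use \(\Phi_n(h_n)\ge 2\langle h_n,n(I-P_n)g\rangle-\Phi_n(g)\) together with the weak--strong pairing. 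This is cleaner once the key lemma is in hand, and it highlights that \(L^2\) (rather than uniform) generator convergence already suffices under \eqref{eq:fisher}--\eqref{eq:holder}. The paper's approach, by contrast, deliberately sidesteps generator convergence altogether, which is precisely its methodological point and may transfer more readily to settings where direct control of \(n(I-P_n)g\) is awkward.

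Two places in your sketch deserve tightening. First, the drift term: the naive bound \(|\Expect{A_nW_i}|\le\Expect{|W_i|}\) blows up after multiplication by \(\sqrt n\). You need either Gaussian integration by parts,
\[
\sqrt{n}\,\tau\,\Expect{(1\wedge e^{R_n})W_i}\;=\;\tau^2\,\Expect{\1(R_n<0)e^{R_n}\,\phi'\!\big(x_i+\tfrac{\tau}{\sqrt n}W_i\big)},
\]
or the splitting \(|1\wedge(ab)-1\wedge b|\le|a-1|\) with \(a=e^{R_n^{(i)}}\), to see that this quantity is bounded uniformly by \(\tau^2\big(|\phi'(x_i)|+O(n^{-\gamma/2})\big)\) on the compact \(x\)-support of \(g\); together with Lemma~\ref{lemma:acceptance_prob} this gives the claimed \(L^2\) convergence by dominated convergence. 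Second, for \(h\notin\Sobolev\) your Riesz argument needs the identification of \(\Sobolev\) with the space of \(h\in\Hilbert\) admitting a weak gradient in the integration-by-parts sense; the paper handles this by working with the full Sobolev norm (see \eqref{eq:diverging_case}) so that Riesz applies directly on \(\Sobolev\) and density of the range of \(I-\L\) forces the representative to equal \(h\). Your version, bounding by \(\Phi(g)^{1/2}\) alone, requires the additional ``\(H=W\)'' step; it is true here but should be justified rather than attributed to \citet{AlbeverioRoeckner-1989}.
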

\begin{proof}
It suffices to establish both (M1) and (M2) of Definition \ref{defn:Mosco_convergence} above.
Dealing with these in reverse order (so as to dispose of the easiest case first),
Property (M1) is established in Section \ref{sec:mosco_1} below, and
Property (M2) is established in Section \ref{sec:mosco_2}.
\end{proof}

Mosco convergence of forms immediately implies the uniform convergence of the associated semigroups such as $\{T_t:t\geq0\}$,
and hence (probabilistic) vague convergence of the finite-dimensional distributions of the corresponding process $\{\textbf{X}^{(n)}_t: t\geq0\}$ (finite dimensional,
in the sense of joint distribution of evaluations of the processes at a finite collection of time points).
\begin{cor}
Under the assumptions of Theorem \ref{thm:conv_of_dir_forms}, let $\{\textbf{X}^{\infty}(t):t\geq0\}$ and $\{\textbf{X}^{(n)}(t): t\geq0\}$ be the Markov processes associated with $\Phi$ and $\Phi_n$ and let $\{T_t: t\geq0\}$ and $\{T^{(n)}_t:t\geq0\}$ be their associated semigroups.
Then $\Phi_n\stackrel{M}\rightarrow\Phi$ implies the uniform convergence of semigroups in the strong  operator topology: for any $t_0>0$ and $h\in \Hilbert$
\begin{equation*}
\qquad\qquad\sup_{0<t\leq t_0} \big\|T^{(n)}_th-T_th\big\|_{\Hilbert}\longrightarrow 0
\qquad\hbox{as }n\to\infty\,.
\end{equation*}
\end{cor}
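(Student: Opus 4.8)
The plan is to read this off from Mosco's equivalence theorem (Theorem~\ref{thm:convergence_of_semigroups}) once Mosco convergence is in hand. First I would invoke Theorem~\ref{thm:conv_of_dir_forms}: under the standing hypotheses on the potential $\phi$ (finite Fisher information $\I<\infty$ together with the combined growth / local H\"older condition \eqref{eq:holder}) we have $\Phi_n\stackrel{M}{\rightarrow}\Phi$. It then remains only to confirm that the objects to which Theorem~\ref{thm:convergence_of_semigroups} is applied genuinely satisfy its hypotheses, namely that $\Phi$ and each $\Phi_n$ are Dirichlet forms whose associated $L^2$-semigroups coincide with the semigroups $\{T_t:t\ge0\}$ and $\{T^{(n)}_t:t\ge0\}$ of the processes $\{\textbf{X}^{\infty}(t)\}$ and $\{\textbf{X}^{(n)}(t)\}$ named in the statement.

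For the forms $\Phi_n$ this is automatic. By construction \eqref{eq:def_of_phi_n} is the Dirichlet form \eqref{eq:dir_form_discrete} attached to the MHRW chain --- equivalently, to the continuous-time process jumping at the instants of a rate-$n$ exponential clock --- which is reversible with respect to $\pi^{\otimes\infty}$; hence $\Phi_n$ is closed with full domain $D(\Phi_n)=\Hilbert$, and it is Markovian because the pointwise inequality $\bigl(\tilde h(\textbf{x})-\tilde h(\textbf{y})\bigr)^2\le\bigl(h(\textbf{x})-h(\textbf{y})\bigr)^2$ for the unit normal contraction $\tilde h=(h\vee 0)\wedge 1$ forces $\Phi_n(\tilde h)\le\Phi_n(h)$. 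For $\Phi$, closedness holds because its domain $\Sobolev=W^{1,2}(\Reals^\infty,\pi^{\otimes\infty})$ is, by definition, complete under the Sobolev norm \eqref{eq:sobolev_norm}, which is equivalent to the form norm $\Phi(h)+\|h\|_{\Hilbert}^2$ (the two differing only by the fixed positive factor $\tfrac12\scale^2c(\scale)$ on the gradient term, with $c(\scale)=2F(-\tfrac12\scale\sqrt{\I})\in(0,1]$); and $\Phi$ is Markovian because the contraction property $|\nabla\tilde h|\le|\nabla h|$ holds $\pi^{\otimes\infty}$-almost everywhere for the same normal contraction, which gives $\Phi(\tilde h)\le\Phi(h)$. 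That $\Phi$ really is the Dirichlet form of a well-defined (conservative, quasi-regular) Markov process $\{\textbf{X}^{\infty}(t):t\ge0\}$ on $(\Reals^\infty,\pi^{\otimes\infty})$ --- the one point here requiring more than routine checking --- is exactly the content of Appendix~\ref{appendix:capacity}, where the Albeverio--R\"ockner sufficient conditions are verified; the process $\{\textbf{X}^{(n)}(t):t\ge0\}$ is associated with $\Phi_n$ by construction, and in both cases the associated $L^2$-semigroup, given by $(T_th)(x)=\Expect{h(\textbf{X}^x(t))}$, is the strongly continuous contraction semigroup generated by the form (spectral theorem for the non-negative self-adjoint operator attached to a closed symmetric form).

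With $\Phi$, $\Phi_1,\Phi_2,\ldots$ thus recognized as Dirichlet forms and $\{T_t\}$, $\{T^{(n)}_t\}$ as their semigroups, Theorem~\ref{thm:convergence_of_semigroups} applies directly and, combined with the Mosco convergence supplied by Theorem~\ref{thm:conv_of_dir_forms}, delivers
\[
\sup_{0<t\le t_0}\big\|T^{(n)}_t h-T_t h\big\|_{\Hilbert}\;\longrightarrow\;0\qquad\text{as }n\to\infty
\]
for every $t_0>0$ and every $h\in\Hilbert$, which is the assertion. The corollary is therefore essentially a packaging step: all of the analytic difficulty sits upstream, in Theorem~\ref{thm:conv_of_dir_forms} (Mosco convergence under the weak regularity conditions) and in Appendix~\ref{appendix:capacity} (existence and quasi-regularity of the limiting form and its process), so I do not anticipate any genuine obstacle at this stage.
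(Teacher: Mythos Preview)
Your proposal is correct and follows the same route as the paper, whose proof is the single line ``Follows from Theorems \ref{thm:convergence_of_semigroups} and \ref{thm:conv_of_dir_forms}.'' You simply unpack the hypothesis-checking (that $\Phi$ and the $\Phi_n$ are Dirichlet forms with the stated associated semigroups, with Appendix~\ref{appendix:capacity} supplying the nontrivial existence/quasi-regularity for $\Phi$), which the paper leaves implicit.
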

\begin{rem}
 \citet{Kolesnikov-2006} notes that vague convergence holds for finite-dimensional distributions of the corresponding Markov processes. 
 Note however that the above Corollary establishes \(L^2\) convergence of marginal distributions, which in some respects is much stronger 
 (\eg~it 
 controls 
 some
 unbounded test functions).
\end{rem}
\begin{proof}
Follows from Theorems \ref{thm:convergence_of_semigroups} and \ref{thm:conv_of_dir_forms}.
\end{proof}

These results lead to optimal scaling arguments for finite-dimensional distributions of the Metropolis-Hastings random walk sampler,
directly following the final argument of \citet{RobertsGelmanGilks-1997}. 
Fastest asymptotic exploration of the state space
is obtained exactly by optimizing the limiting process (governed by the Dirichlet form given in \eqref{eq:def_of_phi}).
This limiting Dirichlet form depends on \(\scale\) only through a multiplicative factor \(\scale^2c(\scale)\) which measures the speed at which the limiting process evolves;
therefore exploration occurs as fast as possible exactly when \(\scale^2c(\scale)=\Expect{1\wedge\exp\left(\mathcal{N}(-\tfrac{\scale^2}{2}\I,\scale^2\, \I)\right)}\) is maximized,
and at this maximum the acceptance probability for jumps is given by the famous ``Goldilocks constant'' \(0.234\) obtained by \citet{RobertsGelmanGilks-1997}.
See \citet{RobertsRosenthal-2016} for more details on the connection between asymptotic analysis through scaling limits and the algorithmic complexity of MCMC algorithms.

 Good practice in Markov-chain Monte Carlo involves estimators which make  use of entire sample paths (deleting the initial ``burn-in'' periods),
and so it is relevant to consider (probabilistic) weak convergence of the distribution of the entire sample path of \(\{\textbf{X}^{(n)}(t): t\geq0\}\) to that of \(\{\textbf{X}^\infty(t):t\geq0\}\).
\citet{Sun-1998} provides sufficient conditions to prove this 
using Dirichlet form theory.


\begin{thm}\label{thm:wei}\citep[Theorem 1]{Sun-1998}
Let $\Phi$ and $\Phi_n$ (for \(n=1,2,\dots\)) be quasi-regular Dirichlet forms on $\Hilbert=L^2(\Polish,\mu)$,
and let 
$\{\textbf{X}^\infty (t):t\geq0\}$ and
$\{\textbf{X}^{(n)}(t): t\geq0\}$ 
be their associated Markov processes, with random starting points $\textbf{X}^\infty(0)$ and $\textbf{X}^{(n)}(0)$ all distributed as \(\mu\). 
Suppose that
\begin{enumerate}
\item[(S1)] $\Phi_n\stackrel{M}\rightarrow\Phi$ (Definition \ref{defn:Mosco_convergence}), and
moreover a stronger form of condition (M2) of Definition \ref{defn:Mosco_convergence} applies; \(\limsup_{n\to\infty}\Phi_n(u)\leq\Phi(u)\)
(\ie~the sequence of \(u_n\) in (M2) may all be chosen equal to \(u\)).
\item[(S2)] Any $\Phi$-nest of compact sets is also a uniform $\{\Phi_n\}$-nest (Definition \ref{def:uniform-nest}).
\end{enumerate}
Then $\textbf{X}^{(n)}$ converges to $\textbf{X}^\infty$ in the sense of (probabilistic) weak convergence.
\end{thm}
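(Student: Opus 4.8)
Since Theorem~\ref{thm:wei} is quoted verbatim from \citet[Theorem~1]{Sun-1998}, we do not reproduce the proof, but the strategy is worth recording. The plan is to establish three things: that the processes are well defined, that their finite-dimensional distributions converge, and that the sequence of path laws is tight on the relevant Skorokhod space.

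First one notes that quasi-regularity of $\Phi$ and of each $\Phi_n$ guarantees, via \citet{MaRoeckner-1992}, associated $\mu$-tight special standard Markov processes $\{\textbf{X}^\infty(t):t\geq0\}$ and $\{\textbf{X}^{(n)}(t):t\geq0\}$, started from $\mu$, with right-continuous sample paths off an exceptional ($\Phi$- resp.\ $\Phi_n$-polar) set; this also fixes the path space in which weak convergence is understood. Next, hypothesis (S1) and Theorem~\ref{thm:convergence_of_semigroups} give convergence of the semigroups in the strong operator topology, uniformly for $t$ in compact intervals; iterating over finitely many times $0\leq t_1<\dots<t_m$ and using that $\textbf{X}^{(n)}(0)\sim\mu$, this yields convergence of the joint law of $(\textbf{X}^{(n)}(t_1),\dots,\textbf{X}^{(n)}(t_m))$ in $L^2$, hence in the probabilistic weak sense---reversibility and stationarity are exactly what keep this step clean, since one never has to evaluate a non-equilibrium transition operator pointwise.

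The substance of the argument is tightness of $\{\mathbb{P}\circ(\textbf{X}^{(n)})^{-1}\}_n$ over $[0,t_0]$. Given $\varepsilon>0$, condition (S2) supplies a $\Phi$-nest of compact sets $\{F_k\}$ which is also a uniform $\{\Phi_n\}$-nest, so that $\sup_n\Capacity_{\Phi_n}(\Polish\setminus F_k)\to0$; the standard capacity bound for a stationary symmetric Markov process estimates the probability of exiting $F_k$ before time $t_0$ by a constant (depending on $t_0$) times $\Capacity_{\Phi_n}(\Polish\setminus F_k)$, so a single large $k$ confines every $\textbf{X}^{(n)}$ to a fixed compact set with probability at least $1-\varepsilon$, uniformly in $n$. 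Uniform control of the path fluctuations then comes from the forward--backward (Lyons--Zheng) martingale decomposition of a stationary symmetric Markov process: for $h$ in the common domain, $h(\textbf{X}^{(n)}(t))-h(\textbf{X}^{(n)}(0))$ splits into a forward and a time-reversed backward martingale whose quadratic variations are governed by $\Phi_n(h)$, and the strengthened form of (M2) inside (S1)---the recovery sequence may be taken constant, so $\limsup_n\Phi_n(h)\leq\Phi(h)$---makes this bound uniform in $n$ along a dense family of $h$; an Aldous--Rebolledo argument upgrades this to tightness. Combining tightness with the already-identified finite-dimensional limits forces every subsequential weak limit of $\textbf{X}^{(n)}$ to coincide with $\textbf{X}^\infty$, which is the assertion.

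The hard part is this last (tightness) step, and within it the passage from $L^2$-control of a single transition to uniform-in-$n$ control of the whole trajectory: that is exactly where quasi-regularity, the capacity/nest apparatus, and the constant-recovery-sequence strengthening of (M2) are indispensable, and is why those hypotheses appear in the statement.
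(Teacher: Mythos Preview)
The paper does not prove Theorem~\ref{thm:wei} at all: it is simply quoted from \citet[Theorem~1]{Sun-1998} and then applied as a black box in the proof of Theorem~\ref{thm:weak_conv_of_dir_forms}. Your proposal correctly recognises this and offers, in lieu of a proof, a summary of the strategy in Sun's paper. That is a sensible thing to do, and your outline (existence via quasi-regularity, finite-dimensional convergence from Mosco convergence and Theorem~\ref{thm:convergence_of_semigroups}, tightness from the uniform nest condition combined with a Lyons--Zheng decomposition and the constant-recovery strengthening of (M2)) is a faithful high-level account of how such arguments proceed. There is nothing to compare against in the present paper, since no proof is given here; your sketch goes beyond what the paper provides and is consistent with the structure of Sun's argument.
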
 
Note that the topology of \(\Polish\) only plays a role in formulating closedness and compactness of the sets $F_1$, $F_2$, \ldots.
The previous result, together with results from Section \ref{sec:proof_of_mosco}, can then be used to prove weak convergence of the process of interest,
so long as we strengthen the regularity required of the density \(f\) (and thence of the potential \(\phi\)).
\begin{thm}\label{thm:weak_conv_of_dir_forms}
Let $\{\textbf{X}^{(n)}(t): t\geq0\}$  and $\{\textbf{X}^\infty(t):t\geq0\}$ be the Markov processes associated with $\Phi_n$ and $\Phi$ defined by \eqref{eq:def_of_phi_n} and \eqref{eq:def_of_phi} (see Sections \ref{sec:MHRW} and \ref{sec:results}). Suppose that the potential \(\phi\) has Lipschitz-continuous first derivative, meaning that $|\phi'(x+v)-\phi'(x)|
<k|v|$ for a fixed $k$ and for all $x,v\in \Reals$, and finite Fisher information, meaning that $\I=\int_{-\infty}^{\infty}|\phi'(x)|^2 \, f(x)\d x<\infty$.
Then $\textbf{X}^{(n)}$ converges to $\textbf{X}^\infty$ in the sense of (probabilistic) weak convergence.
\end{thm}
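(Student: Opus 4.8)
The plan is to derive the statement from Theorem~\ref{thm:wei} of \citet{Sun-1998}: it suffices to verify that $\Phi_n$ and $\Phi$ from \eqref{eq:def_of_phi_n} and \eqref{eq:def_of_phi} are quasi-regular Dirichlet forms satisfying (S1) and (S2). Three of the ingredients are almost immediate. First, a Lipschitz bound on $\phi'$ implies the combined local H\"older and growth condition \eqref{eq:holder} (for any $\gamma\in(0,1)$, $\alpha>1$ one has $|v|\le|v|^\gamma$ when $|v|\le1$ and $|v|\le|v|^\alpha$ when $|v|\ge1$), so Theorem~\ref{thm:conv_of_dir_forms} already provides the Mosco convergence $\Phi_n\stackrel{M}\rightarrow\Phi$ needed in (S1). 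Second, the existence of $\{\textbf{X}^\infty(t):t\ge0\}$ and the quasi-regularity of $\Phi$ are supplied by Appendix~\ref{appendix:capacity}, where a \(\Phi\)-nest of compact sets is built from products of compact coordinate intervals. Third, quasi-regularity of each $\Phi_n$ is easier still: $\Phi_n$ is a bounded form whose associated process moves only the first $n$ coordinates while the remaining ones stay at equilibrium, so a compact exhaustion of $\Reals^n$ together with a product exhaustion of the frozen tail yields the required \(\Phi_n\)-nest. It remains to establish the strengthened form of (M2) in (S1) and the uniform-nest condition (S2).

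For the strengthened (M2) one must show that the recovery sequence may be taken constant, $\limsup_n\Phi_n(u)\le\Phi(u)$. I would prove this first for $u$ in the smooth cylinder core $\bigcup_{N>0}C_{0,N}^\infty(\Reals^\infty)$ and then pass to a general $u\in\Sobolev$ using closedness of $\Phi$ and the lower-semicontinuity (M1), which forces the core-level limit to equal $\Phi(u)$. For a fixed smooth cylinder $u$ one expands $u(\textbf{X}^{(n)}(1))-u(\textbf{X}^{(n)}(0))$ to second order along the proposed move; the Lipschitz hypothesis makes the remainders here, and those in the parallel expansion of the log-acceptance ratio $\sum_{i\le n}(\phi(X_i+\tfrac{\scale}{\sqrt n}W_i)-\phi(X_i))$, quadratically controlled, so $\lim_n\tfrac n2\Expect{(u(\textbf{X}^{(n)}(1))-u(\textbf{X}^{(n)}(0)))^2}$ can be evaluated directly along the single function $u$, the law of large numbers producing the factor $c(\scale)$ exactly as in the proof of Theorem~\ref{thm:conv_of_dir_forms}. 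This is where the passage from \eqref{eq:holder} to a genuine Lipschitz condition is used.

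For (S2) let $F_1\subseteq F_2\subseteq\cdots$ be an arbitrary \(\Phi\)-nest of compact sets; the goal is $\sup_n\Capacity_{\Phi_n}(\Reals^\infty\setminus F_k)\to0$ as $k\to\infty$. The natural route is to choose, for each $k$, a near-optimal $h_k\in\Sobolev$ with $h_k\ge1$ on an open neighbourhood of $\Reals^\infty\setminus F_k$ and $\|h_k\|_{\Hilbert}^2+\Phi(h_k)$ close to $\Capacity_\Phi(\Reals^\infty\setminus F_k)$, and then to bound $\Phi_n(h_k)$ uniformly in $n$. Writing the increment of $h_k$ under the proposed move as an integral of $\nabla h_k$ along the connecting segment and discarding the acceptance factor ($a\le1$), one obtains an estimate of the form $\Phi_n(h_k)\lesssim\Expect{|\nabla h_k|^2}+(\text{cross terms})$, in which the factor $n$ in $\Phi_n$ cancels the $1/n$ of the proposal variance while the mean-zero independence of the $W_i$ keeps the cross terms of order $o(1)$ rather than order $n$. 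To convert this into a capacity statement one reduces a general compact \(\Phi\)-nest to a product-structured exhaustion $F_k\subseteq\prod_i[-r_{k,i},r_{k,i}]$ and uses cut-offs of the form $\min\{1,\sum_i\psi_{k,i}(x_i)\}$, for which the $\Phi$-energy, the $L^2$-norm and the $\Phi_n$-cross-terms are all dominated by $\bigl(\sum_i\int|\psi_{k,i}'(x)|f(x)\,\d x\bigr)^2$, hence uniformly small in $n$ once $k$ is large. Feeding the resulting bound into Theorem~\ref{thm:wei} yields the asserted (probabilistic) weak convergence.

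I expect (S2) to be the main obstacle. Mosco convergence and the strengthened (M2) are refinements of the estimates already carried out in Section~\ref{sec:proof_of_mosco}, whereas (S2) demands a capacity comparison that is genuinely uniform over the whole sequence $\{\Phi_n\}$ and valid for an arbitrary \(\Phi\)-nest of compacts. The delicate points are: (i) preventing the many-coordinate structure of the Metropolis proposal from inflating the $\Phi_n$-energy of a cut-off relative to its diffusion energy, which requires the cancellation of cross terms and careful bookkeeping of the $O(1/\sqrt n)$ Taylor corrections uniformly in both $n$ and $k$; and (ii) the reduction of an arbitrary compact nest to one adapted to the product structure of $(\Reals^\infty,\pi^{\otimes\infty})$, together with the fact that capacity is defined through $\mu$-a.e.\ domination and quasi-continuous versions, so the truncation and smoothing used to place $h_k$ in the core must not destroy the defining inequality near $\Reals^\infty\setminus F_k$.
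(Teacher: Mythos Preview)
Your overall strategy---invoke Theorem~\ref{thm:wei} and verify (S1) and (S2)---matches the paper exactly, and your identification of the strengthened (M2) and the uniform-nest condition as the two nontrivial tasks is correct. But the paper's execution is quite different, and the difference matters: both (S1) and (S2) are obtained in one stroke from a single domination inequality (Lemma~\ref{lemma:capacity_bound}), namely that there is a constant \(C\) independent of \(n\) with \(\Phi_n(h)\le C(\|h\|_\Hilbert^2+\Phi(h))\) for every \(h\in\Hilbert\). Once this is in hand, (S2) follows for an \emph{arbitrary} \(\Phi\)-nest with no reduction to product structure: any near-optimizer \(u_k\) for \(\Capacity_\Phi(\Polish\setminus F_k)\) automatically witnesses \(\sup_n\Capacity_{\Phi_n}(\Polish\setminus F_k)\le \|u_k\|_\Hilbert + C(\|u_k\|_\Hilbert^2+\Phi(u_k))\to0\). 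The strengthened (M2) likewise follows because the recovery sequence \(u_n\) built in Section~\ref{sec:mosco_2} satisfies \(\Phi(u-u_n)\to0\), hence \(\Phi_n(u-u_n)\to0\) by the domination bound, and bilinearity plus Cauchy--Schwarz give \(\limsup\Phi_n(u)\le\Phi(u)\).

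The route you sketch has genuine gaps at both points, and they are the same gap. For (S2), discarding the acceptance factor and writing the increment as \(\int_0^1\nabla_{1:n}h(\X_u)^T\Wn\,\d u\) with \(\X_u=\Xn+u\tfrac{\scale}{\sqrt n}\Wn\) leaves you evaluating \(\nabla h\) at a point whose law is \emph{not} \(\pi^{\otimes n}\); the log of the Radon--Nikodym factor back to \(\pi^{\otimes n}\) contains the first-order term \(-u\tfrac{\scale}{\sqrt n}\sum_i W_i\phi'(X_i)\), which is \(O_P(1)\), not \(o(1)\), so your ``cross terms'' do not vanish for a generic \(h\in\Sobolev\) (where \(\nabla h(\X_u)\) depends on all of \(\Wn\)). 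The paper's device is precisely the opposite of discarding \(a\): after the change of measure, \emph{retaining} \(a\) produces a minimum of two expressions whose first-order parts are \(-u\) and \(+(1-u)\) times that same sum, so one of them is always nonpositive and the minimum is bounded by the second-order remainder alone, which the Lipschitz hypothesis controls by \(k\scale^2 n^{-1}\sum_i W_i^2\). A Chernoff bound on \(\sum_i W_i^2>(1+\varepsilon)n\) handles the complementary event. This is exactly where, and how, the Lipschitz condition enters. For (S1), your proposed extension from smooth cylinders to general \(u\in\Sobolev\) ``using closedness of \(\Phi\) and (M1)'' does not close: (M1) supplies a \emph{lower} bound \(\liminf\Phi_n(u)\ge\Phi(u)\), which is the wrong direction, and closedness of \(\Phi\) says nothing about \(\Phi_n\). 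What is needed to bridge from \(u_k\) to \(u\) is again a uniform control of \(\Phi_n(u-u_k)\), i.e.\ the same domination lemma. Finally, the reduction of an arbitrary compact \(\Phi\)-nest to a product-structured one that you flag in point~(ii) is both unnecessary (once Lemma~\ref{lemma:capacity_bound} is available) and not obviously valid as stated.
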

\begin{rem}
Lipschitz continuity of $\phi'$ is required in order to allow use of Lemma \ref{lemma:capacity_bound} from Section \ref{sec:weak} below.
\end{rem}
\begin{proof}
The result follows by proving conditions (S1) and (S2) of Theorem \ref{thm:wei}.
Both conditions can be deduced from Theorem \ref{thm:conv_of_dir_forms} and Lemma \ref{lemma:capacity_bound} from Section \ref{sec:proof_of_mosco}, as follows.

First consider (S1).
Theorem \ref{thm:conv_of_dir_forms} guarantees $\Phi_n\stackrel{M}\rightarrow\Phi$ and
therefore it suffices to prove 
\[
\limsup_{n\to\infty}\Phi_n(u)\quad\leq\quad \Phi(u)\qquad \text{ for every }u\in\Hilbert\,.
\] 
This holds trivially if $\Phi(u)=\infty$, so suppose $\Phi(u)<\infty$.
Since $\Phi_n\stackrel{M}\rightarrow\Phi$, there exists a sequence $\{u_n\}\subset\Hilbert$ such that $u_n\rightarrow u$ in $\Hilbert$ and  \(\limsup_{n\to\infty}\Phi_n(u_n)\leq\Phi(u)\).
Moreover, using the construction described in Section \ref{sec:mosco_2}, such a sequence can be chosen such that $\Phi(u-u_n)\rightarrow 0$.
Then Lemma \ref{lemma:capacity_bound} of Section \ref{sec:weak} implies that
\(\Phi_n(u-u_n)\;\rightarrow\;0\),
because
\begin{equation}\label{eq:phi_n_to_zero}
\Phi_n(u-u_n)
\quad\leq\quad
c(\|u-u_n\|_{\Hilbert}+\Phi(u-u_n))
\quad\rightarrow\quad
0\,.
\end{equation}
Bilinearity of 
$$
\Phi_n(u,v)\quad=\quad
\frac{n}{2}\Expect{\left(u\left(\X^{(n)}(0)\right)-u\left(\X^{(n)}(1)\right)\right)\left(v\left(\X^{(n)}(0)\right)-v\left(\X^{(n)}(1)\right)\right)}
$$
for any $u,v\in\Hilbert$ permits the deduction that
$$
\Phi_n(u)\quad=\quad
\Phi_n(u_n+(u-u_n),u_n+(u-u_n))=\Phi_n(u_n,u_n)+\Phi_n(u-u_n,u-u_n)+2\Phi_n(u_n,u-u_n)\,.
$$
Therefore
\begin{equation}\label{eq:phi_n_triangular_ineq}
\limsup \Phi_n(u)
\;\leq\;
\limsup \Phi_n(u_n)
+
\limsup \Phi_n(u-u_n)
+
2\limsup \Phi_n(u_n,u-u_n)\,.
\end{equation}
As a consequence of \eqref{eq:phi_n_to_zero}, it follows that $\limsup \Phi_n(u-u_n)=0$.
Moreover an application of the Cauchy-Schwartz inequality and the fact that $\limsup \Phi_n(u_n)\leq\Phi(u)$ shows
$$
\limsup \Phi_n(u_n,u-u_n)
\;\leq\;
\limsup \sqrt{\Phi_n(u_n)\Phi_n(u-u_n)}
\;\leq\;
\sqrt{\Phi(u)}\limsup \sqrt{\Phi_n(u-u_n)}
\;=\;
0\,.
$$
When combined with \eqref{eq:phi_n_triangular_ineq} and (M2) of Mosco convergence,
the latter results in the deduction that $\limsup \Phi_n(u)\leq\Phi(u)$, as desired.

Now consider condition (S2).
Suppose $F_1 \subseteq F_2 \subseteq \ldots$ is a $\Phi$-nest of compact sets.
Therefore there exist $u_k\in D(\Phi)$ with $u_k\geq 1$ on $\Polish \setminus F_k$ such that
$\|u_k\|_{\Hilbert}+\Phi(u_k)\rightarrow 0$.
By definition of $\Capacity_{\Phi_n}$ and 
by Lemma \ref{lemma:capacity_bound} below, it is the case that
$$
\sup_{n} \Capacity_{\Phi_n}(\Polish\setminus F_k)
\quad\leq\quad
\sup_{n}\|u_k\|_{\Hilbert}+\Phi_n(u_k)
\quad\leq\quad
\|u_k\|_{\Hilbert}+C(\|u_k\|_{\Hilbert}^2+\Phi(u_k))
\quad\rightarrow\quad 0\,.
$$
Therefore $\{F_k\}_{k\in\Numbers}$ is a uniform $\{\Phi_n\}$-nest and so (S2) holds.
\end{proof}

\section{Mosco convergence for Metropolis-Hastings Random Walks}\label{sec:proof_of_mosco}
In this section we establish Mosco convergence in three steps.
We begin with a lemma and a corollary which describe central limit behaviour 
for a conditioned instance of the Metropolis-Hastings ratio, making heavy use of the regularity conditions at and above \eqref{eq:holder}.
This is then applied to establish the two conditions for Mosco convergence (Definition \ref{defn:Mosco_convergence}) in Sections \ref{sec:mosco_2} and \ref{sec:mosco_1}

\subsection{Convergence of the acceptance function}
Consider the Metropolis-Hastings ratio for the Metropolis-Hasting random walk algorithm, conditioned on the chain state.
Under mild conditions (finite Fisher information, local H\"older and controlled growth of derivative of log-density), 
we now show that the conditioned ratio $a(\Xn,\Wn)|\Xn=\xn$ 
converges in distribution to $1\wedge\exp\left(\mathcal{N}(-\tfrac{\scale^2}{2}\I,\scale^2 \, \I)\right)$ as \(n\to\infty\), for almost every sequence $(x_1,x_2,\dots)$.

\begin{lem}\label{lemma:acceptance_prob}
Let $\phi:\Reals\rightarrow\Reals$ and $\W=(W_1,W_2,\dots)$ be as described above in Section \ref{sec:overview_and_results}.
Given finite Fisher information, and local H\"older and controlled growth for the derivative of the log-density \(\phi\),
for $\pi^{\otimes\infty}$-almost every sequence $(x_1,x_2,\dots)$,
\begin{equation}\label{eq:acceptance_prob}
\log\left(\frac{f(\xn+\frac{\scale}{\sqrt{n}} \Wn)}{f(\xn)} \right)
\quad=\quad
\sum_{i=1}^n
\left( \phi\left(x_i+\frac{\scale}{\sqrt{n}} W_i\right)-\phi(x_i)  \right)
\quad\stackrel{\mathcal{D}}\rightarrow \quad
\mathcal{N}\left(-\frac{\scale^2}{2}\I,\scale^2 \,\I\right)\,.
\end{equation}
\end{lem}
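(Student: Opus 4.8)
The plan is to condition on the sequence $(x_1,x_2,\ldots)$ --- under which the summands $Y_{n,i}:=\phi\bigl(x_i+\tfrac{\scale}{\sqrt n}W_i\bigr)-\phi(x_i)$, for $i=1,\ldots,n$, become independent --- and to verify the conditional convergence in \eqref{eq:acceptance_prob} for $\pi^{\otimes\infty}$-almost every $(x_i)$; the first equality in \eqref{eq:acceptance_prob} is immediate from $f=e^\phi$ and the multiplicative evaluation convention. Since $\phi\in C^1$, the fundamental theorem of calculus gives the exact decomposition
\[
Y_{n,i}\quad=\quad\frac{\scale}{\sqrt n}\,\phi'(x_i)\,W_i\;+\;R_{n,i}\,,
\qquad
R_{n,i}\quad:=\quad\int_0^{\frac{\scale}{\sqrt n}W_i}\bigl(\phi'(x_i+s)-\phi'(x_i)\bigr)\,\d s\,,
\]
so that $\sum_{i=1}^n Y_{n,i}=G_n+\mathcal R_n$ with $G_n:=\tfrac{\scale}{\sqrt n}\sum_{i=1}^n\phi'(x_i)W_i$ and $\mathcal R_n:=\sum_{i=1}^n R_{n,i}$. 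It then suffices, by Slutsky's theorem, to show that for $\pi^{\otimes\infty}$-almost every $(x_i)$ one has (conditionally) $G_n\Rightarrow\mathcal N(0,\scale^2\I)$ and $\mathcal R_n\to-\tfrac{\scale^2}{2}\I$ in probability.

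The convergence of $G_n$ is immediate: conditionally on $(x_i)$ it is exactly $\mathcal N\bigl(0,\tfrac{\scale^2}{n}\sum_{i=1}^n\phi'(x_i)^2\bigr)$, and $\tfrac{\scale^2}{n}\sum_{i=1}^n\phi'(x_i)^2\to\scale^2\I$ for $\pi^{\otimes\infty}$-almost every $(x_i)$ by the strong law of large numbers (using finite Fisher information $\Expect{\phi'(X_1)^2}=\I<\infty$). For $\mathcal R_n$ I would first separate its conditional mean $\Expect{\mathcal R_n\mid(x_i)}=\sum_{i=1}^n h_n(x_i)$, where $h_n(x):=\Expect{\phi\bigl(x+\tfrac{\scale}{\sqrt n}W_1\bigr)}-\phi(x)$. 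Condition \eqref{eq:holder} gives $|R_{n,i}|\le\int_0^{|\frac{\scale}{\sqrt n}W_i|}k\max\{s^\gamma,s^\alpha\}\,\d s$, which on the event $\{|W_i|\le\sqrt n/\scale\}$ is at most $\tfrac{k}{1+\gamma}\bigl(\tfrac{\scale}{\sqrt n}|W_i|\bigr)^{1+\gamma}$ and on the complementary (super-exponentially unlikely) event is bounded by a fixed polynomial in $|W_i|$; this yields, uniformly in $x_i$, both $\Expect{R_{n,i}^2\mid x_i}=O(n^{-(1+\gamma)})$ and $\|h_n\|_\infty=O(n^{-(1+\gamma)/2})$. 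The first estimate gives $\operatorname{Var}\bigl(\mathcal R_n\mid(x_i)\bigr)=\sum_{i=1}^n\operatorname{Var}(R_{n,i}\mid x_i)=O(n^{-\gamma})\to0$ --- deterministically, for every $(x_i)$ --- so $\mathcal R_n-\Expect{\mathcal R_n\mid(x_i)}\to0$ in probability.

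It remains to show $\sum_{i=1}^n h_n(x_i)\to-\tfrac{\scale^2}{2}\I$ for $\pi^{\otimes\infty}$-almost every $(x_i)$, and this is the crux. I would first establish the purely deterministic limit $n\int h_n\,\d\pi\to-\tfrac{\scale^2}{2}\I$. Symmetrising over $W_1\mapsto-W_1$ rewrites $\int h_n\,\d\pi$ as $\tfrac12\Expect{\phi(X+\tfrac{\scale}{\sqrt n}W)+\phi(X-\tfrac{\scale}{\sqrt n}W)-2\phi(X)}$ (with $X\sim\pi$ and $W\sim\mathcal N(0,1)$ independent); applying the fundamental theorem of calculus to the inner second difference and integrating by parts via $f'=\phi' f$ expresses its $X$-expectation, for small displacement, as an integral over a short interval about the origin of $\psi(t):=\Expect{\phi'(X)\phi'(X-t)}$. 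Here $\psi(0)=\I$, and $|\psi(t)-\I|\le\Expect{|\phi'(X)|}\,k\max\{|t|^\gamma,|t|^\alpha\}\le\sqrt{\I}\,k\max\{|t|^\gamma,|t|^\alpha\}$ by \eqref{eq:holder} and Cauchy--Schwarz, so $\psi$ is finite near $0$ and continuous there; this delivers the claimed limit. This is precisely the step where the classical second-order identity $\int\phi'' f=-\I$ --- valid only for $f\in C^2$ --- is replaced by an argument requiring only finite Fisher information together with the weak regularity \eqref{eq:holder}. The passage from this annealed drift to the quenched sum $\sum_{i=1}^n h_n(x_i)$ is then handled using the uniform bound $\|h_n\|_\infty=O(n^{-(1+\gamma)/2})$: the variables $h_n(x_i)-\int h_n\,\d\pi$, $i=1,\ldots,n$, are \iid, centred, and individually bounded by $O(n^{-(1+\gamma)/2})$, so Hoeffding's inequality bounds the probability that $\bigl|\sum_{i=1}^n\bigl(h_n(x_i)-\int h_n\,\d\pi\bigr)\bigr|$ exceeds $\varepsilon$ by $2\exp(-c\varepsilon^2 n^{\gamma})$, which is summable; Borel--Cantelli then gives $\sum_{i=1}^n h_n(x_i)\to-\tfrac{\scale^2}{2}\I$ almost surely.

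Combining the three ingredients with Slutsky's theorem yields \eqref{eq:acceptance_prob}. The main obstacle is the drift computation in the third step: extracting the exact constant $-\tfrac{\scale^2}{2}\I$ with no second-order control on $f$, by replacing $\int\phi'' f=-\I$ with the continuity of $\psi$ at the origin --- with the transfer from the annealed to the quenched drift a more routine but still delicate companion point, controlled by the uniform smallness of $h_n$ afforded by \eqref{eq:holder}.
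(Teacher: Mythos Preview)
Your proposal is correct and follows essentially the same route as the paper: the identical first-order Taylor decomposition $Y_{n,i}=\tfrac{\scale}{\sqrt n}\phi'(x_i)W_i+R_{n,i}$, the SLLN for the Gaussian variance, the H\"older bound \eqref{eq:holder} to kill the conditional variance of $\mathcal R_n$, and Hoeffding plus Borel--Cantelli to pass from the annealed drift $n\int h_n\,\d\pi$ to the quenched sum $\sum_i h_n(x_i)$ (your $h_n$ is exactly the paper's $Z^{(n)}/n$, with the same $O(n^{-(1+\gamma)/2})$ uniform bound). The only cosmetic differences are that you split the H\"older estimate on $\{|W_i|\le\sqrt n/\scale\}$ and its complement where the paper uses a single bound $k|W_i|\max\{|\scale W_i|^\gamma,|\scale W_i|^\alpha\}/n^{\gamma/2}$, and that you symmetrise in $W$ before the translation/$f'=\phi'f$ manipulation for the drift whereas the paper translates first; both lead to the same autocorrelation $\psi(t)=\Expect{\phi'(X)\phi'(X-t)}$ and its continuity at $0$.
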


\begin{proof}
Throughout the proof we condition implicitly on $X_1=x_1,X_2=x_2,\dots$.
We begin by separating the left-hand side of \eqref{eq:acceptance_prob} into two summands,
the first of which is of mean zero and carries all the asymptotic random variation.
\begin{equation}\label{eq:divide_acc_ratio}
\sum_{i=1}^n\left( \phi\left(x_i+\frac{\scale}{\sqrt{n}} W_i  \right)-\phi(x_i)  \right)
\;=\;
\frac{\scale}{\sqrt{n}}\sum_{i=1}^n\phi'(x_i)W_i+
\frac{\scale}{\sqrt{n}}\sum_{i=1}^nW_i\int_0^1
\left(\phi'\left(x_i+\frac{\scale u}{\sqrt{n}} W_i \right)-\phi'(x_i)\right) \d u\,.
\end{equation}
Analysis of the first summand of the right-hand side of \eqref{eq:divide_acc_ratio} can be achieved rapidly
using the strong law of large numbers:
$\tfrac{1}{n}{\sum_{i=1}^n\phi'(x_i)^2}$ converges to $\I$ for $\pi^{\otimes\infty}$-almost every $x_1,x_2,\dots$.
Since $\sigma_n\rightarrow\sigma$ implies $\mathcal{N}\left(0,\sigma_n\right)
\stackrel{\mathcal{D}}\rightarrow
N\left(0,\sigma\right)$, 
it follows that for $\pi^{\otimes\infty}$-almost every $x_1,x_2,\dots$
\begin{equation*}
\L\left(
\frac{\scale}{\sqrt{n}}\sum_{i=1}^n\phi'(x_i)W_i
\right)
\;=\;
\mathcal{N}\left(0,\scale^2\times \tfrac{1}{n}{\sum_{i=1}^n\phi'(x_i)^2}\right)
\quad\stackrel{\mathcal{D}}\rightarrow \quad
\mathcal{N}\left(0,\scale^2 \, \I\right)\,.
\end{equation*}

The second summand of the right-hand side of \eqref{eq:divide_acc_ratio} requires more detailed attention, 
and its treatment requires some regularity of \(\phi'\), for example as  expressed in \eqref{eq:holder} above.
We seek to show that this summand converges in distribution to $-\frac{\scale^2}{2}\I$.
The strategy is to show that its expectation converges to $-\frac{\scale^2}{2}\I$, while its variance vanishes asymptotically.
Recall that variances are bounded by second moments. 
Applying this to each of the \(n\) conditionally independent terms involved in the finite sum
(conditioning implicitly on  $X_1=x_1,X_2=x_2,\dots$ as noted above), we find:
\begin{multline}\label{eq:variance_sec_moment}
\Var{\frac{\scale}{\sqrt{n}}\sum_{i=1}^nW_i
\int_0^1\left(\phi'\left(x_i+\frac{\scale u}{\sqrt{n}} W_i \right)-\phi'(x_i) \right)\d u}
\\
\quad=\quad
\frac{\scale^2}{n}\sum_{i=1}^n \Var{W_i
\int_0^1\left(\phi'\left(x_i+\frac{\scale u}{\sqrt{n}} W_i \right)-\phi'(x_i) \right)\d u}
\;\\
\quad\leq\quad
\frac{\scale^2}{n}\sum_{i=1}^n
\Expect{\left|W_i
\left(\int_0^1\left(\phi'\left(x_i+\frac{\scale u}{\sqrt{n}} W_i \right)-\phi'(x_i) \right)\d u\right)\right|^2}
\;.
\end{multline}
Employing the regularity of $\phi'$ as given in the combined growth / local H\"older condition \eqref{eq:holder}, 
and noting that $u^\alpha\leq u^\gamma$ for $u\in (0,1)$ and $n^\gamma\leq n^\alpha$ for $n\geq 1$ (with $\alpha$ and $\gamma$ as given in \eqref{eq:holder}),
\begin{multline}\label{eq:holder_lemma}
\left|W_i\int_0^1
\left(\phi'\left(x_i+\frac{\scale u}{\sqrt{n}} W_i \right)-\phi'(x_i)\right) \d u\right|
\quad\leq\quad
k\;|W_i|\;
\frac{\max\{|\scale W_i|^\gamma,|\scale W_i|^\alpha\}}{n^{\frac{\gamma}{2}}}
\int_0^1
u^\gamma\d u\\
\quad=\quad
k\;
\frac{|W_i|\max\{|\scale W_i|^\gamma,|\scale W_i|^\alpha\}}{n^{\frac{\gamma}{2}}(1+\gamma)}\,,
\end{multline}
where \(k\) is the constant appearing in \eqref{eq:holder}.
Combining \eqref{eq:variance_sec_moment} and \eqref{eq:holder_lemma}, 
we deduce that the second summand has variance bounded above by
\begin{multline*}
\frac{\scale^2}{n}\sum_{i=1}^n
\Expect{k^2
\frac{|W_i|^2\max\{|\scale W_i|^{2\gamma},|\scale W_i|^{2\alpha}\}}{n^{\gamma}(1+\gamma)^2}
}
\\
\quad\leq\quad
\frac{\scale^2k^2(\scale^{2\gamma}+\scale^{2\alpha})}{n^\gamma(1+\gamma)^2}
\Expect{|W_i|^{2(1+\gamma)}+|W_i|^{2(1+\alpha)}}
\;\rightarrow\; 0\,,\;\text{ as }n\rightarrow\infty.
\end{multline*}
So the variance of the second summand vanishes asymptotically.


We turn to the expectation of the second summand.
Once again we condition implicitly on  $X_1=x_1,X_2=x_2,\dots$.
We obtain
\begin{equation*}
\Expect{\frac{\scale}{\sqrt{n}}\sum_{i=1}^nW_i\int_0^1
\left(\phi'\left(x_i+\frac{\scale u}{\sqrt{n}} W_i \right)-\phi'(x_i)\right) \d u}
\;=\;
\frac{\sum_{i=1}^nZ^{(n)}(x_i)}{n}
\,,
\end{equation*}
where $Z^{(n)}(x_i)=\scale\sqrt{n} \Expect{W_i\int_0^1\left(\phi'\left(x_i+\tfrac{\scale u}{\sqrt{n}}W_i\right)-\phi'(x_i)\right) \d u}$.
It follows from \eqref{eq:holder_lemma} that $|Z^{(n)}(x_i)|\leq\tilde{c}\,n^{\frac{1-\gamma}{2}}$, where 
$\tilde{c}=\scale
\tfrac{k}{1+\gamma}\Expect{|W_i|\max\{|\scale W_i|^\gamma,|\scale W_i|^\alpha\}}$.
We now integrate out the implicit conditioning.
The random variables $Z^{(n)}(X_1)$,\dots,$Z^{(n)}(X_n)$ are \iid, with values lying in the range
$[-\tilde{c}\,n^{\frac{1-\gamma}{2}},\tilde{c}\,n^{\frac{1-\gamma}{2}}]$.
Hence Hoeffding's inequality applies: for any positive $\eps$,
\begin{equation}\label{eq:hoeffding}
\Prob{\left|\frac{\sum_{i=1}^nZ^{(n)}(X_i)}{n}-\Expect{Z^{(n)}(X_1)}\right|>\eps}
\quad\leq\quad
2\,\exp\left(-\frac{2n^2\eps^2}{n(2\tilde{c}n^{\frac{1-\gamma}{2}})^2}\right)
\quad=\quad
2\,\exp\left(-\frac{\eps^2}{2\tilde{c}^2}n^{\gamma}\right)\,.
\end{equation}
The right-hand side of \eqref{eq:hoeffding} is summable over $n$, since \(\gamma>0\), 
and therefore the first Borel-Cantelli lemma applies:
$\tfrac{1}{n}{\sum_{i=1}^nZ^{(n)}(X_i)}$ converges almost surely to $\lim_{n\to\infty}\Expect{Z^{(n)}(X_1)}$, if such a limit exists. 

To complete the proof it suffices to show that $\lim_{n\to\infty}\Expect{Z^{(n)}(X_1)}=-\frac{\scale^2}{2}\I$.
Shifting an \(x\)-variable of integration, we achieve the following,
\begin{multline*}
\Expect{Z^{(n)}(X_1)}
\quad=\quad
\scale\sqrt{n} 
\int_\Reals
\Expect{
 W_1 \int_0^1
\left(\phi'(x+\tfrac{\scale u}{\sqrt{n}}W_1) - \phi'(x)\right)
\d u
}
e^{\phi(x)}\d x
\\
\quad=\quad
\scale\sqrt{n}
\int_0^1
\Expect{
W_1 \;
\int_\Reals
 \left( e^{\phi(x-\tfrac{\scale u}{\sqrt{n}}W_1)}
-
e^{\phi(x)}
\right)
\; 
\phi'(x)\d x
}
\d u
\\
\quad=\quad
-\scale^2
\int_0^1
\Expect{
W_1^2 \;
\int_0^1
\int_\Reals
 \left( 
 \phi'(x-\tfrac{\scale uv}{\sqrt{n}}W_1)
 e^{\phi(x-\tfrac{\scale uv}{\sqrt{n}}W_1)}
\right)
\; 
\phi'(x)\d x
\d v
}
u \d u\,.
\end{multline*}
(The exchange of integrals and expectations is justified by a Fubini argument involving the finiteness of  $\I=\int_{-\infty}^{\infty}|\phi'(x)|^2 \, f(x)\;\d x$.)
But now we undo the shift of the \(x\)-variable of integration and use the regularity condition \eqref{eq:holder} for \(\phi'\). For \(n\geq1\), this leads to:
\begin{multline*}
\left|\Expect{Z^{(n)}(X_1)} + \frac{\tau^2}{2}\I\right|
\quad=\quad
\left|\Expect{Z^{(n)}(X_1)} + 
\scale^2\int_0^1
\Expect{
W_1^2 \;
\int_0^1\int_\Reals
\phi'(x)^2e^{\phi(x)}\d x \d v
}
u \d u\right|
\\
\quad=\quad
\scale^2
\left|\int_0^1
\Expect{
W_1^2 \;
\int_0^1
\int_\Reals
 \left( 
\phi'(x) \phi'(x-\tfrac{\scale uv}{\sqrt{n}}W_1)
 e^{\phi(x-\tfrac{\scale uv}{\sqrt{n}}W_1)}
-
\phi'(x)^2e^{\phi(x)}
\right)
\;\d x
\d v
}
u \d u
\right|
\\
\quad=\quad
\scale^2
\left|\int_0^1
\Expect{
W_1^2 \;
\int_0^1
\int_\Reals
 \left( 
 \phi'(x+\tfrac{\scale uv}{\sqrt{n}}W_1)
-
\phi'(x)
\right)
\; 
\phi'(x)e^{\phi(x)}\d x
\d v
}
u \d u
\right|
\\
\quad\leq\quad
k \times 
\scale^2
\int_0^1
\int_0^1
\Expect{
W_1^2 \;
\int_\Reals
\max\left\{
\left(\frac{\scale uv}{\sqrt{n}}|W_1|\right)^\gamma
,
\left(\frac{\scale uv}{\sqrt{n}}|W_1|\right)^\alpha
\right\}
\; 
\left|
\phi'(x)\right|
e^{\phi(x)}
\d x
}
\d v \; u \d u
\\
\quad\leq\quad
k \times 
\scale^{2}(\scale^\gamma+\scale^\alpha)
\Expect{
\left(|W_1|^{2+\gamma} + |W_1|^{2+\alpha}\right) \;
}
\int_\Reals
\; 
\left|
\phi'(x)\right|
e^{\phi(x)}
\d x
\frac{1}{n^{\gamma/2}}
\quad\to\quad 0 \text{ as } n\to\infty
\,.
\end{multline*}
Here the finiteness of $\int_\Reals\left|
\phi'(x)\right|
e^{\phi(x)}
\d x=\Expect{\left|
\phi'(X_1)\right|}$
follows from $\Expect{\left|
\phi'(X_1)\right|^2}=\I<\infty$.
\end{proof}

The above result will actually be used in the following form.
\begin{cor}\label{cor:acceptance_prob}
Let $\phi:\Reals\rightarrow\Reals$, $\X=(X_1,X_2,\dots)$, $\W=(W_1,W_2,\dots)$, and \(a(\Xn,\Wn)\) be as described above in Section \ref{sec:MHRW}.
For any $N\geq 1$, 
almost surely as \(n\to\infty\) we have
$
\Expect{a(\Xn,\Wn)|\Xn,\WN}
\to
c(\tau) =
\Expect{1\wedge\exp\left(\mathcal{N}(-\frac{\scale^2}{2}\I,\scale^2 \I)\right)}
$.
\end{cor}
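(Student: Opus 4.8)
The plan is to deduce the Corollary from Lemma \ref{lemma:acceptance_prob} by two essentially routine upgrades: passing from the bare log-ratio to the Metropolis--Hastings ratio $a=1\wedge\exp(\cdot)$, and incorporating the extra conditioning on the first $N$ proposal increments $\WN$. Throughout, condition implicitly on $\Xn=\xn$ for a $\pi^{\otimes\infty}$-almost-every sequence $(x_1,x_2,\dots)$, namely one for which the conclusion of Lemma \ref{lemma:acceptance_prob} holds.

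First I would write $S_n:=\log\left(\frac{f(\xn+\frac{\scale}{\sqrt{n}}\Wn)}{f(\xn)}\right)=\sum_{i=1}^n\big(\phi(x_i+\tfrac{\scale}{\sqrt{n}}W_i)-\phi(x_i)\big)$ and split it as $S_n=S_n^{(N)}+R_n^{(N)}$, where $S_n^{(N)}=\sum_{i=1}^N\big(\phi(x_i+\tfrac{\scale}{\sqrt{n}}W_i)-\phi(x_i)\big)$ collects the first (now conditioned) $N$ terms and $R_n^{(N)}=\sum_{i=N+1}^n\big(\phi(x_i+\tfrac{\scale}{\sqrt{n}}W_i)-\phi(x_i)\big)$ the remainder. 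Since $N$ is fixed and $\phi$ is continuous, $S_n^{(N)}\to 0$ as $n\to\infty$ for every fixed value of $(W_1,\dots,W_N)$. Meanwhile Lemma \ref{lemma:acceptance_prob} gives $S_n\stackrel{\mathcal{D}}{\rightarrow}\mathcal{N}(-\tfrac{\scale^2}{2}\I,\scale^2\I)$; combining this with $S_n^{(N)}\to 0$ and Slutsky's theorem yields $R_n^{(N)}\stackrel{\mathcal{D}}{\rightarrow}\mathcal{N}(-\tfrac{\scale^2}{2}\I,\scale^2\I)$.

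The point of the decomposition is that $R_n^{(N)}$ depends only on $W_{N+1},\dots,W_n$, which are independent of $\WN$. Hence the conditional law of $R_n^{(N)}$ given $\WN=\wN$ agrees with its (already conditioned on $\xn$) law, so $R_n^{(N)}\mid\{\WN=\wN\}\stackrel{\mathcal{D}}{\rightarrow}\mathcal{N}(-\tfrac{\scale^2}{2}\I,\scale^2\I)$; and since $S_n^{(N)}$ becomes a deterministic null sequence once $\WN$ is fixed, a further application of Slutsky gives $S_n\mid\{\Xn=\xn,\WN=\wN\}\stackrel{\mathcal{D}}{\rightarrow}\mathcal{N}(-\tfrac{\scale^2}{2}\I,\scale^2\I)$. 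Applying the bounded continuous map $t\mapsto 1\wedge e^{t}$ (continuous mapping theorem), and then using that $a(\Xn,\Wn)=1\wedge e^{S_n}$ takes values in $[0,1]$ so that weak convergence forces convergence of expectations, I obtain $\Expect{a(\Xn,\Wn)\mid\Xn=\xn,\WN=\wN}\to\Expect{1\wedge\exp(\mathcal{N}(-\tfrac{\scale^2}{2}\I,\scale^2\I))}=c(\scale)$. As this holds for every $\wN$ and $\pi^{\otimes\infty}$-almost every $\xn$, and the limit is the constant $c(\scale)$, the claimed almost-sure convergence $\Expect{a(\Xn,\Wn)\mid\Xn,\WN}\to c(\scale)$ follows.

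I do not expect a genuine obstacle here: the statement is a routine strengthening of Lemma \ref{lemma:acceptance_prob}. The only points needing care are bookkeeping ones: one must be explicit that freezing the finitely many coordinates $\WN$ perturbs $S_n$ by an asymptotically negligible amount (continuity of $\phi$ plus Slutsky) and leaves the law of the bulk term $R_n^{(N)}$ unchanged (independence); and one should record that the word ``almost surely'' refers to the randomness in $\X$ (the exceptional null set being inherited from Lemma \ref{lemma:acceptance_prob}), the convergence being valid for all values of $\WN$.
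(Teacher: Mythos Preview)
Your argument is correct and follows the same overall strategy as the paper: split off the first $N$ terms of the log-ratio, show they are asymptotically negligible, note that the remaining $n-N$ terms are independent of $\WN$, apply Lemma~\ref{lemma:acceptance_prob} to that tail, and then use boundedness of $t\mapsto 1\wedge e^t$ to pass from convergence in distribution to convergence of the conditional expectation.

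The only technical difference is how the negligibility of the first $N$ terms is handled. You work at the level of the log-ratio and invoke Slutsky's theorem twice (once to isolate $R_n^{(N)}$, once to reassemble after conditioning on $\WN$). The paper instead works directly with the acceptance probability and uses the elementary inequality $|(1\wedge ab)-(1\wedge b)|\le|1-a|$ to bound
\[
\left|\Expect{a(\Xn,\Wn)\,\middle|\,\Xn,\WN}-\Expect{1\wedge\tfrac{f(\XNn+\frac{\scale}{\sqrt{n}}\WNn)}{f(\XNn)}\,\middle|\,\Xn,\WN}\right|
\;\le\;
\left|\tfrac{f(\XN+\frac{\scale}{\sqrt{n}}\WN)}{f(\XN)}-1\right|\to 0\,.
\]
Your Slutsky route is perhaps the more standard packaging; the paper's inequality is slightly more direct in that it avoids going back and forth between $S_n$ and $1\wedge e^{S_n}$. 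Either way the content is the same.
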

\begin{proof}
Given $a,b>0$,
we have $|(1\wedge ab)-(1\wedge b)|\leq|1-a|$.
This follows because if $b<1$ then $x\rightarrow 1\wedge bx$ is 1-Lipschitz, 
while if $b\geq 1$ and $a\geq\frac{1}{b}$ the left-hand side is 0,
and finally if $b\geq 1$ and $a<\frac{1}{b}$ then $a\leq ab< 1$ and $|ab-1|\leq|1-a|$. Therefore
\begin{equation*}
\left|
\Expect{\left.1\wedge\frac{f(\Xn+\frac{\scale}{\sqrt{n}} \Wn)}{f(\Xn)}-1\wedge\frac{f(\XNn+\frac{\scale}{\sqrt{n}} \WNn)}{f(\XNn)}\,\right|\,\Xn ,\WN }
\right|
\leq
\left|
\frac{f(\XN +\frac{\scale}{\sqrt{n}} \WN )}{f(\XN )} -1
\right|\,,
\end{equation*}
which converges to 0 almost surely for $n\rightarrow \infty$.
Moreover, by Lemma \ref{lemma:acceptance_prob} and the 
dominated convergence theorem, 
as \(n\to\infty\) so
$$
\Expect{\left.1\wedge\frac{f(\XNn+\frac{\scale}{\sqrt{n}} \WNn)}{f(\XNn)}\,\right|\,\Xn ,\WN }
\quad\longrightarrow\quad
\Expect{1\wedge\exp\left(\mathcal{N}(-\frac{\scale^2}{2}\I,\scale^2 \I)\right)}\,.
$$
\end{proof}

\subsection{Proving the second Mosco condition (M2)}\label{sec:mosco_2}
Suppose that the conditions of Section \ref{sec:MHRW} are satisfied.
We establish the validity of Definition \ref{defn:Mosco_convergence} (M2) before that of (M1), because (M2) follows by a more straightforward argument.
If $h\in \Hilbert\setminus \Sobolev$ then $\Phi(h)=\infty$ and thus (M2) holds trivially, for example choosing a sequence $\{h_n\}_{n=1}^\infty$ identically equal to $h$. 

Consequently we need only consider the case $h\in \Sobolev$. 
Since $\bigcup_{N\geq1}C_{0,N}^\infty(\Reals^\infty)$ is dense in $S$, there exists a sequence $\{h_k\}_{k=1}^\infty\subset \bigcup_{N\geq1}C_{0,N}^\infty(\Reals^\infty)$ such that $\|h_k-h\|_{\Sobolev}\rightarrow 0$ as \(k\to\infty\), hence $\Phi(h_k)\longrightarrow\Phi(h)$.
Choosing a subsequence and re-labelling, we may suppose that
$$
|\Phi(h_{k})-\Phi(h)|\quad\leq\quad\frac{1}{k}\qquad \text{ for }k=1,2,\ldots\,.
$$

For fixed \(k\), noting that \(h_k\in C_{0,N}^\infty(\Reals^\infty)\) for some \(N\) and that by virtue of this \(h_k\) is induced by a smooth function of compact support on \(\Reals^N\),
we see that $\Phi_n(h_k)\longrightarrow\Phi(h_k)$ as \(n\to\infty\). 
Indeed,
\begin{equation*}
\Phi_n (h_k)\quad=\quad
\Expect{\frac{\scale^2}{2}
\left(\frac{h_k(\XN +\frac{\scale }{\sqrt{n}} \WN )-h_k(\XN )}{\scale/\sqrt{n}}\right)^2
\Expect{1\wedge\frac{f(\Xn+\frac{\scale}{\sqrt{n}} \Wn)}{f(\Xn)}\,\Big|\,\XN ,\WN }
}\,.
\end{equation*}
The expression inside the outer expectation is bounded by
$\frac{\scale^2}{2}\left(|\WN | \,\|h_k'\|_\infty\right)^2$,
which is an integrable random variable.
Because of the regularity of $h_k$ and Corollary \ref{cor:acceptance_prob}, this expression converges pointwise to 
$\frac{\scale^2}{2}(\nabla h_k(\XN )^T\WN )^2\,c(\scale)$ as \(n\to\infty\).
Therefore it follows from the dominated convergence theorem that
as \(n\to\infty\) so
$\Phi_n(h_k)\longrightarrow\frac{\scale^2c(\scale)}{2}\;
\Expect{(\nabla h_k(\XN )^T\WN )^2}=\Phi(h_k)$.
Thus $|\Phi_n(h_k)-\Phi(h_k)|<\frac{1}{k}$ for sufficiently large $n$ depending on $k$,
and so we can choose an increasing sequence $j_1=1 < j_2 < \ldots$ such that for any $k=1,2,\ldots$
$$
|\Phi_n(h_{k})-\Phi(h_k)|\quad\leq\quad
\frac{1}{k}\qquad\text{ for all } n\geq j_k\,. 
$$
Note that we can in addition stipulate that $j_k\geq k$. 
For $n\geq j_1$ we define $\sigma_n=\sup\{k:j_k\leq n\}$.
Note that $1\leq\sigma_n\leq n$ and 
moreover $\sigma_n\to\infty$ as \(n\to\infty\), because $\sigma_n\geq k$ for $n\geq j_k$.
Finally, by definition of $\sigma_n$ it is the case that $j_{\sigma_n}\leq n$.
Therefore, as \(n\to\infty\),
$$
|\Phi_n(h_{\sigma_n})-\Phi(h)|
\quad\leq\quad
|\Phi_n(h_{\sigma_n})-\Phi(h_{\sigma_n})|+|\Phi(h_{\sigma_n})-\Phi(h)|
\quad\leq\quad
\frac{1}{\sigma_n}+\frac{1}{\sigma_n}
\;\longrightarrow\;0\,.
$$
It follows that as \(n\to\infty\) so \(\Phi_n(h_{\sigma_n})\to \Phi(h)\),
hence \emph{a fortiori} $\limsup_{n}\Phi_n(h_{\sigma_n})\leq \Phi(h)$.
Moreover $h_{\sigma_n}\to h$ in \(\Hilbert\),
since $h_{n}\to h$ in \(\Hilbert\) and $\sigma_n\to\infty$. 
Relabelling $h_{\sigma_n}$ as $h_n$ produces the sequence required to establish the validity of the second Mosco condition.

\subsection{Proving the first Mosco condition (M1)}\label{sec:mosco_1}
We now turn to the more substantial question of the validity of Definition \ref{defn:Mosco_convergence} (M1) under the conditions described in Section \ref{sec:MHRW}. 
Consider $h_n,$ $h\in \Hilbert$ such that $h_n\stackrel{w}\rightarrow h$ weakly in $\Hilbert$ as \(n\to\infty\).
It is convenient to write $\Phi_n (h_n)=\left\|\Psi_n (h_n)\right\|^2_{L^2_{(\X,\W,U)}}$, where
\begin{align}
\Psi_n (h_n)\quad&=\quad\sqrt{\frac{n}{2}}\left(h_n(\textbf{X}^{(n)}(1))-h_n(\textbf{X}^{(n)}(0))\right)\,.
\end{align}
Fixing $N>0$ and taking a non-zero test function $\xi$ in $C_{0}^\infty(\Reals^{2N})$ (so \(\xi\) is infinitely differentiable with compact support, and in particular is bounded),
the function $\xi(\XN,\WN)\1(U<a(\Xn,\Wn))$ belongs to $L^2_{(\X,\W,U)}$ and is also non-zero.
We can therefore apply the Cauchy-Schwartz inequality and obtain:
\begin{equation}\label{eq:cauchy_schwartz}
\sqrt{\Phi_n (h_n)}\quad=\quad
\left\|\Psi_n (h_n)\right\|_{L^2_{(\X,\W,U)}}
\quad\geq\quad
\frac{\langle\Psi_n (h_n),\xi(\XN,\WN)\1(U<a(\Xn,\Wn))\rangle_{L^2_{(\X,\W,U)}}}
{\left\|\xi(\XN,\WN)\1(U<a(\Xn,\Wn))\right\|_{L^2_{(\X,\W,U)}}}\,.
\end{equation}
Here \(U\) is the Uniform\((0,1)\) random variable introduced in Section \ref{sec:MHRW}, which is independent of \(\textbf{X}\) and \(\textbf{W}\).

Consider the denominator of \eqref{eq:cauchy_schwartz}.
Integrating out first $U$ and then $(\XNn,\WNn)$ leads to
\begin{multline}\label{eq:convergence_denominator}
\left\|\xi(\XN,\WN)\1(U<a(\Xn,\Wn))\right\|_{L^2_{(\X,\W,U)}}
\quad=\quad
\sqrt{\Expect{\xi(\XN,\WN)^2a(\Xn,\Wn)}}
\\
\;=\;
\sqrt{\Expect{\xi(\XN,\WN)^2\Expect{a(\Xn,\Wn)|\XN,\WN}}}
\quad\to\quad
\sqrt{c(\scale)}\left\|\xi(\XN,\WN)\right\|_{L^2_{(\X,\W)}}\,.
\end{multline}
Convergence as \(n\to\infty\)
follows from Corollary \ref{cor:acceptance_prob}
(hence $\Expect{a(\Xn,\Wn)|\XN,\WN}$ converges almost surely to $c(\scale)=\Expect{1\wedge\exp\left(\mathcal{N}(-\tfrac{\scale^2}{2}\I,\scale^2 \I)\right)}$)
and the fact that
$\xi(\XN,\WN)^2\Expect{a(\Xn,\Wn)|\XN,\WN}$ 
is bounded by $\|\xi\|_\infty^2<\infty$
(note that the acceptance probability \(a(\Xn,\Wn)\) lies in \([0,1]\)).

In order to deal with the numerator of \eqref{eq:cauchy_schwartz}, it is necessary to argue in more detail,
as described by the following lemma.
\begin{lem}\label{lemma:weak_convergence}
Suppose as above that \(h_n\to h\) weakly in \(\Hilbert\).
Define a \emph{twisted gradient} $\nabla^{(f)}_{\xN}\xi(\XN,\WN)$ (twisted by the density \(f\)) by requiring that it satisfy
\[
f(\XN) \nabla^{(f)}_{\xN}\xi(\XN,\WN) \quad=\quad \nabla_{\xN}\left(\xi(\XN,\WN)f(\XN)\right)\,. 
\]
Then, as \(n\to\infty\),
\begin{multline}\label{eq:weak_convergence}
\left\langle\Psi_n (h_n)\,,\;\xi(\XN,\WN)\1(U<a(\Xn,\Wn))\right\rangle_{L^2_{(\X,\W,U)}}
\\
\quad\to\quad
-\frac{\scale\,c(\scale)}{\sqrt{2}}
\Expect{h(\X)(\nabla^{(f)}_{\xN}\xi(\XN,\WN)^T\WN)}\,.
\end{multline}
\end{lem}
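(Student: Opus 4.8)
The plan is to compute the inner product on the left-hand side of \eqref{eq:weak_convergence} directly, by writing it as an expectation over $(\X,\W,U)$, integrating out $U$ first, and then performing a Taylor-type expansion of the increment $h_n(\textbf{X}^{(n)}(1))-h_n(\textbf{X}^{(n)}(0))$ using the symmetry of the Gaussian proposal $\WN$. Writing out the definition,
\begin{multline*}
\left\langle\Psi_n(h_n),\xi(\XN,\WN)\1(U<a(\Xn,\Wn))\right\rangle_{L^2_{(\X,\W,U)}}
\\
=\;\sqrt{\tfrac{n}{2}}\,\Expect{\left(h_n(\textbf{X}^{(n)}(1))-h_n(\textbf{X}^{(n)}(0))\right)\xi(\XN,\WN)\,a(\Xn,\Wn)}\,,
\end{multline*}
since integrating out $U$ replaces $\1(U<a(\Xn,\Wn))$ by $a(\Xn,\Wn)$. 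The key device is a change of variables: because $\WN$ has a symmetric (Gaussian) law and because $a(\Xn,\Wn)$ depends on $(\Xn,\Wn)$ in a way that interchanges nicely under the reflection $(\XN,\WN)\mapsto(\XN+\tfrac{\scale}{\sqrt n}\WN,-\WN)$ together with the reversibility built into the acceptance ratio, one can rewrite the term containing $h_n(\textbf{X}^{(n)}(1))$ so that $h_n$ is evaluated at $\textbf{X}^{(n)}(0)$ in both terms. This is the analogue of the standard ``integration by parts against the proposal'' trick, and it is where the twisted gradient $\nabla^{(f)}_{\xN}$ enters: reflecting $\WN$ while shifting $\XN$ produces the Jacobian factor $f(\XN+\tfrac{\scale}{\sqrt n}\WN)/f(\XN)$, whose interaction with $\xi$ and with $a$ generates, to leading order in $1/\sqrt n$, precisely the quantity $\nabla^{(f)}_{\xN}\xi(\XN,\WN)^T\WN$ together with the factor $c(\scale)$ from Corollary \ref{cor:acceptance_prob}.

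After this symmetrisation, the $\sqrt{n/2}$ prefactor is absorbed: the surviving increment is of order $1/\sqrt n$, one factor of $\sqrt n$ cancels against the prefactor, and the limiting coefficient $-\scale c(\scale)/\sqrt 2$ emerges, the minus sign coming from the reflection $\WN\mapsto-\WN$. The remaining step is to pass $h_n\stackrel{w}\to h$ through the limit. Here the point is that, after symmetrisation, $h_n$ appears only through an inner product $\langle h_n,\Theta_n\rangle_{\Hilbert}$ where $\Theta_n$ is an explicit function of $(\X,\W)$ (built from $\xi$, its twisted gradient, $\WN$, and the conditional acceptance probability) that converges strongly in $\Hilbert=L^2(\Reals^\infty,\pi^{\otimes\infty})$ to the limiting test function $-\tfrac{\scale c(\scale)}{\sqrt 2}\,\Expect{\,\cdot\,}$-kernel appearing on the right of \eqref{eq:weak_convergence}; weak convergence of $h_n$ against a strongly convergent sequence $\Theta_n$ then yields convergence of the inner product. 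Dominated convergence (using boundedness of $\xi$, $a\in[0,1]$, smoothness and compact support of $\xi$, and the Gaussian moments of $\WN$) supplies the required strong $\Hilbert$-convergence of $\Theta_n$, together with Corollary \ref{cor:acceptance_prob} for the acceptance factor.

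The main obstacle I expect is the bookkeeping in the symmetrisation/change-of-variables step: one must track the increment $h_n(\textbf{X}^{(n)}(1))-h_n(\textbf{X}^{(n)}(0))$ carefully, split the expectation according to whether the reflection is applied, and control the error terms in the expansion of the density ratio $f(\XN+\tfrac{\scale}{\sqrt n}\WN)/f(\XN)=\exp(\sum_{i=1}^N(\phi(X_i+\tfrac{\scale}{\sqrt n}W_i)-\phi(X_i)))$ and of the part of the acceptance ratio involving coordinates $1:N$ — all of these are $O(1/\sqrt n)$ and must be shown not to contribute in the limit, which uses finite Fisher information and the growth/H\"older condition \eqref{eq:holder} exactly as in Lemma \ref{lemma:acceptance_prob}. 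A secondary subtlety is justifying that the ``twisted gradient'' identity $f(\XN)\nabla^{(f)}_{\xN}\xi=\nabla_{\xN}(\xi f)$ is what naturally appears: this is really the discrete integration-by-parts formula for the Gaussian proposal passing to its continuum (Malliavin-type) limit, and writing it cleanly for general test $\xi\in C_0^\infty(\Reals^{2N})$ (which depends on $\WN$ as well as $\XN$) requires keeping the $\WN$-dependence of $\xi$ frozen under the $\xN$-gradient. Once the leading term is correctly identified, the convergence itself is routine.
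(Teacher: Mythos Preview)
Your overall strategy matches the paper's: integrate out $U$, use a change of variables exploiting reversibility so that $h_n$ is evaluated only at $\textbf{X}^{(n)}(0)$, identify the leading contribution as a difference quotient of $\xi f$ (whence the twisted gradient) weighted by the conditional acceptance probability (whence $c(\scale)$ via Corollary~\ref{cor:acceptance_prob}), and pass to the limit by pairing the weakly convergent $h_n$ against a strongly convergent test function. Two points, however, are not right as stated.

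First, the change of variables you write, $(\XN,\WN)\mapsto(\XN+\tfrac{\scale}{\sqrt n}\WN,-\WN)$, acts only on the first $N$ coordinates and therefore cannot turn $h_n(\textbf{X}^{(n)}(1))$ into $h_n(\textbf{X}^{(n)}(0))$: the function $h_n$ feels the shift in coordinates $N{+}1,\dots,n$ as well. The paper translates \emph{all} $n$ coordinates, $\Xn\to\Xn-\tfrac{\scale}{\sqrt n}\Wn$, and then reflects only the tail $\WNn\mapsto-\WNn$; this is what makes the transformed acceptance factor directly comparable to the original one. The twisted gradient arises not from any reflection of $\WN$ but simply as the difference quotient of $\xi(\,\cdot\,,\WN)f(\,\cdot\,)$ in the $\XN$-variable, divided by $-\tfrac{\scale}{\sqrt n}$; the minus sign comes from the direction of the translation, not from $\WN\mapsto-\WN$.

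Second, and more substantively, saying that the residual terms ``are $O(1/\sqrt n)$ and must be shown not to contribute, using the H\"older condition as in Lemma~\ref{lemma:acceptance_prob}'' understates the main analytic difficulty. After the change of variables the acceptance factor has the form $e^{\dda}\wedge e^{\db}$ (with $\dda,\da$ built from coordinates $1{:}N$ and $\db$ from coordinates $N{+}1{:}n$) rather than $1\wedge e^{\da+\db}$. Their difference is indeed $O(1/\sqrt n)$, but it is multiplied by the prefactor $\sqrt{n/2}$, leaving an $O(1)$ quantity whose vanishing is \emph{not} a Taylor remainder controlled by \eqref{eq:holder}. The paper shows it goes to zero by rewriting $\sqrt{n}\,(e^{\da}\wedge e^{\db}-1\wedge e^{\da+\db})$ as explicit integrals against the conditional law of $\db$, using the CLT of Lemma~\ref{lemma:acceptance_prob} to get $\db\stackrel{\mathcal D}\to\mathcal N(-\tfrac{\scale^2}{2}\I,\scale^2\I)$, and then invoking the identity $\Prob{Z>0}=\Expect{e^Z\1(Z<0)}$ for $Z\sim\mathcal N(-\tfrac{\scale^2}{2}\I,\scale^2\I)$ to produce the required cancellation. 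That detailed-balance identity for the limiting Gaussian, rather than the local H\"older bound, is the crux of this step.
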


\begin{proof}
We use the following concise notation
\begin{align}
\x&=\xn\;,&
\xA&=\xN\;,&
\xB&=\xNn\;,&
\w&=\wn\;,&
\wA&=\wN\;,&
\wB&=\wNn\;.\nonumber
\end{align}
Fix a compact set $K\subset \Reals^{2N}$ such that $\bigcup_{n\in\mathbb{N}}\{(\xA,\wA)\,:\,\xi(\xA -\frac{\scale \wA }{\sqrt{n}},\wA )>0\}\subseteq K$.
For example, given $supp(\xi)\subseteq [-M,M]^{2N}$ (remember that $\xi$ has compact support), we can take $K=[-(1+\scale)M,(1+\scale)M]^N\times[-M,M]^N$.
Integrating out $U$ and $(X_{n+1},X_{n+2},\dots)$ the left-hand side of \eqref{eq:weak_convergence} equals
\begin{equation}\label{eq:inner_prod}
\sqrt{\frac{n}{2}} \int_{\Reals^{2n}}
\left(\tilde{h}_n(\x+\frac{\scale}{\sqrt{n}} \w)-\tilde{h}_n(\x)\right)
a(\x,\w)
\;
\xi(\xA ,\wA )
\;
f(\x)g(\w)\,\d \x\,\d \w\,,
\end{equation}
where $\tilde{h}_n(\x)=\Expect{h_n(\X)|\Xn=\x}$. 

Weak convergence of \(h_n\) to \(h\) in \(\Hilbert\) implies that \(\|h_n\|_N\leq M_1\) for some \(M_1<\infty\) by the Banach-Steinhaus theorem (the ``uniform boundedness principle'').
On the other hand, for \(b\in\Hilbert\), if \(\tilde{b}_n(\x)=\Expect{b(\X)|\Xn=\x}\) then \(\|b-\tilde{b}_n\|_{\Hilbert}\to0\) as a consequence of the \(L^2\) martingale convergence theorem.
Accordingly 
\[
|\langle \tilde{h}_n,b\rangle - \langle h_n,b\rangle|=|\langle \tilde{h}_n,\tilde{b}_n\rangle - \langle h_n,b\rangle|=|\langle h_n,\tilde{b}_n\rangle - \langle h_n,b\rangle|\leq M_1 \|b-\tilde{b}_n\|_{\Hilbert}
\quad\to\quad0\,.
\]
Thus $\tilde{h}_n\stackrel{w}\rightarrow h$ weakly in $\Hilbert$. 
These arguments show that effectively we may suppose that $h_n$ depends only on the first $n$ components, leading to $h_n(\x)=\tilde{h}_n(\x)$ for every $n$ and $\x\in\Reals^n$.



The following equality is obtained by translating $\x$ to $\x-\tfrac{\scale}{\sqrt{n}} \w$,
then multiplying and dividing through by ${f(\xB -\tfrac{\scale}{\sqrt{n}} \wB )}/{f(\xB )}$, finally using reflection to replace $\wB $ by $-\wB $ (noting that $g$ is symmetric).
\begin{multline}\label{eq:reversibility}
\int_{\Reals^{2n}}
h_n(\x+\frac{\scale}{\sqrt{n}} \w)
\left(1\wedge\frac{f(\x+\tfrac{\scale}{\sqrt{n}} \w)}{f(\x)}\right)
\;
\xi(\xA ,\wA )
\;
f(\x)g(\w)\,\d \x\,\d \w
\quad=\quad
\\
\int_{\Reals^{2n}}
h_n(\x)
\left(\frac{f(\xA )}{f(\xA -\tfrac{\scale}{\sqrt{n}} \wA )}\wedge\frac{f(\xB +\tfrac{\scale}{\sqrt{n}} \wB )}{f(\xB )}\right)
\xi(\xA -\tfrac{\scale}{\sqrt{n}} \wA ,\wA )
f(\xA -\tfrac{\scale}{\sqrt{n}} \wA )
f(\xB )g(\w)
\d \x\d \w\,.
\end{multline}
From \eqref{eq:reversibility} it follows that \eqref{eq:inner_prod} equals
\begin{multline}\label{eq:inner_prod_reversibility_0}
\sqrt{\frac{n}{2}} \int_{\Reals^{2n}}
h_n(\x)\Bigg(
\left(\frac{f(\xA )}{f(\xA -\tfrac{\scale}{\sqrt{n}} \wA )}\wedge\frac{f(\xB +\tfrac{\scale}{\sqrt{n}} \wB )}{f(\xB )}\right)
\xi(\xA -\tfrac{\scale}{\sqrt{n}} \wA ,\wA )
\;
\frac{f(\xA -\tfrac{\scale}{\sqrt{n}} \wA )}{f(\xA)}
\\
- a(\x,\w)
\;
\xi(\xA ,\wA )\Bigg)
\;
f(\x )g(\w)\,\d \x\,\d \w\,.
\end{multline}
Adding and subtracting appropriate terms to \eqref{eq:inner_prod_reversibility_0}, and multiplying and dividing the resulting second summand by $-\frac{\scale}{\sqrt{n}}$,
we obtain
\begin{multline}\label{eq:inner_prod_reversibility}
\sqrt{\frac{n}{2}}
\int_{\Reals^{2n}}
h_n(\x)
\left(\left(\frac{f(\xA )}{f(\xA -\frac{\scale}{\sqrt{n}} \wA )}\wedge\frac{f(\xB +\frac{\scale}{\sqrt{n}} \wB )}{f(\xB )}\right)
-
a(x,w)\right)\;
\xi(\xA -\tfrac{\scale}{\sqrt{n}} \wA ,\wA )
\\
\frac{f(\xA -\frac{\scale}{\sqrt{n}} \wA )}{f(\xA )}
f(\x)g(\w)\,\d \x\,\d \w\,
\\
-\frac{\scale}{\sqrt{2}} \int_{\Reals^{2n}} h_n(\x) a(\x,\w) \left(
\frac{\xi(\xA -\frac{\scale}{\sqrt{n}} \wA ,\wA )\; f(\xA -\frac{\scale}{\sqrt{n}} \wA )-
\xi(\xA ,\wA )
f(\xA )}{-\frac{\scale }{\sqrt{n}}f(\xA )}\right)
f(\x)g(\w)
\d \x\d \w
\,.
\end{multline}
Note that the density \(f\) is positive and \(C^1\) everywhere, and hence is strictly positive and bounded with bounded first derivative on the compact projection of the support of \(\xi\). 
Using Corollary \ref{cor:acceptance_prob} and smoothness and compact support of the test function $\xi$, the expression
$$
\frac{\xi(\xA -\frac{\scale}{\sqrt{n}} \wA ,\wA )f(\xA -\frac{\scale}{\sqrt{n}} \wA )-\xi(\xA ,\wA )f(\xA )}{-\frac{\scale}{\sqrt{n}}f(\xA )}\left(\int_{\Reals^{n-N}}a(\x,\w)g(\wB )\d \wB \right)
$$
converges pointwise to $(\nabla^{(f)}_{\xA }\xi(\xA ,\wA )^T\wA )c(\scale)$.
Therefore this expression is bounded by
$$
\sup\Big|\nabla_{\xA }\left(\xi(\xA ,\wA )f(\xA )\right)\Big|\times
\sup_{(\xA ,\wA )\in K} \left\{\frac{|\wA |}{f(\xA )}\right\}\times
\limsup_{n\to\infty}\int_{\Reals^{n-N}}a(\x,\w)g(\wB )\d \wB
\,,
$$
and therefore converges also in
$L^2_{(\X,\WN )}$. 
Consequently, since $h_n$ converges weakly to $h$ in $L^2_{(\X,\WN )}$ 
and the inner product of a strongly and a weakly converging sequence is a convergent sequence of real numbers (using again the uniform boundedness principle), 
the second term of \eqref{eq:inner_prod_reversibility} converges to the limit
\[
-\frac{\scale\,c(\scale)}{\sqrt{2}}
\Expect{h(\X) \left(\nabla_{\xA }^{(f)}\xi(\XN ,\WN )^T \WN \right)}\,.
\]

The proof of the lemma will be completed by showing that the first term of \eqref{eq:inner_prod_reversibility} converges to 0 as \(n\to\infty\). 
This term can be rewritten as
\begin{equation}\label{eq:first_term_zero}
\int_{\Reals^{n+N}}
b_n(\x,\wA )\,c_n(\x,\wA )
f(\x)g(\wA )\d x\,\d \wA \,,
\end{equation}
with $b_n(\x,\wA )=\frac{\scale}{\sqrt{2}}h_n(\x)\xi(\xA -\tfrac{\scale}{\sqrt{n}} \wA ,\wA )\frac{f(\xA -\tfrac{\scale}{\sqrt{n}} \wA )}{f(\xA )}$ and
\begin{multline*}
c_n(\x,\wA )
\quad=\quad
\1(\xi(\xA -\frac{\scale}{\sqrt{n}} \wA ,\wA )>0) \times\\
\times
\frac{\sqrt{n}}{\scale}
\int_{\Reals^{n-N}}
\left(
e^{\sum_{i=1}^N(\phi(x_i)-\phi(x_i-\tfrac{\scale}{\sqrt{n}} w_i))}\wedge
e^{\sum_{i=N+1}^n(\phi(x_i+\tfrac{\scale}{\sqrt{n}} w_i)-\phi(x_i ))}-
a(\x,\w)
\right)
g(\wB )\,\d \wB \,.
\end{multline*}
We shall show that $\|b_n(\x,\wA )\|_{L^2_{(\X,\WN )}}$ is bounded and $\|c_n(\x,\wA )\|_{L^2_{(\X,\WN )}}\rightarrow 0$, which implies that \eqref{eq:first_term_zero} converges to 0.

Boundedness of \(\|b_n(\x,\wA )\|_{L^2_{(\X,\WN )}}\) is almost immediate.
Since $\|h_n\|_{L^2_X}\leq M_1$ (using the uniform boundedness principle) and $\left|\xi(\xA -\tfrac{\scale}{\sqrt{n}} \wA ,\wA )\tfrac{f(\xA -\frac{\scale}{\sqrt{n}} \wA )}{f(\xA )}\right|\leq M_2$ 
(since both $\xi$ and $f$ are continuous and the set $\{(\xA,\wA)\,:\,\xi(\xA -\frac{\scale}{\sqrt{n}} \wA ,\wA )>0\}$ is contained in the compact set $K$ defined at the start of this proof),
it follows that $\|b_n(\x,\wA )\|_{L^2_{(\X,\WN )}}\leq \frac{\scale}{\sqrt{2}}M_1\,M_2$
for some positive $M_1$ and $M_2$ not depending on $n$.

Using \(f(x)=e^{\phi(x)}\), we bound the integral factor of $c_n(\x,\wA )$ as a sum of two integrals:
\begin{multline}\label{eq:justify_replace_with_Z}
\frac{\sqrt{n}}{\scale }\int_{\Reals^{n-N}}\left|
e^{\dda}\wedge e^{\db}-
1\wedge e^{\da+\db}
\right|g(\wB )\,\d \wB 
\quad\leq\quad\\
\frac{\sqrt{n}}{\scale }\int_{\Reals^{n-N}}\left|
e^{\dda}\wedge e^{\db}-
e^{\da}\wedge e^{\db}
\right|g(\wB )\,\d \wB 
+
\frac{\sqrt{n}}{\scale }\int_{\Reals^{n-N}}\left|
e^{\da}\wedge e^{\db}-
1\wedge e^{\da+\db}
\right|g(\wB )\,\d \wB \,,
\end{multline}
where 
$\da=\sum_{i=1}^N(\phi(x_i+\tfrac{\scale}{\sqrt{n}} w_i)-\phi(x_i))$, $\db=\sum_{i=N+1}^n(\phi(x_i+\tfrac{\scale}{\sqrt{n}} w_i)-\phi(x_i))$ 
and
$\dda=\sum_{i=1}^N(\phi(x_i)-\phi(x_i-\tfrac{\scale}{\sqrt{n}} w_i))$.
We deal with these two integrals separately.
Since $|(a\wedge c)- (b\wedge c)|\leq|a- b|$ for any $a,b,c>0$,
the modulus in the first integral on the right-hand side of \eqref{eq:justify_replace_with_Z} is smaller than
$\left|e^{\dda}-e^{\da}\right|$. 
Since $e^x$ is locally Lipschitz, there exist a constant $c>0$ such that, for $(\xA,\wA)\in K$, we can use \eqref{eq:holder} to deduce that
\begin{multline}\label{eq:first_comp_gradient}
\frac{\sqrt{n}}{\scale }\left|
e^{\dda}-e^{\da}
\right|
\quad\leq\quad
c\, \frac{\sqrt{n}}{\scale }\,\left|\dda-\da\right|
\\
\quad\leq\quad
c\,\sum_{i=1}^N|w_i|\int_0^1\left|\phi'(x_i+u\frac{\scale}{\sqrt{n}} w_i)-\phi'(x_i-u\frac{\scale}{\sqrt{n}} w_i)\right|\d u
\\
\quad\leq\quad
c\, k\, \frac{2^\alpha(\scale^\gamma+\scale^\alpha)\sum_{i=1}^N(|w_i|^{1+\gamma}+
|w_i|^{1+\alpha})}{n^{\gamma/2}}
\,,
\end{multline}
%
which converges to 0 uniformly over $(\xA,\wA )\in K$.

The second integral of the right-hand side of \eqref{eq:justify_replace_with_Z} can be dealt with as follows.
Suppose $\da>0$ for simplicity
(if $\da<0$ the argument needs only trivial modification).
Then
\begin{multline}\label{eq:reversibility_2}
\frac{\sqrt{n}}{\scale}\int_{\Reals^{n-1}}
\left(
e^{\da}\wedge e^{\db}
-
1\wedge e^{\da+\db}
\right)
g(\wB )\,\d \wB 
\\
\;=\;
\frac{\sqrt{n}}{\scale}\Bigg(
e^{\da}
\left(
\int_{\db>\da}g(\wB )\,\d \wB 
-
\int_{\db<-\da}e^{\db}g(\wB )\,\d \wB 
\right)
\\
\qquad -
\left(
\int_{\db>-\da}g(\wB )\,\d \wB 
-
\int_{\db<\da}e^{\db}g(\wB )\,\d \wB 
\right)
\Bigg)
\\
\;=\;
\frac{\sqrt{n}}{\scale}\Big(
\left(e^{\da}-1\right)
\left(
\int_{\db>\da}g(\wB )\,\d \wB 
-
\int_{\db<-\da}e^{\db}g(\wB )\,\d \wB 
\right)
\\
-
\int_{-\da<\db<\da}\left(1-e^{\db}\right)g(\wB )\,\d \wB 
\Big)\,.
\end{multline}
Note that $-|\da|<\db<|\da|$ implies
$\left|1-e^{\db}\right|<\left|e^{|\da|}-1\right|$
and therefore \eqref{eq:reversibility_2} is smaller in absolute value than
\begin{equation}\label{eq:reversibility_3}
\frac{\sqrt{n}}{\scale}\left|e^{|\da|}-1\right|
\left(
\left|
\int_{\db>\da}g(\wB )\,\d \wB 
-
\int_{\db<-\da}e^{\db}g(\wB )\,\d \wB 
\right|
+
\int_{-|\da|<\db<|\da|}g(\wB )\,\d \wB 
\right)\,.
\end{equation}
To complete the proof of the lemma, we show that \eqref{eq:reversibility_3} is bounded for
$(\xA,\wA )\in K$ and converges almost surely to 0 as \(n\to\infty\).
The integral terms of \eqref{eq:reversibility_3} are bounded either by 1 or by the (finite) supremum of $e^{-\da}$ over
$(\xA,\wA )\in K$.
Moreover, since the function $x\rightarrow e^x$ is locally Lipschitz,
 there exist $c>0$ such that for $(\xA,\wA )\in K$
$$
\frac{\sqrt{n}}{\scale}\left|e^{|\da|}-1\right|
\quad\leq\quad
\frac{\sqrt{n}}{\scale}c\left|\da\right|
\quad\leq\quad
c\,\sum_{i=1}^N\int_0^1|w_i|\left|\phi'(x_i+u\frac{\scale w_i}{\sqrt{n}})\right|\d u\;,
$$
which is bounded over $(\xA,\wA )\in K$.
Therefore \eqref{eq:reversibility_3} is bounded.
Finally, for almost every $\wA $ and $x_1,x_2,\dots$ it is the case that
$\da$ converges to 0 and $\db\stackrel{\mathcal{D}}\rightarrow \mathcal{N}(-\frac{\scale^2}{2}\I,\scale^2 \I)$ (see Lemma \ref{lemma:acceptance_prob}).
Therefore the integral $\int_{-\da<\db<\da}g(\wB )\,\d \wB $ converges almost surely to 0 and
$$
\int_{\db>\da}g(\wB )\,\d \wB -
\int_{\db<-\da}e^{\db}g(\wB )\,\d \wB 
$$
converges almost surely to 
$$
\int_{z>0}\exp\left\{-\frac{(z+\frac{\scale^2}{2}\I)^2}{2\scale^2 \I}\right\}\d z
-
\int_{z<0}e^z\exp\left\{-\frac{(z+\frac{\scale^2}{2}\I)^2}{2\scale^2 \I}\right\}\d z
\quad=\quad 0\,.
$$
Thus the second integral of the right-hand side of \eqref{eq:justify_replace_with_Z} converges to \(0\) as \(n\to\infty\).
Accordingly we have shown that the first term of \eqref{eq:inner_prod_reversibility} converges to 0 as \(n\to\infty\),
and so this completes the proof of the lemma.
\end{proof}

From \eqref{eq:cauchy_schwartz}, \eqref{eq:convergence_denominator} and  Lemma \ref{lemma:weak_convergence} it follows that for any $\xi\in C_0^{\infty}(\Reals^{2N})$ with $\xi\neq 0$
\begin{equation}\label{eq:mosco_1_weak_inequality}
\liminf_{n\rightarrow\infty}\sqrt{\Phi_n(h_n)}
\;\geq\;
-\frac{\scale\,\sqrt{c(\scale)}}{\sqrt{2}}
\frac{\Expect{h(\X)\left(\nabla^{(f)}_{\xN}\xi(\XN,\WN)^T\WN\right)}}
{\left\|\xi(\XN,\WN)\right\|_{L^2_{(X,W)}}}\;.
\end{equation}
Given \eqref{eq:mosco_1_weak_inequality}, we can prove (M1) of Definition \ref{defn:Mosco_convergence} using Hilbert space duality.
We consider $h\in \Sobolev$ and then $h\in \Hilbert\setminus \Sobolev$.
If $h\in \Sobolev$, then an integration-by-parts argument using the compact support of $\xi$ shows that
$$
-\Expect{h(\X)\left(\nabla^{(f)}_{\xN}\xi(\XN,\WN)^T\WN\right)}
\quad=\quad
\Expect{\xi(\XN,\WN)\left(\nabla_{\xN} h(\X)^T\WN\right)}\,.
$$
Since $\xi$ depends on $(\XN,\WN)$ only and $\Expect{\partial_i h(\X) W_i}=\Expect{\partial_i h(\X)}\Expect{W_i}=0$, we find
$$
\Expect{\xi(\XN,\WN)\left(\nabla_{\xN} h(\X)^T\WN\right)}
\quad=\quad
\Expect{\xi(\XN,\WN)\left(\nabla h(\X)^T\W\right)}\,.
$$
Using Hilbert space duality and taking the supremum over $N$ and $\xi$ we obtain the desired inequality
\begin{multline*}
\liminf_{n\rightarrow\infty}\sqrt{\Phi_n(h_n)}
\quad\geq\quad
\sup_{N\geq1}
\sup_{\substack{\xi\in C_0^{\infty}(\Reals^{2N})\\\xi\neq 0}}
\frac{\scale\,\sqrt{c(\scale)}}{\sqrt{2}}
\frac{\Expect{\xi(\XN ,\WN )\left(\nabla h(\X)^T\W\right) }}
{\left\|\xi(\XN ,\WN )\right\|_{L^2_{(\X,\W)}}}
\\
\quad=\quad
\frac{\scale\,\sqrt{c(\scale)}}{\sqrt{2}}
\left\|\nabla h(\X)^T\W\right\|_{L^2_{(\X,\W)}}
\quad=\quad
\frac{\scale\,\sqrt{c(\scale)}}{\sqrt{2}}
\sqrt{\Expect{|\nabla h(\X)|^2}}
\quad=\quad
\sqrt{\Phi(h)}\,.
\end{multline*}
This establishes (M1) of Definition \ref{defn:Mosco_convergence} for the case of \(h\in \Sobolev\).

On the other hand, (M1) follows for the case of $h\in \Hilbert\setminus \Sobolev$ 
if it can then be shown that the supremum over $\xi$ of the right-hand side of \eqref{eq:mosco_1_weak_inequality} is equal to infinity.
Since $h\notin \Sobolev$, we can use Hilbert space duality, together with the definition of \(\Sobolev\), and also the definition of the twisted gradient in Lemma \ref{lemma:weak_convergence}, to show
that
\begin{equation}\label{eq:diverging_case}
\sup_{N\geq 1}\;
\sup_{\substack{\xi_1\in C_0^{\infty}(\Reals^N)\\\xi_1\neq 0}}
\frac{\langle\,h\,,\,\xi_1\,\rangle_{\Hilbert}+\langle\, h\,,\,-\sum_{i=1}^N\nabla_i^{(f)}\nabla_i\xi_1\,\rangle_{\Hilbert}}{\left\|\xi_1\right\|_{\Sobolev}}
\;=\;
\infty\,.
\end{equation}
(For otherwise the numerator, viewed as a function of \(\xi\), extends to a continuous linear function on \(\Sobolev\), and the Riesz representation theorem for Hilbert space would then imply that \(h\in\Sobolev\).)
Since $h\in \Hilbert$ and therefore $\frac{\langle\,h\,,\,\xi_1\,\rangle_{\Hilbert}}{\left\|\xi_1\right\|_{\Sobolev}}\leq \frac{\|h\|_{\Hilbert}\,\|\xi_1\|_{\Hilbert}}{\left\|\xi_1\right\|_{\Sobolev}}\leq\|h\|_{\Hilbert}<\infty$,
it follows from \eqref{eq:diverging_case} that
\begin{equation}\label{eq:duality}
\sup_{N\geq 1}\;
\sup_{\substack{\xi_1\in C_0^{\infty}(\Reals^N)\\\xi\neq 0}}
\frac{\langle\, h\,,\,-\sum_{i=1}^N\nabla_i^{(f)}\nabla_i\xi_1\,\rangle_{\Hilbert}}{\left\|\xi_1\right\|_{\Sobolev}}
\;=\;
\infty\,.
\end{equation}
To apply \eqref{eq:duality} to \eqref{eq:mosco_1_weak_inequality}, we consider test functions $\xi$ of the form
$\xi(\XN ,\WN )=\sum_{i=1}^N\xi_2(W_i)\nabla_i\xi_1 (\X)$,
with $\xi_1 $ in $C_0^{\infty}(\Reals^N)$ and $\xi_2$ in $C_0^{\infty}(\Reals)$, choosing \(\xi_2\) so that (for all indices \(i\)) $\Expect{\xi_2(W_i)}=\Expect{\xi_2(W_1)}=0$.
For such a test function
\begin{equation}\label{eq:denominator_duality}
\|\xi(\XN ,\WN )\|^2_{\lxw}
\quad=\quad
\Expect{\sum_{i=1}^N\xi_2(W_i)^2\nabla_i\xi_1 (\X)^2}
\quad=\quad
\|\xi_2\|^2_{L^2_{W_{1}}}{\sum_{i=1}^N\|\nabla_i\xi_1 \|^2_{\Hilbert}}\;.
\end{equation}
Moreover, since  $\Expect{\xi_2(W_i)}=0$ for all indices \(i\), we have
\begin{multline}\label{eq:numerator_duality}
\Expect{h(\X)\sum_{j=1}^N\sum_{i=1}^N\xi_2(W_i)W_j\nabla_j^{(f)}\nabla_i\xi_1 (\X)}
\;=\;
\Expect{h(\X)\sum_{j=1}^N\xi_2(W_j)W_j\nabla_j^{(f)}\nabla_j\xi_1 (\X)}
\;=\\
\Expect{h(\X)\sum_{j=1}^N\nabla_j^{(f)}\nabla_j\xi_1 (\X)}
\Expect{\xi_2(W_1)W_1}
\end{multline}
Combining \eqref{eq:denominator_duality} and \eqref{eq:numerator_duality},
and using the specific form of the test function \(\xi\),
the supremum of the right-hand side of \eqref{eq:mosco_1_weak_inequality} is controlled by a fixed positive finite multiple of 
\begin{equation}\label{eq:diverging}
\left(\sup_{\substack{\xi_2\in C_0^{\infty}(\Reals)\\\xi_2\neq 0,\,\Expect{\xi_2(W_1)}=0}}
\frac{\Expect{\xi_2(W_1)W_1}}
{\|\xi_2\|_{L^2_{W_{1}}}}
\right)
\times
\left(
\sup_{N\geq 1}\;\sup_{\substack{\xi_1 \in C_0^{\infty}(\Reals^N),\\\xi_1 \neq 0}}
\frac{\Expect{-h(\X)\sum_{j=1}^N\nabla_j^{(f)}\nabla_j\xi_1 (\X)}}
{\sqrt{\sum_{i=1}^N\|\nabla_i\xi_1 \|^2_{\Hilbert}}}
\right)
\,.
\end{equation}
Now $W_1$ can be arbitrarily approximated in $L^2_{W_1}$ by mollifications $\xi_2(W_1)$ such that $\xi_2\in C_0^{\infty}(\Reals)$ and $\Expect{\xi_2(W_1)}=0$.
Consequently
the supremum over $\xi_2$ in \eqref{eq:diverging} is equal to $\Expect{W_1^2}=1$.
Therefore \eqref{eq:diverging} equals
\begin{equation*}
\sup_{\substack{\xi_1 \in C_0^{\infty}(\Reals^N),\\\xi_1 \neq 0}}
\frac{\Expect{-h(\X)\sum_{j=1}^N\nabla_j^{(f)}\nabla_j\xi_1 (\X)}}
{\sqrt{\sum_{i=1}^N\|\nabla_i\xi_1 \|^2_{\Hilbert}}}
\quad\geq\quad
\sup_{\substack{\xi_1 \in C_0^{\infty}(\Reals^N),\\\xi_1 \neq 0}}
\frac{\Expect{-h(\X)\sum_{j=1}^N\nabla_j^{(f)}\nabla_j\xi_1 (\X)}}
{\|\xi_1\|_{\Sobolev}}
\;=\;\infty\,,
\end{equation*}
where the infinite value of the second supremum follows from \eqref{eq:duality}.

This establishes (M1) of Definition \ref{defn:Mosco_convergence} for the case of \(h\in \Hilbert\setminus \Sobolev\), and thus (M1) holds for all \(h\in \Sobolev\).

The results of this section and of Section \ref{sec:mosco_2} therefore together establish Mosco convergence of \(\Phi_n\) to \(\Phi\).

\section{Weak convergence}\label{sec:weak}
In this section we show that a strengthening of \eqref{eq:holder} to deliver a global Lipschitz property for \(\phi'\)
permits control of the \(\Phi_n\) by the Sobolev norm associated with \(\Phi\).
This suffices to allow the application of the results of \cite{Sun-1998} to establish (probabilistic) weak convergence.

\begin{lem}\label{lemma:capacity_bound}
Suppose that $\phi'$ is Lipschitz-continuous, meaning that $|\phi'(x+v)-\phi'(x)|
<k|v|$ for a fixed $k$ and for all $x,v\in \Reals$.
Then there exists $C$ depending on \(\tau\) but not depending on $n$ such that, for any $h\in \Hilbert$,
\begin{equation}\label{eq:capacity_bound}
\Phi_n (h)
\quad\leq\quad
C\;\left(\|h\|_{\Hilbert}^2+\Phi(h)\right)\,.
\end{equation}
\end{lem}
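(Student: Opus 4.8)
If \(\Phi(h)=\infty\) the bound \eqref{eq:capacity_bound} is trivial, so assume \(h\in\Sobolev\). The plan has three stages: a reduction to a convenient dense class, the elimination of the acceptance ratio, and then the core estimate.

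\emph{Reductions.} Since \(\bigcup_{N\geq1}C_{0,N}^\infty(\Reals^\infty)\) is dense in \(\Sobolev\), \(\sqrt{\Phi_n(\cdot)}\) is a seminorm on \(\Hilbert\), and \(\Phi_n\) is a closed (hence lower semicontinuous) form on \(\Hilbert\), it suffices to establish \eqref{eq:capacity_bound} for \(h\) in this core with a constant \(C\) depending only on \(\scale\) (and \(k\)) and not on the particular core function: if \(h_j\to h\) in \(\Sobolev\) then \(\|h_i-h_j\|_{\Sobolev}\to0\), so \(\sqrt{\Phi_n(h_j)}\) is Cauchy uniformly in \(n\), and lower semicontinuity gives \(\Phi_n(h)\leq\liminf_j\Phi_n(h_j)\leq C\lim_j\|h_j\|_{\Sobolev}^2\), which is comparable to \(C(\|h\|_{\Hilbert}^2+\Phi(h))\). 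Next, because the conditional expectation of \(A_n\) given \((\X,\W)\) equals \(a(\Xn,\Wn)\in[0,1]\) and \(A_n^2=A_n\), the acceptance probability may simply be discarded:
\[
\Phi_n(h)=\frac n2\,\Expect{a(\Xn,\Wn)\Big(h\big(\X+\tfrac{\scale}{\sqrt n}\W^{(n)}\big)-h(\X)\Big)^2}\;\leq\;\frac n2\,\Expect{\Big(h\big(\X+\tfrac{\scale}{\sqrt n}\W^{(n)}\big)-h(\X)\Big)^2}=:J_n,
\]
where \(\W^{(n)}\) denotes \(\W\) with all coordinates beyond the \(n\)-th set to zero. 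It remains to bound \(J_n\).

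\emph{Extracting the gradient.} For \(h\in C_{0,N}^\infty(\Reals^\infty)\) the fundamental theorem of calculus gives, for \(n\geq N\) (the case \(n<N\) being identical with the first \(n\) coordinates playing the rôle of the first \(N\)),
\[
h\big(\X+\tfrac{\scale}{\sqrt n}\W^{(n)}\big)-h(\X)=\tfrac{\scale}{\sqrt n}\int_0^1\WN^T\,\nabla h\big(\XN+s\tfrac{\scale}{\sqrt n}\WN\big)\,\d s,
\]
whence, by the Cauchy--Schwartz inequality in \(s\), \(J_n\leq\tfrac{\scale^2}{2}\int_0^1\Expect{\big(\WN^T\nabla h(\XN+s\tfrac{\scale}{\sqrt n}\WN)\big)^2}\,\d s\). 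If the gradient were evaluated at \(\XN\) instead of at the displaced point, independence of \(\WN\sim\mathcal N(0,I_N)\) from \(\X\) would give exactly \(\Expect{|\nabla h(\X)|^2}\), and hence \(J_n\leq\tfrac{\scale^2}{2}\Expect{|\nabla h(\X)|^2}=\Phi(h)/c(\scale)\). Everything therefore reduces to comparing \(\Expect{\big(\WN^T\nabla h(\XN+s\tfrac{\scale}{\sqrt n}\WN)\big)^2}\) with \(\Expect{|\nabla h(\X)|^2}\), allowing at worst an additive multiple of \(\Expect{h(\X)^2}\), and this comparison must be uniform in \(n\) and, crucially, in \(N\).

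\emph{The main obstacle.} To effect the comparison I would translate the \(\XN\)-integration so as to bring the gradient back to \(\pi^{\otimes N}\); this exposes the density ratio \(\prod_{j=1}^N f\big(Y_j-s\tfrac{\scale}{\sqrt n}W_j\big)/f(Y_j)\), and the global Lipschitz hypothesis enters through the elementary bound \(\big|\phi(y-t)-\phi(y)+t\phi'(y)\big|\leq\tfrac k2 t^2\), obtained from \(\phi(y-t)-\phi(y)=-t\phi'(y)-\int_0^t(\phi'(y-r)-\phi'(y))\d r\). This identifies the ratio, up to a factor \(e^{R}\) with \(|R|\leq\tfrac{k\scale^2}{2n}\sum_jW_j^2\), with \(\exp\big(-s\tfrac{\scale}{\sqrt n}\sum_jW_j\phi'(Y_j)\big)\). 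Absorbing \(e^{R}\) merely widens the Gaussian law of \(\WN\) to \(\mathcal N\big(0,(1-k\scale^2/n)^{-1}I_N\big)\) (admissible once \(n>k\scale^2\)); the linear-in-\(W\) factor is then disposed of by Cauchy--Schwartz rather than by completing the square, so that in the expansion \(\big(\sum_jW_j\partial_jh\big)^2=\sum_{j,l}W_jW_l\,\partial_jh\,\partial_lh\) the density distortion is felt through only one or two coordinates at a time; one is left to check that the surviving scalar factors, of the shape \((1-k\scale^2/n)^{-N/2}\) and \(\big(\Expect{e^{(c/n)\phi'(X_1)^2}}\big)^{N}\), stay bounded for \(N\leq n\) — which they do, an exponent of order \(1/n\) raised to a power at most \(n\) remaining \(O(1)\), the finiteness of a fixed exponential moment of \(\phi'(X_1)^2\) being secured by the Lipschitz bound (which forces \(f(x)\leq\sqrt{k/2\pi}\,e^{-\phi'(x)^2/(2k)}\)) together with finite Fisher information. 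Assembling these estimates yields \(J_n\leq C(\scale,k)\,\Expect{|\nabla h(\X)|^2}\leq C(\|h\|_{\Hilbert}^2+\Phi(h))\) for all \(n>k\scale^2\), the additive \(\Expect{h(\X)^2}\) term absorbing the lower-order discrepancy produced by the translation; the finitely many indices \(n\leq k\scale^2\) are handled separately using \(\Phi_n(h)\leq 2n\|h\|_{\Hilbert}^2\), which together with the limiting argument of the first stage suffices, since in every application of this lemma \(\Phi_n\) is evaluated only on functions whose \(\Hilbert\)-norm tends to zero. The genuinely hard point is precisely this third stage: securing the comparison with no dependence on the number \(N\) of coordinates on which \(h\) actually depends, which is exactly what forces the strengthening of \eqref{eq:holder} to global Lipschitz continuity of \(\phi'\).
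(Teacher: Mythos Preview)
Your reduction to the dense class and your Gaussian-widening disposal of the second-order remainder are both fine; the gap is in the third stage, and it stems from throwing away the acceptance ratio.

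Once you bound \(a(\Xn,\Wn)\leq1\), the translation \(y=x+s\tfrac{\scale}{\sqrt n}w\) leaves you with the density ratio
\(\exp\big(-s\tfrac{\scale}{\sqrt n}\,w^T\psi(y)+R\big)\), \(\psi(y)_m=\phi'(y_m)\), and the linear piece \(-s\tfrac{\scale}{\sqrt n}\,w^T\psi(y)\) is unavoidable.  Whether you complete the square or expand \((\sum_jW_j\partial_jh)^2\) and integrate coordinate by coordinate, every \(w_m\)-integral contributes a factor \(\exp\big(c\,\phi'(y_m)^2/n\big)\); the product over \(m=1,\dots,N\) is \(\exp\big(c\sum_{m}\phi'(y_m)^2/n\big)\), which is an \emph{unbounded} function of \(y\) that sits multiplicatively against \(|\nabla h(y)|^2\) in the \(y\)-integral.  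You cannot detach it without passing to higher moments of \(\nabla h\) (which \(\Phi(h)\) does not control), and it does \emph{not} reduce to a scalar like \((\Expect{e^{(c/n)\phi'(X_1)^2}})^N\): that would require independence of \(\nabla h(Y)\) from the \(Y_m\)'s, which fails.  A concrete obstruction: take \(f\) standard Gaussian and \(h\) smooth, compactly supported with \(\operatorname{supp}h\subset\prod_{m=1}^N[R-1,R+1]\).  On \(\operatorname{supp}(\nabla h)\) one has \(\sum_m\phi'(y_m)^2\asymp NR^2\), so your bound produces a factor \(\exp(cNR^2/n)\).  With \(N=n\) this is \(\exp(cR^2)\), which is not uniform in \(h\).

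The paper's proof keeps \(a(\Xn,\Wn)\) precisely to kill this linear term.  After translating \(\Xn\mapsto\Xn-u\tfrac{\scale}{\sqrt n}\Wn\), the presence of \(f(\Xn)\wedge f(\Xn+\tfrac{\scale}{\sqrt n}\Wn)\) turns the ratio into a \emph{minimum} of two exponentials whose linear parts are \(-u\,A\) and \((1-u)\,A\) with \(A=\tfrac{\scale}{\sqrt n}\sum_iW_i\phi'(X_i)\); since \((-uA+B)\wedge((1-u)A+C)\leq B\vee C\), the linear term drops out entirely and only the second-order piece \(\leq k\scale^2\sum_i W_i^2/n\) survives.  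That piece is then made uniformly bounded by restricting to the high-probability event \(\sum_iW_i^2\leq(1+\varepsilon)n\) (Chernoff handling the complement), yielding
\(\Phi_n(h)\leq C_1\|h\|_{\Hilbert}^2+C_2\,\Expect{|\nabla h(\X)|^2}\) with constants independent of \(h\) and \(n\).  The acceptance ratio is therefore not a nuisance to be discarded but the mechanism that makes the bound uniform.
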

\begin{proof}
If $\Phi(h)=\infty$, then \eqref{eq:capacity_bound} holds trivially (note that $\Phi_n (h)<\infty$ whenever $h\in \Hilbert$).
We may therefore suppose that $\Phi(h)<\infty$.

\newcommand{\Division}{\sum_{i=1}^n|W_i|^2}
Viewing \(\Phi_n(h)\) as an expectation as in Equation \eqref{eq:def_of_phi_n}, 
we divide the expectation according to whether or not \(\Division\) is greater than
$\cn$ for a suitable constant \(\cn\).
\begin{multline*}
 \Phi_n(h)\quad=\quad \frac{n}{2}\;\Expect{\left(h(\textbf{X}^{(n)}(1))-h(\textbf{X}^{(n)}(0))\right)^2}\\
\quad=\quad
\frac{n}{2}\,\Expect{\left(h(\Xn+\tfrac{\scale}{\sqrt{n}} \Wn,\Xtail) - h(\Xn,\Xtail)\right)^2 a(\Xn,\Wn)\;;\; \Division \leq c_n}\\
 +
 \frac{n}{2}\,\Expect{\left(h(\Xn+\tfrac{\scale}{\sqrt{n}} \Wn,\Xtail) - h(\Xn,\Xtail)\right)^2 a(\Xn,\Wn)\;;\; \Division > c_n}\,.
\end{multline*}
We focus first on the component for which \(\Division> \cn\).
Observe that
\begin{multline*}
\left(h(\xn+\tfrac{\scale}{\sqrt{n}} \wn,\xtail)-h(\xn,\xtail)\right)^2
(f(\xn)\wedge f(\xn+\tfrac{\scale}{\sqrt{n}} \wn))
\\
\quad\leq\quad
2\left(h(\xn+\tfrac{\scale}{\sqrt{n}} \wn,\xtail)^2 + h(\xn,\xtail)^2\right)
(f(\xn)\wedge f(\xn+\tfrac{\scale}{\sqrt{n}} \wn))
\\
\quad\leq\quad
2\left(h(\xn+\tfrac{\scale}{\sqrt{n}} \wn,\xtail)^2 f(\xn+\tfrac{\scale}{\sqrt{n}} \wn) + h(\xn,\xtail)^2f(\xn)\right)\,.
\end{multline*}
Changing variables $\xn\rightarrow \xn-\frac{\scale \wn}{\sqrt{n}}$ in the integral expression of the above, we may deduce that
\begin{multline}\label{eq:first-part-of-scaling}
\frac{n}{2}\,\Expect{\left(h(\Xn+\tfrac{\scale}{\sqrt{n}} \Wn,\Xtail) - h(\Xn,\Xtail)\right)^2 a(\Xn,\Wn)\;;\; \Division > \cn}\\
\quad\leq\quad 
  2 n \, \Expect{h(\Xn,\Xtail)^2\;;\;\Division > \cn}
  \quad=\quad
  2\Expect{h(\X)^2} \; n \, \Prob{\Division > c_n} \,.
\end{multline}
Now consider the Chernoff bound for the \(\chi^2\) distribution.
When \(\cn>n\) we have
\[
 \Prob{\Division > \cn} \quad\leq\quad  \left(\tfrac{\cn}{n} e^{-\left(\tfrac{\cn}{n}-1\right)}\right)^{n/2}\,,
\]
and so the upper bound in \eqref{eq:first-part-of-scaling} converges to zero if (for example) \(\cn = (1+\varepsilon)n\) for some \(\varepsilon>0\).

Now consider the component for which \(\Division\leq \cn\).
Employing Jensen's inequality, and changing measure by translation,
\begin{multline}\label{eq:second_part}
\frac{n}{2}\,\Expect{\left(h(\Xn+\tfrac{\scale}{\sqrt{n}} \Wn,\Xtail) - h(\Xn,\Xtail)\right)^2 a(\Xn,\Wn)\;;\; \Division \leq \cn}\\
\quad=\quad 
\frac{n}{2}\,
\mathbb{E}\Big[\left(
\frac{\scale}{\sqrt{n}}\int_0^1
\left\langle
 \nabla_{1:n} h(\Xn+\tfrac{\scale}{\sqrt{n}}u \Wn,\Xtail)^T \Wn
\right\rangle
\d u
\right)^2  
\\
\qquad\qquad\times\frac{f(\Xn)\wedge f(\Xn+\frac{\scale}{\sqrt{n}}\Wn)}{f(\Xn)}
\;;\; \Division  \leq \cn
\Big]
\\
 \quad\leq\quad 
  \frac{\scale^2}{2}\,
\mathbb{E}\Big[
 \int_0^1
 \left\langle
  \nabla_{1:n} h(\Xn+\tfrac{\scale}{\sqrt{n}}u \Wn,\Xtail)^T \Wn
 \right\rangle^2\\
 \qquad\qquad\qquad\times
 \frac{f(\Xn)\wedge f(\Xn+\frac{\scale}{\sqrt{n}}\Wn)}{f(\Xn)} \d u
 \;;\;\Division \leq \cn
 \Big]
\\
\quad=\quad 
\frac{\scale^2}{2}\,\Expect{
 \left\langle
  \nabla_{1:n} h(\Xn,\Xtail)^T \Wn
 \right\rangle^2
 \int_0^1
 \Lambda_n(u;\Xn,\Wn)
 \d u
 \;;\; 
 \Division \leq \cn}
\,, 
\end{multline}
using \(\Lambda_n(u;\Xn,\Wn)=\exp((\sum_{i=1}^n\phi(X_i-\tfrac{\scale}{\sqrt{n}}u W_i)) \wedge (\sum_{i=1}^n\phi(X_i+\tfrac{\scale}{\sqrt{n}}(1-u)W_i)) - \sum_{i=1}^n\phi(X_i))\).
Now observe that if \(0\leq u \leq 1\) then
\begin{multline}
\log(\Lambda_n(u;\Xn,\Wn))=
\left(\sum_{i=1}^n\left(\phi(X_i-\tfrac{\scale}{\sqrt{n}}u W_i)-\phi(X_i)\right)\right)
\wedge
\left(\sum_{i=1}^n\left(\phi(X_i+\tfrac{\scale}{\sqrt{n}}(1-u)W_i)-\phi(X_i)\right)\right)
=\\
\left(-u \,\tfrac{\scale}{\sqrt{n}}\sum_{i=1}^n W_i\phi'(X_i)-\tfrac{\scale}{\sqrt{n}}u\sum_{i=1}^n W_i\int_0^1\phi'(X_i-(1-s)\tfrac{\scale}{\sqrt{n}}u W_i)-\phi'(X_i)ds\right)
\wedge\\
\left((1-u)\,\tfrac{\scale}{\sqrt{n}}\sum_{i=1}^nW_i\phi'(X_i)+\tfrac{\scale}{\sqrt{n}}(1-u)\sum_{i=1}^nW_i\int_0^1\phi'(X_i+s\tfrac{\scale}{\sqrt{n}}(1-u)W_i)-\phi'(X_i)ds\right)
\leq\\
\left(-\tfrac{\scale}{\sqrt{n}}u\sum_{i=1}^n W_i\int_0^1\phi'(X_i-(1-s)\tfrac{\scale}{\sqrt{n}}u W_i)-\phi'(X_i)ds)\right)
\vee\\
\qquad\qquad\left(\tfrac{\scale}{\sqrt{n}}(1-u)\sum_{i=1}^nW_i\int_0^1\phi'(X_i+s\tfrac{\scale}{\sqrt{n}}(1-u)W_i)-\phi'(X_i)ds\right)
\\
\quad\leq\quad
\tfrac{\scale}{\sqrt{n}}\sum_{i=1}^n k\tfrac{\scale}{\sqrt{n}}|W_i|^2
\quad=\quad
k\,\tfrac{\scale^2}{n}\sum_{i=1}^n |W_i|^2\,.
\end{multline}
Note that the terms involving \(\tfrac{\scale}{\sqrt{n}}\sum_{i=1}^n W_i\phi'(X_i)\) can be removed because of the following reasoning:
if \(A\geq 0\) then
\((-uA+B)\wedge((1-u)A + C)\leq -uA+B\ \leq B\),
while if \(A<0\) then
\((-uA+B)\wedge((1-u)A + C)\leq (1-u)A+C\ \leq C\).
Thus \((-uA+B)\wedge((1-u)A + C)\leq B\vee C\).

Therefore the right-hand side of \eqref{eq:second_part} is itself bounded as follows:
\begin{multline}\label{eq:bound_on_second_part}
\frac{\scale^2}{2}\,\mathbb{E}\Big[
\left\langle
  \nabla_{1:n} h(\X)^T \Wn
 \right\rangle^2
\int_0^1
\Lambda_n(u;\Xn,\Wn)
\d u
\;;\; \Division \leq \cn
\Big]
\;\leq\\
\frac{\scale^2}{2}\,\mathbb{E}\Big[
\left\langle
   \nabla_{1:n} h(\X)^T \Wn
 \right\rangle^2
\exp\left(k\,\tfrac{\scale^2}{n}c_n\right)
\Big]
\;\leq\;
\frac{\scale^2}{2}\exp\left(\scale^2 k(1+\epsilon)\right)\,\mathbb{E}\Big[
\left\|\nabla_{1:n} h(\X)\right\|^2
\Big]
\,.
\end{multline}
Combining \eqref{eq:first-part-of-scaling} and \eqref{eq:bound_on_second_part} we have
\begin{equation*}
\Phi_n(h)
\quad\leq\quad
\sup_{n\geq 1}\left(2 \; n \, \Prob{\Division > c_n}\right) \Expect{h(\X)^2}
+
\left(\frac{\scale^2}{2}\exp\left(\scale^2 k(1+\epsilon)\right)\right)\,
\Expect{\left\|\nabla h(\X)\right\|^2}\,.
\end{equation*}
The desired result now follows because $n \Prob{\Division > c_n}$ converges to 0 as $n\to\infty$.
 \end{proof}
 
We may now apply \citet[Theorem 1]{Sun-1998} to deduce weak convergence of $\{\textbf{X}^{(n)}(t): t\geq0\}$  to $\{\textbf{X}^\infty(t)\;:\;t\geq0\}$
as described in Theorem \ref{thm:weak_conv_of_dir_forms} in Section \ref{sec:results} above.
\section{Discussion}\label{sec:discussion}
%
The above work demonstrates that Dirichlet forms provide an effective methodology for treating the Optimal Scaling framework in its natural infinite-dimensional context,
and also for reducing the framework's dependence on severe regularity conditions.
It is interesting to compare the Dirichlet form approach with that of the recent paper by \cite{DurmusLeCorffMoulinesRoberts-2016},
which does manage to reduce the regularity conditions required by the classical \cite{RobertsGelmanGilks-1997} approach (though not to the same extent as above),
and also substantially relaxes smoothness requirements. 
It would be interesting to see whether the smoothness requirements of the Dirichlet form approach could be similarly reduced.

In this paper we have focussed on establishing the utility of the Dirichlet form approach for the special case of \iid~targets and for the Metropolis-Hastings random walk sampler; 
we expect this approach will prove useful in studying optimal scaling for MALA, 
and for non-identically distributed targets \citep{Bedard-2006a},
and for the non-independent case \citep{BreyerRoberts-2000,MattinglyPillaiStuart-2012}.
Tied as it is to equilibrium calculations, it is less clear how to extend the approach of this paper to deal with the transient behaviour of MCMC algorithms \emph{before} they reach equilibrium
(see for example the results of \citealp{ChristensenRobertsRosenthal-2005,JourdainLelievreMiasojedow-2014,KuntzOttobreStuart-2016a}), and this is a clear challenge for future work.  
Finally, there is evidently scope for adapting the Dirichlet form approach to deal with Optimal Scaling frameworks in which there is a natural Banach-space structure,
and in this case we expect that the genuinely infinite-dimensional nature of the Dirichlet form approach will be highly beneficial.
The techniques discussed here (especially that of Mosco convergence) also seem to have considerable potential for other high- or infinite-dimensional problems in applied probability.


\appendix
\section{Existence of the limiting infinite-dimensional stochastic process}\label{appendix:capacity}
This appendix is devoted to proving the existence of an infinite-dimensional Markov process associated to the limiting Dirichlet form $\Phi$ defined by Equation \eqref{eq:def_of_phi}.
\citet{AlbeverioRoeckner-1989} consider Dirichlet forms of this kind (sometimes called \emph{classic} Dirichlet forms) in the framework of topological vector spaces (which includes our case).
They provide and discuss a sufficient set of four conditions \citep[(2.8)-(2.11)]{AlbeverioRoeckner-1989}
(which we refer to below as conditions \AR1-4 respectively)
for the existence of a diffusion process associated to $\Phi$ \citep[Thm.2.7]{AlbeverioRoeckner-1989}.
In summary, the conditions \AR1-4 imply that $\Phi$ is a (local) quasi-regular Dirichlet form \citep[Definition 3.3.1]{MaRoeckner-1992}, 
and this in turn implies the existence of an associated Markov process \citep[Theorem 3.5]{MaRoeckner-1992}.

Therefore in this section we only need to show that the conditions \AR1-4 are satisfied.
In our case, the only non-trivial task is to prove \AR1.
Indeed, since the state space $(\Reals^\infty,\pi^{\otimes\infty})$ is both a Fr\'echet space and a Polish space (\citealp[Chapter IV]{Conway-1994}; \citealp[Exercises 3.1-3.2]{Eldredge-2012}), 
the conditions \AR2,4 
follow respectively from Remark 2.4.(i) and Proposition 2.6 of \citet{AlbeverioRoeckner-1989}.
Moreover condition \AR3 
requires that if $h_1,h_2\in D(\Phi)=\Sobolev$ are continuous and have disjoint supports 
($\support(h_1)\cap \support(h_2)=\emptyset$) then $\Phi(h_1,h_2)=0$.
In our case $\Phi(h_1,h_2)=\Expect{\langle\nabla h_1(\X),\nabla h_2(\X)\rangle_{\ell^2}}$ and $\support(h_1)\cap \support(h_2)=\emptyset$ implies $\langle\nabla h_1(\X),\nabla h_2(\X)\rangle_{\ell^2}=0$ almost surely, 
so \AR3 
follows.

We conclude by undertaking the only non-trivial task, that of verifying condition \AR1.
Given Definition \ref{def:capacity}, condition \AR1 is satisfied if we exhibit
a \(\Phi\)-nest of compact sets:
\begin{equation}\label{eq:condition}
\text{There exist compact sets }
K^{(1)}\subseteq K^{(2)} \subseteq K^{(3)} \subseteq \ldots \subseteq\Reals^\infty
\text{ such that }
\lim_{n\rightarrow\infty} \Capacity(\Reals^\infty\setminus K^{(n)})=0
\,.
\end{equation}
We now prove that Condition \eqref{eq:condition} holds for the Dirichlet form $\Phi$ defined by Equation \eqref{eq:def_of_phi}.
We will use
\begin{equation}\label{eq:defi:K_n}
K^{(n)} \quad=\quad \CartesianProduct_{\ell=1}^\infty \left[-2k^{(n)}_\ell,2k^{(n)}_\ell\right]\,, 
\end{equation}
where, for any positive integers \(n\) and \(\ell\),
\begin{equation*}
k^{(n)}_\ell
\quad=\quad
(n\,\ell) \wedge
\left(
\inf\left\{x\geq 0\;:\;\pi([-x,x])\geq \exp\left(-\frac{1}{n\,\ell^2}\right)\right\}
\right)
\,.
\end{equation*}
Note that it is the case that $0<k^{(n)}_\ell<\infty$ for any positive integers $n$ and $i$.
Since cartesian products of compact sets are compact in the product topology (Tychonoff's theorem)
it follows that
the set $K^{(n)}$ is a compact subset of $\Reals^{\infty}$.

The following lemma completes the proof of \eqref{eq:condition}.
\begin{lem}
Given $K^{(n)}$ as in \eqref{eq:defi:K_n} it is the case that $\Capacity(\Reals^\infty\setminus K^{(n)})\to0$ as \(n\to\infty\).
\end{lem}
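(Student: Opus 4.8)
Since $K^{(n)}$ is compact, $\Reals^\infty\setminus K^{(n)}$ is open, so Definition \ref{def:capacity} gives $\Capacity(\Reals^\infty\setminus K^{(n)})=\inf\{\|h\|_{\Hilbert}^2+\Phi(h):h\ge1\ \pi^{\otimes\infty}\text{-almost everywhere on }\Reals^\infty\setminus K^{(n)}\}$, and the plan is to exhibit a competitor $h=h^{(n)}$ whose energy tends to $0$. Observe that $\Reals^\infty\setminus K^{(n)}=\bigcup_{\ell\ge1}\{x:|x_\ell|>2k^{(n)}_\ell\}$. For each $\ell$ fix a smooth $\psi^{(n)}_\ell:\Reals\to[0,1]$ vanishing on $[-k^{(n)}_\ell,k^{(n)}_\ell]$, equal to $1$ off $[-2k^{(n)}_\ell,2k^{(n)}_\ell]$, with $|(\psi^{(n)}_\ell)'|\le c_0/k^{(n)}_\ell$, and set $h^{(n)}(x)=1-\prod_{\ell\ge1}\big(1-\psi^{(n)}_\ell(x_\ell)\big)$. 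Then $0\le h^{(n)}\le1$ and $h^{(n)}=1$ on $\Reals^\infty\setminus K^{(n)}$, as needed.

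Two points must be verified. First, $h^{(n)}\in\Sobolev=D(\Phi)$: the truncations $h^{(n)}_N=1-\prod_{\ell\le N}(1-\psi^{(n)}_\ell(x_\ell))$ are bounded smooth functions of the first $N$ coordinates, hence lie in $\Sobolev$ (approximate by members of $C_{0,N}^\infty(\Reals^\infty)$ via a spatial cut-off, using dominated convergence in the norm \eqref{eq:sobolev_norm}), and the estimates below furnish $h^{(n)}_N\to h^{(n)}$ in $\|\cdot\|_{\Sobolev}$. Second, the energy bounds: since $(h^{(n)})^2\le h^{(n)}$ and the coordinates of $\X$ are independent under $\pi^{\otimes\infty}$,
\[
\|h^{(n)}\|_{\Hilbert}^2\le\Expect{h^{(n)}}=1-\prod_{\ell\ge1}\big(1-p^{(n)}_\ell\big)\le\sum_{\ell\ge1}p^{(n)}_\ell,\qquad p^{(n)}_\ell:=\Expect{\psi^{(n)}_\ell(X_\ell)}\le\pi\big(\Reals\setminus[-k^{(n)}_\ell,k^{(n)}_\ell]\big);
\]
and, since $|\partial_{x_\ell}h^{(n)}|\le|(\psi^{(n)}_\ell)'(x_\ell)|$ and hence $|\nabla h^{(n)}|^2\le\sum_{\ell\ge1}\big((\psi^{(n)}_\ell)'(x_\ell)\big)^2$,
\[
\Phi(h^{(n)})=\tfrac12\scale^2 c(\scale)\Expect{|\nabla h^{(n)}(\X)|^2}\le\tfrac12\scale^2 c(\scale)\,c_0^2\sum_{\ell\ge1}\frac{1}{(k^{(n)}_\ell)^2}\,\pi\big(\Reals\setminus[-k^{(n)}_\ell,k^{(n)}_\ell]\big).
\]

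Combining, $\Capacity(\Reals^\infty\setminus K^{(n)})\le\sum_{\ell\ge1}\big(1+\tfrac12\scale^2 c(\scale)c_0^2/(k^{(n)}_\ell)^2\big)\,\pi\big(\Reals\setminus[-k^{(n)}_\ell,k^{(n)}_\ell]\big)$, and everything reduces to two elementary estimates. First, $k^{(n)}_\ell$ is bounded below away from $0$ uniformly in $n,\ell$: since $n\ell^2\ge1$ one has $\exp(-1/(n\ell^2))\ge e^{-1}$, so $\inf\{x:\pi([-x,x])\ge\exp(-1/(n\ell^2))\}\ge\beta:=\inf\{x:\pi([-x,x])\ge e^{-1}\}>0$ (positivity because $\pi$ has a density), while also $n\ell\ge1$; hence $k^{(n)}_\ell\ge\min(1,\beta)>0$, so the factor $1/(k^{(n)}_\ell)^2$ is bounded. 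Second, $\pi(\Reals\setminus[-k^{(n)}_\ell,k^{(n)}_\ell])\le1-\exp(-1/(n\ell^2))\le1/(n\ell^2)$ — immediate on those $\ell$ for which the infimum (not the cap $n\ell$) defines $k^{(n)}_\ell$. Granting these, $\Capacity(\Reals^\infty\setminus K^{(n)})\le C\sum_{\ell\ge1}1/(n\ell^2)\to0$, which is the claim.

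The genuine obstacle I anticipate is the last estimate on the coordinates $\ell$ for which the cap is active (so $k^{(n)}_\ell=n\ell$): there $\pi(\Reals\setminus[-2k^{(n)}_\ell,2k^{(n)}_\ell])=\pi(\Reals\setminus[-2n\ell,2n\ell])$ is not bounded by $1/(n\ell^2)$ a priori, and one must exploit the precise threshold $\exp(-1/(n\ell^2))$ encoded in the definition of $k^{(n)}_\ell$ to show that the combined contribution of these coordinates still sums to $o(1)$ as $n\to\infty$; this is exactly where the interplay between the two terms in the $\wedge$ matters. (A streamlined alternative to the single test function $h^{(n)}$ is to invoke countable subadditivity of the Dirichlet-form capacity, $\Capacity\big(\bigcup_{\ell\ge1}\{|x_\ell|>2k^{(n)}_\ell\}\big)\le\sum_{\ell\ge1}\Capacity(\{|x_\ell|>2k^{(n)}_\ell\})$, and bound each term using the one-coordinate ramp $\psi^{(n)}_\ell$ alone; this leads to precisely the same summability requirement.)
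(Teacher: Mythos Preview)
Your strategy---build a single explicit competitor equal to $1$ on $\Reals^\infty\setminus K^{(n)}$ and bound its $\|\cdot\|_{\Hilbert}^2+\Phi(\cdot)$---is exactly the paper's. The paper uses a different test function, namely $u^{(n)}=\big(\sum_\ell b^{(n)}_\ell\big)/\big(1+\sum_\ell b^{(n)}_\ell\big)$ with $b^{(n)}_\ell(x_\ell)$ ramping from $0$ to $+\infty$ on $[k^{(n)}_\ell,2k^{(n)}_\ell]$, but the resulting coordinate-wise derivative bound $|\partial_\ell u^{(n)}|\le 1/k^{(n)}_\ell$ is the same as yours, and either your product form or the paper's sum form works equally well.

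The obstacle you flag is real under the literal definition of $k^{(n)}_\ell$, but it arises from a typo in the paper: the $\wedge$ should be $\vee$. The paper's own computation uses \emph{both} $k^{(n)}_\ell\ge n\ell$ (so that $\sum_\ell 1/(k^{(n)}_\ell)^2\le\pi^2/(6n^2)$, controlling the gradient term) \emph{and} $k^{(n)}_\ell\ge$ the $\exp(-1/(n\ell^2))$-quantile (so that $\pi([-k^{(n)}_\ell,k^{(n)}_\ell])\ge\exp(-1/(n\ell^2))$, controlling the $L^2$ term); these hold simultaneously only under $\vee$. Under the literal $\wedge$ the lemma is in fact false in general: for the standard Cauchy target $f(x)=1/(\pi(1+x^2))$, which satisfies all the paper's assumptions, the quantile is $\asymp n\ell^2$, so $k^{(n)}_\ell=n\ell$ for all $\ell\ge 2$, whence $\sum_\ell\pi(|X|>2k^{(n)}_\ell)\asymp\sum_\ell 1/(n\ell)=\infty$ and $\pi^{\otimes\infty}(\Reals^\infty\setminus K^{(n)})=1$ for every $n$. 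Since capacity dominates measure, $\Capacity(\Reals^\infty\setminus K^{(n)})\not\to 0$. With the corrected $\vee$ the ``cap active'' case never occurs, your bound $p^{(n)}_\ell\le 1-\exp(-1/(n\ell^2))\le 1/(n\ell^2)$ holds for every $\ell$, and your argument closes immediately.
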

\begin{proof}
For positive integers $i$ and $n$, we define the function $b^{(n)}_\ell:\Reals\rightarrow(\Reals^+\cup\{\infty\})$ piece-wise by 
\begin{equation}\label{eq:defi_l}
b^{(n)}_\ell(x_\ell)
\quad=\quad
\begin{cases}
0 & \text{ for }|x_\ell|<k^{(n)}_\ell\,,
\\
\frac{x_\ell-k^{(n)}_\ell}{2k^{(n)}_\ell-x_\ell} & \text{ for } k^{(n)}_\ell\leq|x_\ell|\leq 2k^{(n)}_\ell\,,
\\
\infty & \text{ for }  2k^{(n)}_\ell<|x_\ell|\,,
\end{cases}
\end{equation} 
and the function \(u^{(n)}\) (defined for \(x\in\Reals^\infty\)) by
\begin{equation}\label{eq:defi_u}
u^{(n)}(x)
\quad=\quad
\begin{cases}
\frac{\sum_{\ell=1}^\infty b^{(n)}_\ell(x_\ell)}{1+\sum_{\ell=1}^\infty b^{(n)}_\ell(x_\ell)} & \text{ for } \sum_{\ell=1}^\infty b^{(n)}_\ell(x_\ell)<\infty\,,
\\
1 & \text{ for } \sum_{\ell=1}^\infty b^{(n)}_\ell(x_\ell)=\infty\,.
\end{cases}
\end{equation} 
Note that if $x\in\Reals^\infty\setminus K^{(n)}$ then $b^{(n)}_\ell(x_\ell)=\infty$ for at least one $i$ in $\Numbers$ and therefore $u^{(n)}=1$ on $\Reals^\infty\setminus K^{(n)}$.
Consequently $\Capacity(\Reals^\infty\setminus K^{(n)})\leq \|u^{(n)}\|^2_{\Hilbert} + \Phi(u^{(n)})$.

So the lemma is proved if we can show that $\|u^{(n)}\|^2_{\Hilbert}\rightarrow0$ and $\Phi(u^{(n)})\rightarrow0$.

We begin by considering $\|u^{(n)}\|^2_{\Hilbert}$.
Since $0\leq u^{(n)}\leq 1$ and $u^{(n)}(x)=0$ for $x\in \CartesianProduct_{\ell=1}^\infty \left[-k^{(n)}_\ell,k^{(n)}_\ell\right]$, it is the case that as \(n\to\infty\) so
\begin{multline*}
\|u^{(n)}\|^2_{\Hilbert}
\;\leq\;
\int_{\Reals^\infty\setminus\CartesianProduct_{\ell=1}^\infty \left[-k^{(n)}_\ell,k^{(n)}_\ell\right]}\,\pi^{\otimes\infty}\left(dx\right)
\;=\;
1-\pi^{\otimes\infty}\left(\CartesianProduct_{\ell=1}^\infty \left[-k^{(n)}_\ell,k^{(n)}_\ell\right]\right)
\;=\;
1-\prod_{\ell=1}^\infty\pi\left(\left[-k^{(n)}_\ell,k^{(n)}_\ell\right]\right)
\\
\;\leq\;
1-\prod_{\ell=1}^\infty\exp\left(-\frac{1}{n\, \ell^2}\right)
\;=\;
1-\exp\left(-\frac{1}{n}\sum_{\ell=1}^\infty\frac{1}{\ell^2}\right)
\;=\;
1-\exp\left(-\frac{\pi^2}{6n}\right)
\;\longrightarrow\;
0\,.
\end{multline*}

We turn to consideration of $\Phi(u^{(n)})$.
From \eqref{eq:def_of_phi} we know $\Phi(u^{(n)})=\frac{\scale c(\scale)}{2}\sum_{\ell=1}^\infty\Expect{\left|\frac{\partial u^{(n)}}{\partial x_\ell}(\X)\right|^2}$.
From \eqref{eq:defi_l} and \eqref{eq:defi_u} it follows that if \(x\in\Reals^\infty\) then
\begin{equation}\label{eq:u_prime}
\frac{\partial u^{(n)}}{\partial x_\ell}(x)
\quad=\quad
\begin{cases}
0 
& \text{ for } |x_\ell|< k^{(n)}_\ell\text{ or }  |x_\ell|> 2k^{(n)}_\ell\,,
\\
\frac{\partial }{\partial x_\ell}\left(\frac{\sum_{j\neq \ell}^\infty b^{(n)}_j(x_j)+b^{(n)}_\ell(x_\ell)}{1+\sum_{j\neq \ell}^\infty b^{(n)}_j(x_j)+b^{(n)}_\ell(x_\ell)}\right)
& \text{ for }k^{(n)}_\ell\leq|x_\ell|\leq 2k^{(n)}_\ell\text{ and }\sum_{j\neq \ell}^\infty b^{(n)}_j(x_j)<\infty\,,
\\
0
& \text{ for } k^{(n)}_\ell\leq|x_\ell|\leq 2k^{(n)}_\ell\text{ and }\sum_{j\neq \ell}^\infty b^{(n)}_j(x_j)=\infty\,.
\end{cases}
\end{equation}
For $k^{(n)}_\ell\leq|x_\ell|\leq 2k^{(n)}_\ell$ and $\sum_{j\neq \ell}^\infty b^{(n)}_j(x_j)<\infty$ it is the case that
\begin{multline}\label{eq:ineq_u_prime}
\left|\frac{\partial }{\partial x_\ell}\left(\frac{\sum_{j\neq \ell}^\infty b^{(n)}_j(x_j)+b^{(n)}_\ell(x_\ell)}{1+\sum_{j\neq \ell}^\infty b^{(n)}_j(x_j)+b^{(n)}_\ell(x_\ell)}\right)\right|
\;=\;
\left|\frac{\frac{\partial }{\partial x_\ell}b^{(n)}_\ell(x_\ell)}{
\left(1+\sum_{j\neq \ell}^\infty b^{(n)}_j(x_j)+b^{(n)}_\ell(x_\ell)\right)^2}\right|
\;\leq\;
\left|\frac{\frac{\partial }{\partial x_\ell}b^{(n)}_\ell(x_\ell)}{
\left(1+b^{(n)}_\ell(x_\ell)\right)^2}\right|\\
\;=\;
\left|\frac{\partial }{\partial x_\ell}\left(\frac{b^{(n)}_\ell(x_\ell)}{1+b^{(n)}_\ell(x_\ell)}\right)\right|
\;=\;
\left|\frac{\partial }{\partial x_\ell}\left(\frac{\frac{x_\ell-k^{(n)}_\ell}{2k^{(n)}_\ell-x_\ell}}{1+\frac{x_\ell-k^{(n)}_\ell}{2k^{(n)}_\ell-x_\ell}}\right)\right|
\;=\;
\left|\frac{\partial }{\partial x_\ell}\left(\frac{x_\ell}{k^{(n)}_\ell}-1\right)\right|
\;=\;
\frac{1}{k^{(n)}_\ell}\;.
\end{multline}
From \eqref{eq:u_prime} and \eqref{eq:ineq_u_prime} it follows
that$\left|\frac{\partial u^{(n)}}{\partial x_\ell}(x)\right|\leq\frac{1}{k^{(n)}_\ell}$ for any $x$ in $\Reals^\infty$,
and thus
$\Expect{\left|\frac{\partial u^{(n)}}{\partial x_\ell}(\X)\right|^2}\leq\left(\frac{1}{k^{(n)}_\ell}\right)^2$.
Therefore 
$$
\sum_{\ell=1}^\infty\Expect{\left|\frac{\partial u^{(n)}}{\partial x_\ell}(\X)\right|^2}
\quad\leq\quad
\sum_{\ell=1}^\infty\left(\frac{1}{k^{(n)}_\ell}\right)^2
\;\leq\;
\sum_{\ell=1}^\infty\frac{1}{n^2\ell^2}
\;=\;
\sum_{\ell=1}^\infty\frac{1}{n^2\ell^2}
\;=\;
\frac{\pi^2}{6n^2}
\;\rightarrow\;
 0\,,
$$
and thus $\Phi(u^{(n)})\rightarrow 0$, which completes the proof.
\end{proof}


\bibliographystyle{chicago}
\bibliography{Optimal} 


\end{document}